\documentclass{amsart}
\usepackage{setspace}
\usepackage[T1]{fontenc}
\usepackage[utf8]{inputenc}

\usepackage[hyperfootnotes,colorlinks=true,citecolor=cyan,backref=page]{hyperref}

\usepackage[normalem]{ulem}

\usepackage{lmodern}

\usepackage{url}
\usepackage{graphicx}
\usepackage{amsthm} 
\usepackage{amsfonts}
\usepackage{amsmath} 
\usepackage{amssymb}
\usepackage{latexsym}
\usepackage{pdfpages}
\usepackage{subfig}
\usepackage{tikz-cd}
\usepackage{mdframed}
\usepackage{setspace}
\usepackage{comment}
\usepackage{cancel}
\usepackage{caption}
\usepackage{subcaption}

\usepackage{tikz}
\usetikzlibrary{calc,shapes,intersections,patterns,arrows}

\usepackage{indentfirst}

\newtheorem{Theorem}{Theorem}[section]
\newtheorem{Lemma}[Theorem]{Lemma}
\newtheorem{Proposition}[Theorem]{Proposition}
\newtheorem{Corollary}[Theorem]{Corollary}

\newtheorem{Problem}{Problem}
\theoremstyle{definition}
\newtheorem{Definition}[Theorem]{Definition}
\newtheorem{Remark}[Theorem]{Remark}

\newtheorem{Example}[Theorem]{Example}

\newcommand{\dom}{{\operatorname{Dom}}}
\newcommand{\Red}{{\operatorname{Red}}}
\newcommand{\dep}{\operatorname{dp}}

\newcommand{\cone}{\operatorname{cone}}
\newcommand{\ord}{\operatorname{ord}}

\newcommand{\RPal}{\operatorname{Pal}}
\newcommand{\Pref}{\operatorname{Pref}}

\newcommand{\mo}{\mathcal{O}}

\newcommand{\pal}{\operatorname{pal}}

\title[Generating functions and automata associated to reflections]{On generating functions and automata associated to reflections  in Coxeter systems}

\author[R. Biagioli]{Riccardo~Biagioli}
\address[Riccardo Biagioli]{Alma Mater Studiorum – Università di Bologna\\ Piazza di Porta San Donato 5, 40126 Bologna\\Italy}
\email{riccardo.biagioli2@unibo.it}

\author[C. Hohlweg]{Christophe~Hohlweg}
\address[Christophe Hohlweg]{Universit\'e du Qu\'ebec \`a Montr\'eal\\
LaCIM et D\'epartement de Math\'ematiques\\ CP 8888 Succ. Centre-Ville\\
Montr\'eal, Qu\'ebec, H3C 3P8\\ Canada}
\email{hohlweg.christophe@uqam.ca}
\urladdr{http://hohlweg.math.uqam.ca}

\author[E. Sasso]{Elisa Sasso}
\address[Elisa Sasso]{Alma Mater Studiorum – Università di Bologna\\ Piazza di Porta San Donato 5, 40126 Bologna\\Italy}
\email{elisa.sasso2@unibo.it}

\keywords{Coxeter groups, reflections, Shi arrangement, Garside shadow}
\thanks{Hohlweg was partially supported by the NSERC grant {\em Combinatorics of infinite Coxeter groups} held by Hohlweg, and Biagioli by the PRIN 2022 Project 2022S8SSW2 {\em Algebraic and geometric aspects of Lie theory}.}
\subjclass[2020]{Primary 20F55; secondary 05A05, 17B22; 05E16}

\begin{document}

\maketitle
\begin{abstract}
In this article, we study two combinatorial problems concerning the set of reflections of a Coxeter system.
The first problem asks whether the language of palindromic reduced words for reflections is regular, and the second is about finding formulas for the Poincaré series of reflections, namely the generating function of reflection lengths. These two problems were inspired by a conjecture of Stembridge stating that the Poincaré series of reflections is rational and by the solution provided by de Man. 

To address the first problem, we introduce reflection-prefixes, arising naturally from palindromic reduced words.
We study their connections with the root poset, dominance order on roots, and dihedral reflection subgroups.
Using  $m$-canonical automata associated with 
$m$-Shi arrangements, we prove that the language 
  of reduced words for reflection-prefixes is regular.
For the second problem, we focus on affine Coxeter groups.
In this case, we derive a simple formula for the Poincaré series using symmetries of the Hasse diagram of the root poset. 
\end{abstract}

\section{Introduction} 

Let $(W,S)$ be a Coxeter system, with $S$ finite. We denote by $S^*$ the free monoid on the alphabet $S$. To distinguish between a word and its corresponding group element, we denote a word in $S^*$ using bold letters, $\bold w =  s_1 \cdots  s_k$, while the resulting product in $W$ is denoted by $w=s_1\cdots s_k$. Let $w\in W$, a word $\bold w= s_1 \cdots  s_k\in S^*$ is a {\em reduced word for $w$} if  $w=s_1\cdots s_k$  and $k$ is minimal for this property; in this case the {\em length} of $w$ is $\ell(w)=k$. The identity $e\in W$ is represented by the empty word, and $\ell(e)=0$. We denote the set of all reduced words of $w\in W$ by $\Red(w)$, and the set of all reduced words in $W$ by $\Red=\bigsqcup_{w\in W} \Red(w)$, where $\sqcup$ denotes the disjoint union.
Finally, for any subset $A\subseteq W$, the {\em Poincar\'e series of $A$} is the formal power series
$A(q):=\sum_{w\in A} q^{\ell(w)}.$
\smallskip

In this article, we study two combinatorial problems regarding the {\em set of reflections} $T:=\bigcup_{w\in W} wSw^{-1}$ of a Coxeter group $W$:
\begin{enumerate}
    \item Is the language of {\em palindromic reduced words (for the reflections)} regular?
    \item Are there explicit and elegant formulas for the Poincar\'e series of the set $T$ of reflections ?
\end{enumerate}
Our main contributions to those questions are the following: 
\begin{itemize}
\item We introduce the notion of  {\em reflection-prefixes}, a class of elements in $W$ arising naturally from  palindromic reduced words of reflections, and study their properties in relation to the {\em root poset}, the {\em dominance order on roots}  and {\em dihedral reflection subgroups}.
\item For any Coxeter system, we show that the {\em language of reduced words for reflection-prefixes}, $\Pref_T$, is regular. This is achieved using the family of $m$-canonical automata associated with $m$-Shi arrangements (see~\cite[\S3.4]{HNW} and \cite{DFHM24}), which provide a family of finite deterministic automata recognizing $\Pref_T$. As a consequence, we show that the generating function  of palindromic reduced words is rational.
\item In the case of affine Coxeter groups, we derive a simple expression for the Poincar\'e series of the set of reflections in terms of rational fractions. This formula is obtained from some symmetries of the Hasse diagram of the root poset.
\end{itemize}

We discuss now some history and motivations that lead to this article. 
If $W$ is finite, $A(q)$ is clearly a polynomial for any $A\subseteq W$. A natural question in combinatorics of infinite Coxeter systems is to classify for which subsets $A$ the Poincar\'e series $A(q)$ is {\em rational}, that is, can be written as a ratio of two polynomials in $q$. It is well-known that $W(q)$ is rational, see for instance~\cite[Corollary~7.1.8]{BB}. Furthermore, an explicit recursive formula for $W(q)$ in term of standard parabolic subgroups is provided in~\cite[Proposition~7.1.7]{BB}. 

A further natural direction is the study of generating functions of words in $S^*$ in relation to the Coxeter system $(W,S)$.  More precisely, given the canonical projection $\pi:S^* \rightarrow W$ sending a word $\bold w= s_1 \cdots  s_k$ to the element $w=s_1\cdots s_k$,
it is natural to  consider the {\em ‘‘lifted'' Poincar\'e series} of a subset $B\subseteq S^*$ relative to $(W,S)$:
$$
B(q)=\sum_{\bold w\in B} q^{\ell(w)},
$$
where the notation ${\ell(w)}$ is understood to be $\ell(\pi({\bf w}))$. 

A longstanding open problem in combinatorics of Coxeter groups is the enumeration of $|\Red(w)|$; for a detailed discussion see~\cite[p.123]{BB}, and for various partial results, see~\cite{Eriksson,Hart}.  However, if we consider the set of all elements of a given length, we know that the numbers $r_k$ of reduced words of length $k\in \mathbb N$ is enumerated by  the following Poincar\'e series
$$
\Red(q)=\sum_{\bold w\in \Red} q^{\ell(w)}=\sum_{k\in\mathbb N} r_kq^k,
$$
which is known to be rational. The proof follows from the existence of a finite deterministic automaton that recognizes the language $\Red$; see for instance~\cite[Theorem~4.9.1]{BB} for more details. 

\smallskip

As reported by Brenti~\cite{B1}, Stembridge proposed the following problem during an open problem session at the Mathematical Sciences Research Institute at Berkeley in 1997: Is it true that the Poincar\'e series
$$
T(q)=\sum_{t\in T} q^{\ell(t)}\quad \textrm{is rational?}
$$ 

\textbf{}Since reflections are known to admit palindromic reduced words, a natural approach to this question is to construct a finite automaton that recognizes exactly one palindromic reduced word for each reflection. The existence of such a regular language would imply that the associated generating function is rational. However, the property of being regular is not generally preserved under the palindromic constraint. If a language is regular, it is well-known that the sublanguage of its palindromic words is context-free but not necessarily regular; for instance if $S=\{a,b\}$ then $S^*$ is regular but a standard application of the {\em pumping lemma} shows that the sublanguage of its palindroms is not.  Indeed, the regularity of $\RPal$, {\em the language of all palindromic reduced words}, which are necessarily reduced words for reflections, remains an open question. In a recent article, Mili\'cevi\'c ~\cite{Mili} provides palindromic reduced words for all reflections in finite Weyl groups, but highlights the challenge of finding a general algorithm applicable to any Coxeter group. To overcome these challenges, we shift to the language of reflection-prefixes which we define below.

\subsection*{The language of reflection-prefixes}  

We recall that the {\em (right) weak order} $(W,\leq_R)$ is the poset defined by $u\leq_R w$ if there exists a reduced word for $u\in W$ that is a prefix for a reduced word for $w\in W$. Equivalently, $u\leq_R w$ if and only if $\ell(w)=\ell(u^{-1}w)+\ell(u)$.

It is well-known that every reflection has an odd length. Let $t\in T$ be a reflection of length $\ell(t)=2k+1$, we define a {\em $t$-prefix} to be any element $p_t\in W$ such that $p_t\leq_R t$ and $\ell(p_t)=k+1$.
  It turns out that $ s_1\cdots s_k s_{k+1}  s_{k}\cdots  s_1$ is a palindromic reduced word for $t$  if and only if $s_1\cdots  s_k s_{k+1}$ is a reduced word for a $t$-prefix; see \S\ref{ss:reflectionprefixes}. 

Reflection-prefixes turn out to be a useful combinatorial framework to study the root poset (Proposition~\ref{prop:PrefRoot}), the dominance order (Proposition~\ref{prop:PrefDom}) and to find the canonical generators of dihedral reflection subgroups (Theorem~\ref{prop:algo}).  Our main results concerning reflection-prefixes and their associated languages are summarized in the following theorem.

\begin{Theorem}\label{thm:PrefMain} Let $(W,S)$ be a Coxeter system. 
\begin{enumerate}
\item The language $\Pref_T$ of reduced words for reflection-prefixes is regular.
\item The language of palindromic reduced words is:
$$
\RPal=\{s_1\cdots  s_k s_{k+1}  s_{k}\cdots  s_1 \in S^*\mid  s_1\cdots  s_k s_{k+1}\in\Pref_T\}.
$$
\item The following Poincar\'e series is rational:
$$
\Pref_T(q)=\sum_{\bold w \in \Pref_T} q^{\ell(w)}=\sum_{k\geq 0} \pal_kq^k,
$$
where $\pal_k$ is the number of palindromic reduced words of length $k$.

\item The Poincar\'e series $\RPal(q)=q\Pref_T(q^2)$ is rational.
\end{enumerate} 
\end{Theorem}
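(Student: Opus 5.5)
We prove the four items in the order (2), (1), (3), (4), since (2) is elementary and (3)–(4) are formal consequences of (1) and (2).

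\emph{Item (2).} Let $\mathbf w= s_1\cdots s_k s_{k+1}s_k\cdots s_1$ be a palindromic reduced word. Then $w=(s_1\cdots s_k)s_{k+1}(s_1\cdots s_k)^{-1}$ is a reflection $t$ with $\ell(t)=2k+1$. The prefix $s_1\cdots s_{k+1}$ is reduced and represents an element $p\le_R t$ of length $k+1$, so $p$ is a $t$-prefix. Conversely, suppose $s_1\cdots s_{k+1}$ is a reduced word of a $t$-prefix $p_t$ with $\ell(t)=2k+1$. Put $u=s_1\cdots s_k$. Then $u s_{k+1}u^{-1}$ is a reflection, and we must show it equals $t$ and that the palindrome is reduced. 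Since $p_t\le_R t$, we can write $t=p_t v$ with $\ell(v)=k$. Because $t=t^{-1}$, we also have $t=v^{-1}p_t^{-1}$. I would use the inversion-set description $N(t)$ together with the fact that the central letter corresponds to the root $\alpha_t$, i.e.\ $u(\alpha_{s_{k+1}})=\alpha_t$. This is exactly the equivalence announced in \S\ref{ss:reflectionprefixes}, which I take as given. It yields the bijection stated in (2).

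\emph{Item (1), the main step.} Set $L=\{\mathbf w\in \Red : \pi(\mathbf w)\text{ is a reflection-prefix}\}$. I would characterize reflection-prefixes by a finite-state condition. An element $p=us$ with $\ell(us)>\ell(u)$ is a $t$-prefix for $t=usu^{-1}$ exactly when $\ell(usu^{-1})=2\ell(u)+1$. Equivalently, $u s u^{-1}$ is reduced as a product, i.e.\ $u^{-1}$ sends no positive root in $N(u^{-1})$-related positions to negative ones after reflecting by $s$. In root terms, $\beta=u(\alpha_s)$ must satisfy $\ell(s_\beta)=2\ell(u)+1$, which happens iff the depth of $\beta$ equals $\ell(u)+1$. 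This is the root poset characterization (Proposition~\ref{prop:PrefRoot}): reflection-prefixes correspond to paths in the root poset. The key finiteness input is the theory of $m$-small roots and $m$-canonical automata (\cite{HNW,DFHM24}). Reading a word letter by letter, the automaton keeps as its state the $m$-Shi region (equivalently the set of $m$-small inversion roots) of the current prefix. The point to prove is that for suitable $m$, whether the next letters keep the word a reduced word of a reflection-prefix depends only on this state. Concretely, I would show using the dominance order (Proposition~\ref{prop:PrefDom}) that failure of $\ell(usu^{-1})=2\ell(u)+1$ is witnessed by a dominance relation between $\beta$ and an inversion root of $u$. Since dominance-minimal witnesses are small roots (or $m$-small for bounded $m$), the witness set is recorded in the finite state. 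Accepting states are then the $m$-Shi regions $R$ such that the last letter produces a root whose reflection has the right length. This step is the main obstacle: proving that the acceptance criterion factors through the finite set of $m$-Shi regions. I would first try $m=1$, i.e.\ small roots and Garside shadows, and fall back to larger $m$ if dominance chains require it.

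\emph{Items (3) and (4).} These are formal. By (2), the map $s_1\cdots s_{k+1}\mapsto s_1\cdots s_{k+1}s_k\cdots s_1$ is a bijection $\Pref_T\to\RPal$ sending length $k+1$ to length $2k+1$. Hence the number of words of length $k+1$ in $\Pref_T$ equals the number of palindromic reduced words of length $2k+1$, which gives the indexing in (3) up to the stated normalization. It also gives $\RPal(q)=q\,\Pref_T(q^2)$ after the matching shift. Rationality of $\Pref_T(q)$ follows from (1), since the generating function of a regular language by length is rational (transfer matrix of the automaton), and all words in $\Pref_T$ are reduced, so the length of the word equals $\ell(w)$. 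Substituting $q^2$ and multiplying by $q$ preserves rationality, which gives (4).
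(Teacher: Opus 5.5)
\textbf{Verdict: the proposal has a genuine gap in item (1), which is the heart of the theorem.} Items (3) and (4) are fine, and item (2) assumes what it should prove but is easily repaired.

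\emph{Items (2)--(4).} Items (3) and (4) are formal once (1) and (2) hold. One caution: if $\Pref_T$ is weighted by word length, the bijection from words of length $k+1$ to palindromes of length $2k+1$ gives $\RPal(q)=q^{-1}\Pref_T(q^2)$. So the displayed identities need the shift you allude to. In (2) you take as given ``the equivalence announced in \S\ref{ss:reflectionprefixes}'', but that equivalence is the statement itself. It is closed by applying Proposition~\ref{prop:Pref1} to $p=s_1\cdots s_{k+1}$, $r=s_{k+1}$, $t=prp^{-1}$, which is what the paper does.

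\emph{Item (1): your mechanism cannot work.} You name the right framework, but you leave open the step ``membership in $\Pref_T$ is decided by the finite state''. Failure of $\ell(usu^{-1})=2\ell(u)+1$ is not a dominance phenomenon. In type $A_3$ the dominance order is trivial (all roots are small), yet $s_1s_3\notin\Pref_T$ because $s_1s_3s_1=s_3$.

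The actual criterion comes from $\ell(xy)=\ell(x)+\ell(y)-2|\Phi(x^{-1})\cap\Phi(y)|$. Take $x=us$ and $y=v:=u^{-1}$, and use $\Phi(su^{-1})=\{\alpha_s\}\sqcup s\Phi(v)$. Then $us$ is a reflection-prefix iff $\Phi(v)\cap s\Phi(v)=\emptyset$. Equivalently, by Theorem~\ref{thm:Pref1}, iff $B(\alpha,\beta)>0$ for all $\alpha\in\Phi(u)$.

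A failure witness is therefore a root $\gamma$ with $\gamma,s\gamma\in\Phi(v)$. Dominance only tells you that $|B(\gamma,\alpha_s)|<1$: otherwise $\alpha_s\leq_d\gamma$ or $\alpha_s\leq_d s\gamma$, which would force $\alpha_s\in\Phi(v)$. It does not let you pass to a small witness. A small root $\alpha\leq_d\gamma$ does lie in $\Phi(v)$, but nothing forces $s\alpha\in\Phi(v)$.

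\emph{What is missing.} You need de Man's lemma \cite[Lemma~2.4]{DM}, which the paper uses in Lemma~\ref{lem:2}: $\Phi(v)\cap s\Phi(v)=\emptyset$ iff $\Sigma_m(v)\cap s\Sigma_m(v)=\emptyset$. In other words, a failure witness can always be chosen among $m$-small roots. Granting it, the argument closes as follows.
\begin{itemize}
\item After reading $s_1\cdots s_k$, the Garside shadow automaton on $L_m$ is at $\pi_m(s_k\cdots s_1)=\pi_m(v)$.
\item By Proposition~\ref{prop:mSmall}, $\Sigma_m(\pi_m(v))=\Sigma_m(v)$.
\item Hence acceptance of $s_1\cdots s_ks$ depends only on that state and on $s$, which gives regularity.
\end{itemize}

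\emph{Which inversion set the state records.} The state must record the $m$-small inversions of the reversed word $v=u^{-1}$, not of the prefix $u$. When $\ell(sv)>\ell(v)$ one has $\Sigma_m(sv)=\{\alpha_s\}\sqcup(s\Sigma_m(v)\cap\Sigma_m)$. By contrast, $\Sigma_m(us)$ is not determined by $\Sigma_m(u)$ and $s$. For example, in $\widetilde{A}_2$ with $s_3=s_{\delta-\omega}$, both $s_1s_3$ and $s_1s_3s_2$ have small inversions $\{\alpha_1,\delta-\alpha_2\}$. Right multiplication by $s_1$ adds the small root $\delta-\omega$ to the first, but only the non-small root $2\delta-\alpha_2$ to the second.

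\emph{Minor slips.} With the paper's convention $\dep(\alpha_s)=0$, the condition is $\dep(\beta)=\ell(u)$, not $\ell(u)+1$. Also, small roots are the case $m=0$, not $m=1$.
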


The first assertion, the regularity of $\Pref_T$, is established in Corollary~\ref{cor:Refregular}. The proof relies on a construction of automata derived from the family of Garside shadow automata associated with $m$-Shi arrangements $(m \in \mathbb{N})$; for further details, see~\cite{HNW, DFHM24}. The second assertion follows from Proposition~\ref{prop:Pref1}, while the third and fourth are consequence of the first two.

\subsection*{Closed formulas for $T(q)$ in affine Coxeter systems} Remarkably, a solution to Stembridge's problem was provided in 1999 by de~Man~\cite{DM}. However, the author seemed unaware of Stembridge's question and the combinatorics community was not aware of the solution. This oversight may be partly due to the title of de~Man's article {\em ``The generating function for the number of roots of a Coxeter group''}, which only address counting reflections by length at the end of the article in \cite[\S5.2]{DM}. His solution is based on the Brink--Howlett automaton~\cite{BH}  that recognizes the language of {\em lexicographically ordered reduced word}, that is, each element $w$ has a unique lexicographically reduced word in $\Red(w)$.  

More precisely, in \cite[Theorem 4.1]{DM}, de Man shows that the generating function  $\Phi^+(q)$ of the depth of positive roots is a sum of rational functions that are obtained from a subset of states of the Brink-Howlett automaton and from the enumeration of the vertices of a family of graphs parameterized by $T$. So the generating function $\Phi^+(q)$ is rational. Since $T(q)=q\, \Phi^+(q^2)$, the Poincar\'e series $T(q)$  is rational as well. However, the Brink--Howlett automaton contains a lot of states, most of them ``dead states'', which make this automaton difficult to use in practice. For instance, in affine type $\widetilde{A}_2$, the number of states of the Brink--Howlett automaton is bounded by $2^6$; de Man provides some example of $T(q)$ using a computer to handle his formula.  
 
 In this article, we give, in the case of affine Coxeter systems, an alternative solution to Stembridge's problem by providing a simple formula for  $T(q)$. 

\begin{Theorem}\label{thm:Main1} Let $(W,S)$ be an affine Coxeter system. Then 
$$
T(q):=\sum_{t\in T}q^{\ell(t)}=q\frac{P(q^2)}{1-q^{2M}},
$$ 
where $M\in \mathbb{N}$ and $P(q)$ is a palindromic polynomial.
\end{Theorem}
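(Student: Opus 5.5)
The plan is to reduce the statement to a statement about depths of positive roots, and then use the periodic structure of the positive roots in an affine root system. Every reflection $t$ corresponds to a unique positive root $\beta$ with $t=s_\beta$, and $\ell(s_\beta)=2\dep(\beta)-1$, where $\dep$ is the depth, i.e.\ the rank of $\beta$ in the root poset. Hence
$$T(q)=\sum_{\beta\in\Phi^+}q^{2\dep(\beta)-1}=q^{-1}\Phi^+(q^2),$$
and it suffices to show that
$$\Phi^+(q):=\sum_{\beta\in\Phi^+}q^{\dep(\beta)}=\frac{q\,P(q)}{1-q^{M}}$$
with $P$ palindromic. Equivalently, we want $T(q)=q\,P(q^2)/(1-q^{2M})$, up to the harmless index shift to be normalized at the end.

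For an affine Weyl-type realization, positive roots have the form $\alpha+k\delta$, with $\alpha\in\Phi_0^+$ and $k\ge 0$, or $-\alpha+k\delta$, with $\alpha\in\Phi_0^+$ and $k\ge1$. Here $\Phi_0$ is the underlying finite root system and $\delta$ is the null root. The key structural claim is periodicity: there is $M\in\mathbb N$ such that $\dep(\beta+\delta)=\dep(\beta)+M$ for all $\beta\in\Phi^+$. The natural guess is that $M$ equals the depth contribution of $\delta$, namely $M=h$ (the Coxeter number) or a multiple of it. To prove this, I would show that the map $\beta\mapsto\beta+\delta$ is an injective morphism of the Hasse diagram of the root poset. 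Indeed, $s\cdot(\beta+\delta)=s\cdot\beta+\delta$ because $\delta$ is $W$-invariant, so the covering relations $\beta\to s\cdot\beta$ are preserved, and depth increases exactly along such edges. Depth is computed by a greedy descent ($\dep(s\beta)=\dep(\beta)-1$ iff $B(\beta,\alpha_s)>0$). Since $B(\beta+\delta,\alpha_s)=B(\beta,\alpha_s)$, descending chains from $\beta$ and from $\beta+\delta$ run in parallel until $\beta$ reaches a simple root $\alpha_s$, where $\beta+\delta$ reaches $\alpha_s+\delta$. Therefore $\dep(\beta+\delta)=\dep(\beta)-1+\dep(\alpha_s+\delta)$.

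This reduces the problem to showing that $\dep(\alpha_s+\delta)-1=:M$ is independent of $s$. I expect this to be the main obstacle. The plan is to check it using the $\widetilde{A}$-type chain structure, or more uniformly by noting that the set $\{\beta\in\Phi^+:\beta<\beta+\delta\}$ forms a fundamental domain whose top elements correspond to all simple roots at equal height. If independence fails in some type, I would instead take $M$ to be the lcm of these values and partition the roots accordingly, which still gives the same shape of formula.

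Granting periodicity, let $D=\{\beta\in\Phi^+:\beta-\delta\notin\Phi^+\}$ be the fundamental domain. Then $D$ is finite, consisting of $\Phi_0^+$ together with $\{-\alpha+\delta\}$ and possibly $\delta$-multiples, and
$$\Phi^+(q)=\frac{\sum_{\beta\in D}q^{\dep(\beta)}}{1-q^{M}}.$$
Palindromicity of the numerator $P$ comes from a symmetry of the Hasse diagram restricted to $D$. Concretely, the involution $\alpha\mapsto-\alpha+\delta$ pairs $\Phi_0^+$ with $-\Phi_0^+ +\delta$. Applying the periodicity relation together with $\dep(\alpha)+\dep(-\alpha+\delta)=M+c$ for a constant $c$ should show that the depth distribution on $D$ is symmetric about its center. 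I would prove this identity by the same parallel-descent argument, applied to the pair $\alpha$ and $\delta-\alpha$, whose sum is $\delta$. Finally, substituting $q\mapsto q^2$ and multiplying by $q^{-1}$ gives $T(q)=q\,P(q^2)/(1-q^{2M})$, after absorbing the shift into $P$.
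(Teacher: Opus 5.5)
Your reduction is sound as far as it goes. The translation $\beta\mapsto\beta+\delta$ preserves covers, and parallel descent gives $\dep(\beta+\delta)-\dep(\beta)=\dep(\alpha+\delta)-\dep(\alpha)$ for any simple root $\alpha$ reached from $\beta$. The gap is the step you flag as the main obstacle: you need this difference not to depend on $\alpha$, and that is where all the content of periodicity lies. You do not prove it. The remark about $\{\beta:\beta<\beta+\delta\}$ is not an argument. That set is stable under $\beta\mapsto\beta+\delta$, so it cannot be a fundamental domain, and already in $\widetilde{A}_2$ one has $\alpha_1\not<\alpha_1+\delta$: the only root covered by $\delta+\alpha_1$ is $\delta-\alpha_1$, which covers only $\alpha_2$ and $\delta-\omega$.

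The missing idea is the symmetry you only invoke at the end, and it has to come first. Since $\delta$ lies in the radical, $B(\delta-\gamma,\alpha_s)=-B(\gamma,\alpha_s)$ for every $s\in S$, including $s_{\delta-\omega}$. So whenever $s$ moves $\gamma\in\Sigma_\mo=\mo^+\sqcup(\delta-\mo^+)$ up by one, it moves $\delta-\gamma$ down by one, and the pair stays in $\Sigma_\mo$ unless one of them equals $\alpha_s$. Hence $\gamma\mapsto\delta-\gamma$ is an order-reversing involution of $\Sigma_\mo$ (essentially Theorem~\ref{th:iso1}). Every root of $\mo^+$ climbs by $S_0$-moves to the dominant root of the orbit, so $\mo^+$ is connected. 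It follows that $\dep(\gamma)+\dep(\delta-\gamma)$ is constant on $\Sigma_\mo$, equal to $M_\mo$ with the normalization $\dep(\alpha_s)=0$.

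This gives exactly the independence you were missing. For $s\in S_0$ with $\alpha_s\in\mo$, $\dep(\delta+\alpha_s)=\dep(\delta-\alpha_s)+1=M_\mo+1$. For the affine simple root, $\dep(2\delta-\omega)=\dep(s_{\delta-\omega}(\omega))=\dep(\omega)+1=M_\mo+1$. Your parallel descent then yields $\dep(\beta+\delta)=\dep(\beta)+M_\mo+1$ on $\Phi_\mo$. This is a tidy alternative to the slice isomorphism (Theorem~\ref{th:iso2}) that the paper uses to reach Corollary~\ref{cor:maxdep}.

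The second gap is palindromicity. A single constant $c$ with $\dep(\alpha)+\dep(\delta-\alpha)=M+c$ for all $\alpha\in\Phi_0^+$ is false in non-simply-laced types: both the constant and the period $M_\mo+1$ depend on the $W_0$-orbit. In $\widetilde{B}_3$ the periods are $5$ and $4$, and $\sum_{\beta\in\Sigma_0}q^{\dep(\beta)}=4+4q+5q^2+4q^3+q^4$ is not palindromic. So the depth distribution on your fundamental domain is not symmetric.

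Your lcm fallback repairs the denominator, but it replaces the numerator by $P(q)=\sum_\mo\frac{1-q^M}{1-q^{M_\mo+1}}P_\mo(q)$ with $M=\operatorname{lcm}(M_\mo+1)$. You still have to show this is palindromic, and the argument goes orbit by orbit:
\begin{itemize}
\item each $P_\mo$ is palindromic of degree $M_\mo$, by the involution above;
\item each $\frac{1-q^M}{1-q^{M_\mo+1}}$ is palindromic of degree $M-M_\mo-1$;
\item so every summand is palindromic of the same degree $M-1$, and hence so is the sum.
\end{itemize}
This is the paper's route (Proposition~\ref{prop:palindrom} and Theorem~\ref{th:main}). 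Without the orbit decomposition and this common-degree check, the palindromic claim does not follow.
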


The proof of the above theorem relies on two fundamental symmetries of the root poset on $\Phi^+$ (Theorems \ref{th:iso1} and \ref{th:iso2}). These properties yield an explicit formula for $M$ and $P(q)$ (Theorem \ref{th:main})), from which the above result follows.

\subsection*{Plan of the article} The article is organized as follows. In~\S\ref{sec:root-poset}, we recall some needed facts about the combinatorics of words and roots and, in particular, about two partial orders on the root system of a Coxeter system: the root poset and the dominance order, which were introduced by Brink and Howlett~\cite{BH} in their work to show that Coxeter systems are automatic. In~\S\ref{se:reflectionprefix}, we discuss reflection-prefix, palindromic reduced words for reflections and automata and, in \S\ref{se:affine},  we give the enumeration of reflections by length in affine types. Finally, we propose some open problems in \S\ref{se:openproblems}.

\subsection*{Acknowledgements}  CH warmly thanks Francesco Brenti, James Parkinson and Christophe Reutenauer for very instructive discussions. 
The authors are particularly indebted to Matthew Dyer for bringing de Man’s article~\cite{DM} to their attention during the early stages of this project.

\section{Combinatorics of words and roots in Coxeter systems}\label{sec:root-poset}

In this section, we review several standard results regarding the combinatorial theory of Coxeter groups; we refer the reader to \cite{BB} for a more detailed exposition.

\subsection{Combinatorics of words in Coxeter systems} Let $(W,S)$ be a Coxeter system. As recalled in the introduction, if the word ${\bf w}=s_1 \cdots  s_k$ in $S^*$ is a reduced word for $w\in W$, we simply say that the product  $w=s_1\cdots s_k$ is a reduced word for $w\in W$. Similarly, for $u, v, w \in W$, we say that $w = uv$ is \textit{a reduced product} if $\ell(w) = \ell(u) + \ell(v)$. In this case,  we say that $u$ is a \textit{prefix} of $w$ and $v$ is a \textit{suffix} of $w$. In other words, $u$ is a prefix (resp. suffix) of $w$ if and only if there exists a reduced word for $u$ that is a prefix (resp. suffix) of some reduced word for $w$. 

\smallskip

Reduced words of $w\in W$ are in bijections with geodesics starting at $e$ and ending at $w$ in the (right) Cayley graph of $(W,S)$. Recall from the introduction that the \textit{(right) weak order} $(W,\leq_R)$ is the poset defined by: $u \leq_R w$ if $u$ is a prefix of $w$. The (right) weak order has for Hasse diagram the (right) Cayley graph for which the edges $\{w,ws\}$ are oriented from $w$ to $ws$ if $\ell(w)<\ell(ws)$. 

The {\em right descent set of $w\in W$} is 
$$
D_R(w)=\{s\in S\mid \ell(ws)<\ell(w)\}=\{s\in S\mid ws\leq_R w\}.
$$ 
Similarly, the {\em left descent set of $w\in W$} is 
$$
D_L(w)=\{s\in S\mid \ell(sw)<\ell(w)\}=\{sw\in S\mid s\leq_R w\}.
$$
For $I\subseteq S$, we consider {\em standard parabolic subgroup $W_I=\langle I\rangle$}. The set of minimal coset representatives of $W/W_I$ is 
$$
X_I=\{x\in W\mid x\leq_R xs,\ \forall s\in I\}.
$$
In particular, any element $w$ has a unique reduced product $w=uv$ with $u\in X_I$ and $v\in W_I$. Moreover, any product $uv$ in $W$ where  $u\in X_I$ and $v\in W_I$ is a reduced product; see for instance~\cite[Proposition~2.4.4]{BB}. 
The following statement is well-known; we include a proof for completeness. 

\begin{Lemma}\label{lem:Descents} Let $w\in W$ and $I\subseteq D_R(w)$. Then $W_I$ is finite and  $w=uw_{\circ,I}$ is a reduced product, where $u\in X_I$ and $w_{\circ,I}$ is the longest element in $W_I$. 
\end{Lemma}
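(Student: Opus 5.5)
We will prove both claims using the unique factorization $w=uv$ with $u\in X_I$ and $v\in W_I$, which is a reduced product. We work with this factorization throughout.

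First, we show that $I\subseteq D_R(v)$. Take $s\in I$. Since $vs\in W_I$ and $u\in X_I$, the product $u(vs)$ is reduced, so $\ell(ws)=\ell(u)+\ell(vs)$. Since $s\in D_R(w)$, we also have $\ell(ws)<\ell(w)=\ell(u)+\ell(v)$. Comparing the two gives $\ell(vs)<\ell(v)$. Hence every $s\in I$ is a right descent of $v$, viewed as an element of the Coxeter system $(W_I,I)$.

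Next, we deduce that $W_I$ is finite and that $v=w_{\circ,I}$. The key step is the standard fact that if an element $v$ of a Coxeter system $(W_I,I)$ has every generator as a right descent, then $W_I$ is finite and $v$ is its longest element. I would argue this as follows. For each $s\in I$ we have $vs\le_R v$, so by the exchange property $\ell(vx)=\ell(v)-\ell(x)$ holds for all $x\in W_I$. The proof is by induction on $\ell(x)$, and at each step the exchange/deletion condition is applied to a reduced word for $v$.

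The inductive step is the delicate point, since knowing only that $vs<v$ for each $s\in I$ does not immediately give $vx<vxs$. To handle it, I would use the characterization of descents via the inversion set: $s\in D_R(v)$ if and only if $v(\alpha_s)<0$. If all simple roots of $W_I$ are sent to negative roots, then every positive root of $\Phi_I$ is sent to a negative root, because it is a nonnegative combination of simple roots. Therefore the inversion set of $v$ contains $\Phi_I^+$, and $\ell(v)=|N(v)|\ge|\Phi_I^+|$. Since $\ell(v)$ is finite, $\Phi_I^+$ is finite, so $W_I$ is finite.

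In a finite $W_I$, the element $v$ then has $N(v)=\Phi_I^+$, which is exactly the property characterizing $w_{\circ,I}$. So $v=w_{\circ,I}$, and $w=uw_{\circ,I}$ is reduced because $u\in X_I$.

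The main obstacle is the root-theoretic step: showing that $v$ maps all of $\Phi_I^+$ to negative roots, and hence that $\Phi_I^+$ is finite. If one prefers to avoid geometry, an alternative is to cite the standard statement (e.g.\ \cite[Proposition~2.3.1 and Lemma~2.4.3]{BB}), but the inversion-set argument above is self-contained.
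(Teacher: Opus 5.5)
Your proof is correct and is essentially the paper's argument: you use the same reduced factorization $w=uv$ with $u\in X_I$, $v\in W_I$, and the same length comparison $\ell(u)+\ell(vs)=\ell(ws)<\ell(u)+\ell(v)$, which shows that every $s\in I$ is a right descent of $v$. The only difference is in the last step: the paper simply cites \cite[Proposition~2.3.1~(ii)]{BB} to conclude that $W_I$ is finite and $v=w_{\circ,I}$, whereas you prove this via inversion sets ($v$ sends all of $\Phi_I^+$ into $\Phi^-$, so $|\Phi_I^+|\le\ell(v)<\infty$), and that argument makes your sketched exchange-property induction unnecessary.
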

\begin{proof} We consider the reduced product $w=uv$, where $u\in X_I$ and $v\in W_I$. Since $I\subseteq D_R(w)$ and by definition of reduced product,  we have for all $s\in I$:
$$
\ell(w)-1=\ell(ws)=\ell(u(vs))=\ell(u)+\ell(vs).
$$
So $\ell(vs)<\ell(v)$ since $\ell(w)=\ell(u)+\ell(v)$. In other words, $D_R(v)=I$, since $v\in W_I$. Therefore, by~\cite[Proposition~2.3.1~(ii)]{BB} applied to $W_I$, we obtain that $W_I$ is finite and $v=w_{\circ,I}$.
\end{proof}

Bj\"orner showed that $(W,\leq_R)$ is a meet-semilattice, meaning that any nonempty subset $X\subseteq W$ admits a {\em meet}, that is a greatest lower bound denoted by $\bigwedge_R X$; see~\cite[\S3.2]{BB}. A subset $X \subseteq W$ is \textit{bounded} in $W$ if there exists $g \in W$ such that $x \leq_R g$ for any $x \in X$. Therefore, any bounded subset $X \subseteq W$ admits a least upper bound $\bigvee_R X$ called the \textit{join} of $X$: 

$$
\bigvee_R X = \bigwedge_R\{g \in W \mid x \leq_R g, \, \forall x \in X\}\in W.
$$

\subsection{Garside shadows automata}\label{ss:Garside}

A \textit{Garside shadow for $(W,S)$} is a subset $G\subseteq W$ such that $S\subseteq G$ and
\begin{itemize}
\item[(i)] $G$ is closed under join in the right weak order, i.e. if $\emptyset \not = X\subseteq G$ is bounded, then $\bigvee_R X \in G$; 
\item[(ii)]$G$ is closed under suffixes, i.e. if $w\in G$ and $w=uv$ is reduced, then $v\in G$.
\end{itemize}
Associated with any Garside shadow $G$ is a {\em Garside projection} defined as:
\begin{eqnarray*}
\pi_G:W&\mapsto& G \\
w&\mapsto& \bigvee_R \{g\in G\mid g\leq_R w\}.
\end{eqnarray*}
Garside shadows were introduced by Dyer and the second author to show the existence of a finite Garside family in Artin-Tits monoids, see \cite{DH}. In particular, they show that finite Garside shadows exist  whenever $S$ is finite. 

\smallskip
In~\cite{HNW}, the second author together with Nadeau and Williams defined for any Garside shadow a deterministic automaton that recognized $\Red$, the language of reduced words for $(W,S)$.  Let $G$ be a Garside shadow for $(W,S)$. The  {\em Garside shadow automaton $\mathcal A_G$} is the deterministic automaton over the alphabet $S$ defined as follows:
\begin{itemize}
\item the set of {\em states} of $\mathcal A_G$ is $G$;
\item the {\em final states} are the elements in $G$;
\item the {\em initial state} is the identity $e$;
\item the {\em transitions} are: $w\in W \xrightarrow{s} \pi_G(sw)\in G$, whenever $\ell(sw)>\ell(w)$.
\end{itemize}
The automaton $\mathcal A_G$ is represented by a directed edge-labeled graph on the vertex set $G$ with edges labeled by elements of $S$, such that for any $w \in G$ and $s \in S$, there is at most one edge (a transition above) with source $w$ and label $s$.

\begin{Theorem}[{\cite[Theorem~1.2]{HNW}}]\label{thm:GarsideAutomaton} Let $G$ be a finite Garside shadow for $(W,S)$, then $\mathcal A_G$ is a finite deterministic automaton that  recognizes the language $\Red$.
\end{Theorem}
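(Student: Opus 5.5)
We need to show that $\mathcal A_G$ is finite and deterministic, and that it recognizes $\Red$. Finiteness is immediate because $G$ is finite. Determinism holds because from each state $w$ and letter $s$ there is at most one transition, namely to $\pi_G(sw)$. The substance is the recognition claim. We read words from right to left, consistent with the transitions $w\xrightarrow{s}\pi_G(sw)$ prepending $s$; reading in the other order is symmetric. The key invariant we aim for is:
\begin{center}
after reading a reduced word for $x$, the automaton sits at $\pi_G(x)$,
\end{center}
and a letter $s$ can be read from that state exactly when $sx$ is again reduced, i.e. $\ell(sx)>\ell(x)$.

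First, we need that $\pi_G$ is well defined. The set $\{g\in G\mid g\leq_R w\}$ contains $e$. This is automatic if $e\in G$; otherwise we add $e$ as the initial state, which does not affect the argument. The set is bounded by $w$, so its join exists. Because $G$ is closed under joins, the join lies in $G$, and by definition it is $\leq_R w$. So $\pi_G(w)$ is the largest element of $G$ below $w$ in the right weak order.

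The core lemma is the following. For $x\in W$ and $s\in S$ with $\ell(sx)>\ell(x)$, we want
\[
\pi_G(sx)=\pi_G\bigl(s\,\pi_G(x)\bigr)\quad\text{and}\quad \ell\bigl(s\,\pi_G(x)\bigr)>\ell\bigl(\pi_G(x)\bigr).
\]
The second claim comes from the first step. Since $\pi_G(x)\leq_R x$, write $x=\pi_G(x)v$ as a reduced product. Then $sx=s\pi_G(x)v$, and $\ell(sx)=\ell(x)+1$ forces $s\pi_G(x)$ to be reduced and $\pi_G(x)\le_R x$ to lift to $s\pi_G(x)\leq_R sx$.

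For the first claim, set $g=\pi_G(sx)$. The inequality $\pi_G(s\pi_G(x))\leq_R g$ holds because $s\pi_G(x)\leq_R sx$ and $\pi_G$ is order preserving, which follows directly from its definition as a join. For the reverse inequality, we must show $g\leq_R s\pi_G(x)$. There are two cases.
\begin{itemize}
\item If $s\notin D_L(g)$, then $g\leq_R sx$ together with $s$ not being a left descent gives $g\leq_R x$. Hence $g\leq_R \pi_G(x)\leq_R s\pi_G(x)$. The implication used here is the standard fact that $g\le_R sx$ with $\ell(sg)>\ell(g)$ implies $g\le_R x$, since the inversion set of $g$ avoids $\alpha_s$.
\item If $s\in D_L(g)$, then $g=s g'$ is reduced, and suffix-closure of $G$ gives $g'\in G$. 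We also have $g'\leq_R x$, so $g'\leq_R\pi_G(x)$, and therefore $g=sg'\leq_R s\pi_G(x)$.
\end{itemize}
In both cases $g\leq_R s\pi_G(x)$, and since $g\in G$ this gives $g\leq_R \pi_G(s\pi_G(x))$.

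With the lemma in hand, induction on word length shows that every reduced word is accepted, and that the state reached after reading a reduced word for $x$ is $\pi_G(x)$. For the converse, suppose the word read so far represents $x$ reduced and $\ell(sx)<\ell(x)$. Then $s\in D_L(x)$, so $s\leq_R x$. Since $s\in G$, we get $s\leq_R\pi_G(x)$, so $s\in D_L(\pi_G(x))$ and no $s$-transition exists. Thus only reduced words are accepted, and all states are final.

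The main obstacle is the reverse inequality in the lemma, specifically the case analysis on whether $s$ is a left descent of $\pi_G(sx)$. That is exactly where both closure axioms of a Garside shadow are needed: join-closure in the first case, suffix-closure in the second. It also relies on the inversion-set characterization of the weak order.
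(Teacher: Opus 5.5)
Your overall strategy is sound and matches what the paper later imports from \cite{HNW}: the word is recognized in the state $\pi_G$ of the element, and $D_L(\pi_G(x))=D_L(x)$. The paper itself gives no proof of this theorem. Your length argument, your monotonicity direction, your Case 2 and your converse step are all correct.

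However, Case 1 contains a false step. In the chain $g\leq_R \pi_G(x)\leq_R s\,\pi_G(x)$, the second inequality fails in the right weak order. Left multiplication by $s$ makes $\pi_G(x)$ a \emph{suffix} of $s\,\pi_G(x)$, so $\pi_G(x)\leq_L s\,\pi_G(x)$; it does not make it a prefix. Given $\ell(s\,\pi_G(x))=\ell(\pi_G(x))+1$, the relation $\pi_G(x)\leq_R s\,\pi_G(x)$ would force $\pi_G(x)^{-1}s\,\pi_G(x)\in S$. For instance, if $m_{st}\geq 3$ and $\pi_G(x)=t$, then $t\not\leq_R st$. No general principle rescues the step either: $g\leq_R u$ and $s\notin D_L(g)$ do not imply $g\leq_R su$ (take $g=u=t$ as above).

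The repair is that Case 1 is empty. Since $\ell(sx)>\ell(x)$, we have $s\leq_R sx$. Since $s\in S\subseteq G$, this gives $s\leq_R \pi_G(sx)=g$, i.e.\ $s\in D_L(g)$. This is exactly the argument you already use in your converse step. Hence only Case 2 occurs, and your reasoning there completes the lemma: suffix-closure gives $g'\in G$; then $g'\leq_R x$, hence $g'\leq_R \pi_G(x)$, hence $sg'\leq_R s\,\pi_G(x)$ using $\ell(s\,\pi_G(x))>\ell(\pi_G(x))$.

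Accordingly, your closing remark misplaces the role of the axioms. Join-closure plays no part in the case analysis; it is what guarantees $\pi_G(w)\in G$, so that transitions land in the state set. Also, $e\in G$ is automatic, since it is a suffix of $s\in G$, so no initial state needs to be added.
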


As a consequence, the authors recover that $\Red$ is regular and that the Poincar\'e series $\Red(q)$ is rational. We refer the reader also to~\cite{PY22,PY24,Sa25} for recent developments around Garside shadow automata. 

\begin{Example}\label{ex:aut1}
Let $(W,S)$ be a Coxeter system of type 
\begin{center}
\begin{tikzpicture}
	[scale=2,
	 q/.style={teal,line join=round},
	 racine/.style={blue},
	 racinesimple/.style={blue},
	 racinedih/.style={blue},
	 sommet/.style={inner sep=2pt,circle,draw=black,fill=blue!40,thick,anchor=base},
	 rotate=0]
 \tikzstyle{every node}=[font=\small]
\def\grosseursimple{0.025}
\coordinate (ancre) at (0,3);

\node[sommet,label=above:$1$] (a2) at ($(ancre)+(0.25,0.4)$) {};
\node[sommet,label=below right :$3$] (a3) at ($(ancre)+(0.5,0)$) {} edge[thick] node[auto,swap,right] {}(a2) ;
\node[sommet,label=below left:$2$] (a4) at (ancre) {} edge[thick] node[auto,swap,below] {$\infty$} (a3) edge[thick] node[auto,swap,left] {} (a2);
\end{tikzpicture}
\end{center}

 The set $G=W_{\{s_1,s_2\}}\cup W_{\{s_1,s_3\}}$ is a Garside shadow for $(W,S)$ and its associated Garside shadow automaton $\mathcal{A}_{G}$ is the one represented in Figure \ref{fig:ex aut1}. The transitions labeled by $s_1$ are colored orange, $s_2$ in purple, and $s_3$ in green.

\begin{figure}[h]
     \centering
     \includegraphics[width=0.5\linewidth]{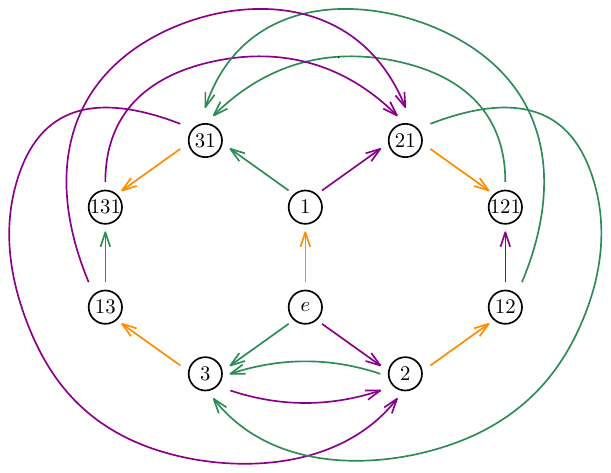}
     \caption{The automaton $\mathcal{A}_{G}$ of Example \ref{ex:aut1}.}
     \label{fig:ex aut1}
\end{figure}

\end{Example}

\subsection{Root systems and inversion sets}\label{ss:InversionSets}
We recall some basic facts about root systems and inversion sets; for more details, see~\cite[\S2.3]{DH}. 
\smallskip

Recall that a quadratic space $(V, B)$ is the data of a finite dimensional real vector space $V$ endowed with a symmetric bilinear form $B$. We denote by $O_B(V)$ the group of linear maps of $V$ preserving $B$ and by $Q = \{v \in V \mid B(v,v) = 0\}$ the {\em isotropic cone} of $(V,B)$.

Any Coxeter system $(W,S)$ admits a {\em geometric representation}, that is, a faithful representation of $W$ as a discrete reflection subgroup of $O_B(V)$ for some quadratic space $(V,B)$ with {\em simple system} $\Delta = \{\alpha_s \mid s \in S\}$ satisfying  the following conditions:
\begin{enumerate}
    \item  $\Delta$ is {\em positively linearly independent}: if $\sum_{\alpha \in \Delta} a_{\alpha} \alpha=0$ with $a_\alpha \geq 0$, then all $a_\alpha = 0$
\item if the order $m_{st}$ of the product $st$ is finite, then $B(\alpha_s,\alpha_t) = -\cos\left(\frac{\pi}{m_{st}}\right)$; while $B(\alpha_s,\alpha_t) \leq -1$ if and only $m_{st}$ is infinite.
\item for all $s\in S$, $B(\alpha_s,\alpha_s)=1.$ 
\end{enumerate}
Then $s\in S$ acts on $V$ as the reflection defined by $s(v)=v-2B(\alpha_s,v)\alpha_s$. Denote by $\Phi := W(\Delta)$ the associated {\em  root system}, which is the disjoint union of the sets of {\em positive roots } $\Phi^+=\cone(\Delta)\cap \Phi$ and {\em negative roots} $\Phi^-:=-\Phi^+$.
The pair $(\Phi,\Delta)$ is called a {\em based root system}, and its {\em rank} is $|\Delta| = |S|$. The {\em classical geometric representation} is obtained by assuming that $\Delta$ is a basis of $V$ and that $B(\alpha_s,\alpha_t) = -1$ whenever the order of $st$ in $W$ is infinite. For further details on the classical geometric representation, we refer the reader to [\cite[\S4.1]{BB}.

It is well-known that $\Phi^+$ is in bijection with $T$, see for instance~\cite[Proposition 4.4.5]{BB}:
$$
\Phi^+=\{\alpha_t\mid t \in T\}\quad\textrm{and}\quad T=\{s_\alpha \mid \alpha\in \Phi^+\},
$$
where $\alpha_t$ is the root associated to the reflection $s_{\alpha_t}=t$.

The \textit{inversion set of $w\in W$} is:
$$
\Phi(w)=\{\alpha\in \Phi^+ \mid w^{-1}(\alpha)\in\Phi^-\}.
$$
It is well-known that if $w=uv$ is a reduced product in $W$, then
$$
\Phi(w)=\Phi(u)\sqcup u(\Phi(v)),
$$
see for instance~\cite[Proposition~2.2]{DFHM24}. It follows in particular that:
\begin{itemize}
\item $u\leq_R w$ in the weak order if and only if $\Phi(u)\subseteq \Phi(w)$;
\item $\Phi(w)=\{\alpha_{s_1},s_1(\alpha_{s_2}),\dots, s_1\cdots s_{k-1}(\alpha_{s_k})\}$ if $w=s_1\cdots s_k$ is a reduced word for $w$. 
\end{itemize}

\subsection{The root poset}\label{ss:RootPoset} The \textit{depth function $\dep:\Phi^+\to \mathbb N$}  is defined by
$$
    \dep(\beta):=\min\{\ell(w)\mid w\in W, w(\beta)\in \Delta\}.
$$

The next result is \cite[Lemma~4.6.2]{BB} and describes how the depth of a root changes under the action of a simple reflection $s\in S$. 

\begin{Lemma}\label{lem:depth}
Let $s\in S$ and $\beta\in \Phi^+\setminus \{\alpha_s\}$. Then $\dep(\alpha_s)=0$ and

\begin{equation}\label{eq:depth}
\dep(s(\beta))=\begin{cases}
    \dep(\beta)-1, &\mbox{ if } B(\beta, \alpha_s)>0;\\
    \dep(\beta), &\mbox{ if } B(\beta, \alpha_s)=0;\\
    \dep(\beta)+1, &\mbox{ if } B(\beta, \alpha_s)<0.
\end{cases}
\end{equation}
\end{Lemma}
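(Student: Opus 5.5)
The plan is to work in the geometric representation and compare $\beta$ with $s(\beta)=\beta-2B(\alpha_s,\beta)\alpha_s$. First, $\dep(\alpha_s)=0$ holds trivially by taking $w=e$. For $\beta\in\Phi^+\setminus\{\alpha_s\}$, recall the standard fact that $s$ permutes $\Phi^+\setminus\{\alpha_s\}$; in particular $s(\beta)\in\Phi^+$, so $\dep(s(\beta))$ is defined. Since $s$ is an involution, it suffices to prove two things: $|\dep(s(\beta))-\dep(\beta)|\le 1$, and the sign of the difference is governed by $B(\beta,\alpha_s)$. Note that $B(s(\beta),\alpha_s)=-B(\beta,\alpha_s)$, so the first and third cases are exchanged by replacing $\beta$ with $s(\beta)$.

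\emph{The case $B(\beta,\alpha_s)=0$.} Here $s(\beta)=\beta$, so there is nothing to prove.

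\emph{The upper bound.} If $w(\beta)\in\Delta$ with $\ell(w)=\dep(\beta)$, then $ws$ sends $s(\beta)$ into $\Delta$. Hence $\dep(s(\beta))\le \dep(\beta)+1$, and by symmetry $|\dep(s(\beta))-\dep(\beta)|\le1$.

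\emph{The main obstacle: the strict inequalities.} Suppose $B(\beta,\alpha_s)>0$, so that $s(\beta)=\beta-c\alpha_s$ with $c>0$. We must show $\dep(s(\beta))=\dep(\beta)-1$, i.e.\ that depth strictly drops. I would argue by induction on $\dep(\beta)$, using a minimal $w$ written as $w=s_1\cdots s_k$ and the chain of roots $\beta_i=s_{i}\cdots s_1(\beta)$. The key claim is the standard one from \cite[\S4.6]{BB}: depth strictly decreases along a root chain exactly when the simple reflection pairs positively. Concretely, take the first letter $s_1=r$ of a minimal path. Then $\dep(r(\beta))=\dep(\beta)-1$ and $B(\beta,\alpha_r)>0$, because a nonpositive pairing would make $r(\beta)$ have larger coefficients, contradicting minimality by induction.

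Now, given $s$ with $B(\beta,\alpha_s)>0$ and $s\ne r$, consider the dihedral subgroup $\langle r,s\rangle$ acting on $\beta$. The idea is to reduce to rank-two computations: write $\beta$ as a positive combination of a root in the rank-two root subsystem spanned by $\alpha_r,\alpha_s$ plus a remainder that is unchanged modulo that span. One then checks in the dihedral case, using the explicit coordinates of the roots (governed by $-\cos(\pi/m_{rs})$ or $\le -1$), that applying $s$ also reduces depth. The alternative, if this becomes messy, is to define depth via the height-like minimal chain and invoke Brink--Howlett's characterization, namely that $\dep(\beta)$ equals the minimal number of steps $\beta\to s(\beta)$ with $B(\beta,\alpha_s)>0$ until reaching a simple root. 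If we cannot establish the strict inequality directly, the fallback is to show that the pairing condition $B(\beta,\alpha_s)>0$ forces the case $\dep(s(\beta))\ne\dep(\beta)+1$ via the rank-two analysis above.
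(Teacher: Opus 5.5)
\textbf{There is a genuine gap: the step that carries the content of the lemma is missing.} The paper gives no proof of its own; it quotes the lemma from \cite[Lemma~4.6.2]{BB}.

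Your routine steps are correct. These are: $\dep(\alpha_s)=0$; $s(\beta)\in\Phi^+$; the case $B(\beta,\alpha_s)=0$; the bound $\dep(s(\beta))\le\dep(\beta)+1$ via $ws$; and the fact that the cases $B>0$ and $B<0$ are swapped by $\beta\leftrightarrow s(\beta)$. Together they only give $|\dep(s(\beta))-\dep(\beta)|\le 1$. The whole content of the lemma is that the depth actually changes when $B(\beta,\alpha_s)\neq 0$, in the direction fixed by the sign, and your proposal does not prove this.

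The rank-two reduction is not an argument as written. Depth is a global invariant, so no coordinate computation on the $\alpha_r,\alpha_s$-part of $\beta$ inside $\langle r,s\rangle$ can show by itself that $s$ lowers depth. To make it work you would have to walk the $\langle r,s\rangle$-orbit of $\beta$ and invoke the induction hypothesis at each step. You would also need to show $\langle r,s\rangle$ is finite and treat $\beta$ in the span of $\alpha_r,\alpha_s$ separately; none of this is set up. Your induction on $\dep(\beta)$ also cannot directly rule out $\dep(s(\beta))\in\{\dep(\beta),\dep(\beta)+1\}$, because $s(\beta)$ then does not have smaller depth. Finally, the ``Brink--Howlett characterization'' you offer as a fallback is essentially a restatement of the lemma, so invoking it is circular.

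\textbf{The missing idea is a direct lower bound, and it needs no induction.}

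Suppose $B(\beta,\alpha_s)<0$, so $s(\beta)=\beta+c\,\alpha_s$ with $c>0$. Choose $w$ with $\ell(w)=\dep(s(\beta))$ and $w(s(\beta))=\alpha_r\in\Delta$, and split on whether $s$ is a right descent of $w$.
\begin{enumerate}
\item If $\ell(ws)<\ell(w)$, then $ws$ sends $\beta$ to $\alpha_r$ and has length $\ell(w)-1$.
\item If $\ell(ws)>\ell(w)$, then $w(\alpha_s)\in\Phi^+$, and $w(\beta)=\alpha_r-c\,w(\alpha_s)$. This root cannot be positive: if $w(\alpha_s)=\alpha_r$ it equals $(1-c)\alpha_r$, which is positive only for $c=0$. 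Otherwise $w(\alpha_s)$ has a positive coefficient on some $\alpha_t$ with $t\neq r$. So $w(\beta)\in\Phi^-$, and the argument of the Remark in \S\ref{ss:RootPoset} ($\dep'=\dep+1$) gives $\dep(\beta)\le\ell(w)-1$.
\end{enumerate}
In both cases $\dep(\beta)\le\dep(s(\beta))-1$. Combined with your upper bound, this gives $\dep(s(\beta))=\dep(\beta)+1$. The case $B(\beta,\alpha_s)>0$ then follows by applying this to $s(\beta)$, exactly as in your symmetry remark.
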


\begin{Remark} 
The definition for the depth of a positive root $\beta$ provided above is analogous to the one originally introduced in \cite[Definition 1.5]{BH} (see also \cite[Definition 4.6.1]{BB}), which is given by: 
$$
\dep'(\beta):=\min\{\ell(w)\mid w\in W, w(\beta)\in \Phi^-\}.
$$
This is equivalent to $\dep'(\beta)=\min\{\ell(w)\mid w\in W, w(\beta)\in -\Delta\}$. Indeed, if $w=s_1\cdots s_k$ is such that $w(\beta)\in \Phi^-$, then there exists an index $1\leq i\leq k$ such that $s_i\cdots s_k(\beta) \in \Phi^+$ while $s_{i-1}\cdots s_k(\beta) \in \Phi^-$.  Since $s_{i-1}$ permutes $\Phi^+\setminus \{\alpha_{s_{i-1}}\}$, it follows that $s_i\cdots s_k(\beta)=\alpha_{s_{i-1}}\in \Delta$. Hence, the two definitions are related by $\dep'(\beta)=\dep(\beta)+1$.
\end{Remark}

Recall from \cite[\S4.6]{BB} that $\Phi^+$ is equipped with a partial order $\leq$ called the \textit{root poset}.

\begin{Definition}
The \textit{root poset} $(\Phi^+, \leq)$ is defined as follows: $\alpha\leq \beta$ if and only if there exists $w\in W$ such that $w(\alpha)=\beta$ and $\ell(w)=\dep(\beta)-\dep(\alpha)$.     
\end{Definition}

Here we state well-known properties of the root poset, which can be found within~\cite[\S4.6~\&~\S4.7]{BB}.
\begin{Proposition}\label{prop:BasicDepth} The root poset $(\Phi^+,\leq)$ satisfies:
    \begin{itemize}
        \item[(i)] $(\Phi^+,\leq)$ is a graded poset with rank function $\dep: \Phi^+\rightarrow \mathbb{N}$.
        \item[(ii)] For $\alpha\in\Phi^+$, $\dep(\alpha)=0$ if and only if $\alpha\in\Delta$.
        \item[(iii)] The cover relations are given by $\alpha \lhd s(\alpha)$ with $s\in S$ and $\alpha \in \Phi^+\setminus \{\alpha_s\}$ such that $\dep(\alpha)<\dep(s(\alpha))$ (or equivalently, if $B(\alpha,\alpha_s)<0$). 
        \item[(iv)] We have $\alpha<\beta$ if and only if there exist $s_1,\ldots, s_k\in S$ such that $\alpha \lhd s_1(\alpha)\lhd s_2s_1(\alpha)\lhd \ldots \lhd s_k\cdots s_2s_1(\alpha)=\beta$. In this case, $\dep(\beta)-\dep(\alpha)=k=\ell(s_1\cdots s_k)$. 
    \end{itemize}
\end{Proposition}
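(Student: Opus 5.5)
The plan is to derive the four properties of Proposition~\ref{prop:BasicDepth} from Lemma~\ref{lem:depth} and the definition of the root poset, in the order (ii), (iii), (iv), (i).

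For (ii), Lemma~\ref{lem:depth} already gives $\dep(\alpha_s)=0$. Conversely, if $\dep(\beta)=0$, then the identity is a minimizer in the definition of $\dep$, so $\beta=e(\beta)\in\Delta$.

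For the cover relations (iii) and the chain description (iv), the key observation is the following. Let $\beta\in\Phi^+\setminus\Delta$ and let $w=s_1\cdots s_k$ be reduced with $k=\dep(\beta)$ and $w(\beta)\in\Delta$. Then the intermediate roots $\beta_i=s_i\cdots s_k(\beta)$ are all positive: a negative one would pass through a simple root earlier and contradict minimality, as in the Remark. Moreover, by Lemma~\ref{lem:depth} the depth changes by at most one at each step, so it must drop by exactly one at every step.

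Now suppose $\alpha\le\beta$ via $w(\alpha)=\beta$ with $\ell(w)=\dep(\beta)-\dep(\alpha)=k$. Write $w=s_k\cdots s_1$ reduced, and set $\gamma_i=s_i\cdots s_1(\alpha)$. Each step changes depth by at most one, and the total increase is $k$ over $k$ steps. Hence every step increases depth by exactly $1$. In particular each $\gamma_{i-1}\ne\alpha_{s_i}$: otherwise $\gamma_i$ would be negative, and in any case the step would not raise depth. So each step is a relation $\gamma_{i-1}\lhd s_i(\gamma_{i-1})$ with $B(\gamma_{i-1},\alpha_{s_i})<0$, which gives the chain in (iv).

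Conversely, a chain of such steps gives an element $w=s_k\cdots s_1$ with $w(\alpha)=\beta$ and $\dep(\beta)-\dep(\alpha)=k\ge\ell(w)$. Since depth moves by at most one per simple reflection, we also have $\ell(w)\ge\dep(\beta)-\dep(\alpha)$, so $\ell(w)=k$. This proves (iv). It also shows that any relation $\alpha<\beta$ with $\dep(\beta)=\dep(\alpha)+1$ is a single step $\beta=s(\alpha)$ with $B(\alpha,\alpha_s)<0$, which is the cover description in (iii), by the equivalence in Lemma~\ref{lem:depth}.

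For (i), we first check that $\le$ is a partial order. Reflexivity comes from $w=e$. Transitivity comes from concatenating chains, or directly: if $w(\alpha)=\beta$ and $w'(\beta)=\gamma$, then $\ell(w'w)\le\dep(\gamma)-\dep(\alpha)$, and equality is forced by the at-most-one-per-step bound. Antisymmetry follows because $\dep$ strictly increases along $<$. Gradedness with rank function $\dep$ then follows from (iii) and (iv): every cover raises $\dep$ by exactly $1$, and minimal elements have depth $0$ by (ii), since every non-simple root has a down-step by the key observation above.

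The main obstacle is the careful bookkeeping that intermediate roots stay positive and avoid the simple root being applied. This is what lets Lemma~\ref{lem:depth} apply at every step, and it is exactly what the "exactly one per step" counting argument secures.
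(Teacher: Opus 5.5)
The paper does not prove this proposition; it quotes it from \cite[\S4.6--4.7]{BB}. So your self-contained derivation from Lemma~\ref{lem:depth} and the definition of depth is the natural route. Part (ii), your key observation (there, minimality of a depth-realizing word does force the intermediate roots to be positive), antisymmetry, and the gradedness argument are fine.

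There is, however, a genuine gap in the forward direction of (iv), exactly at the point you flag as the main obstacle. To say that each step $\gamma_{i-1}\mapsto\gamma_i=s_i(\gamma_{i-1})$ changes depth by at most one, you must already know that every $\gamma_i$ lies in $\Phi^+$ and that $\gamma_{i-1}\neq\alpha_{s_i}$. This is because $\dep$ is only defined on $\Phi^+$, and Lemma~\ref{lem:depth} only applies to roots in $\Phi^+\setminus\{\alpha_s\}$. You then deduce $\gamma_{i-1}\neq\alpha_{s_i}$ ``in particular'' from that same counting, which is circular: if some $\gamma_i$ were negative, the count would not even be defined.

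Unlike in the key observation, there is no minimality to appeal to here. What you need is the reducedness of $w=s_k\cdots s_1$. Suppose some $\gamma_i$ were negative, and take the first such $i$. Then $\gamma_{i-1}=\alpha_{s_i}$, so $\gamma_i=-\alpha_{s_i}$ and $\beta=\gamma_k=-s_k\cdots s_{i+1}(\alpha_{s_i})$. Since $s_k\cdots s_{i+1}s_i$ is reduced, $\ell(s_k\cdots s_{i+1}s_i)>\ell(s_k\cdots s_{i+1})$, hence $s_k\cdots s_{i+1}(\alpha_{s_i})\in\Phi^+$. So $\beta\in\Phi^-$, a contradiction. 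Thus all $\gamma_i$ are positive, $\gamma_{i-1}\neq\alpha_{s_i}$ for every $i$, and your counting argument then goes through.

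The same issue affects the inequality $\ell(w)\geq\dep(w(\alpha))-\dep(\alpha)$, which you use in the converse of (iv) and in transitivity. You justify it by ``depth moves by at most one per simple reflection'', but applied step by step along a reduced word of $w$ this has the same positivity problem. It follows directly from the definition instead: if $v(\alpha)\in\Delta$ with $\ell(v)=\dep(\alpha)$, then $vw^{-1}$ sends $w(\alpha)$ into $\Delta$, so $\dep(w(\alpha))\leq\dep(\alpha)+\ell(w)$. With these two repairs your argument is complete.
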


\begin{Definition}\label{def:long}
    Let $s\in S$ and $\alpha \in \Phi^+\setminus\{\alpha_s\}$. A covering $\alpha \lhd s(\alpha)$ in the root poset $(\Phi^+, \leq)$ is called \textit{short} if $-1<B(\alpha, \alpha_s)<0$; otherwise, the covering is called \textit{long}. 
\end{Definition} 

By \cite[Proposition 4.5.4]{BB}, a dihedral subgroup generated by two reflections $s_{\alpha},s_{\beta}\in T$ is infinite if and only if $|B(\alpha,\beta)|\geq 1$, hence a covering $\alpha \lhd s(\alpha)$ is short (respectively, long) if and only if the dihedral group generated by $s_\alpha$ and $s$ is finite (respectively, infinite).

\begin{Example}\label{ex:root poset}
    Consider the Coxeter system as in Example \ref{ex:aut1}. The simple roots are $\Delta=\{\alpha_1, \alpha_2, \alpha_3\}$ and $\Phi^+=\cone(\Delta)\cap W(\Delta)$. In Figure \ref{fig:ex root poset gamma}, the Hasse diagram of the root poset for $\Phi^+$ is depicted up to depth equal to 3. The positive roots are expressed by their coordinates with respect to $\Delta$; for instance, $312$ means $3\alpha_1 + \alpha_2 + 2\alpha_3$. Moreover, each edge corresponds to a cover relation, and the colors depend on the simple reflection that is applied: orange corresponds to the simple reflection $s_1$, purple $s_2$ and green $s_3$. Finally, the edges are dashed when the cover is long, otherwise, the cover is short. For instance, $010\lhd 110=s_1(010)$ is a short covering since $B(\alpha_2, \alpha_1)=-1/2>-1$, while $B(\alpha_2+2\alpha_3, \alpha_1)=-1/2 -1 <-1$, hence $012\lhd 312=s_1(012)$ is a long cover relation, where $B$ is bilinear form of the canonical geometric representation.
\end{Example}
\begin{figure}[hbtp]
\centering
\includegraphics[scale=.9]{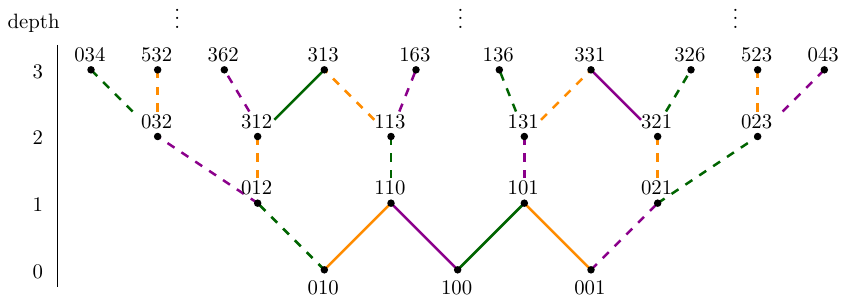}
\caption{Hasse diagram of the root poset of Example \ref{ex:root poset}.}
\label{fig:ex root poset gamma}
\end{figure}

\subsection{Small roots}\label{ss:SmallRoots} A \textit{small root\footnote{Also called an {\em elementary root} in the literature}} is a positive root that is reachable from a simple root by a chain of short covering relations in the root poset $(\Phi^+,\leq)$; see for instance \cite[\S 4.7]{BB}. We denote the set of small roots by $\Sigma$.  

\begin{Example} Consider $(W,S)$  of  Example \ref{ex:root poset}, then there are only five small roots:
$$
\Sigma=\{\alpha_1, \alpha_2, \alpha_3, \alpha_1+\alpha_2, \alpha_1+\alpha_3\}.
$$
\end{Example}

\begin{Remark}\label{rem:small-roots} In a finite Coxeter group, all positive roots are small. This follows from the fact that the root poset of a finite Coxeter system contains no long covering relation, since there are no infinite dihedral subgroups.
\end{Remark}

Small roots were introduced by Brink and Howlett in~\cite{BH} to show that Coxeter groups are automatic. The key point is the following statement; see also \cite[Theorem 4.7.3]{BB}.

\begin{Theorem}[{\cite[Theorem~2.8]{BH}}]\label{thm:BH} The set $\Sigma$ is finite  whenever $S$ is finite. 
\end{Theorem}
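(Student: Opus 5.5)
This is Brink and Howlett's finiteness theorem, so the natural plan is to reprove it along the lines of \cite[\S4.7]{BB}, using only the depth machinery of \S\ref{ss:RootPoset}. I would work in the classical geometric representation, where $\Delta$ is a basis. Write each positive root as $\beta=\sum_{s\in S}c_s(\beta)\alpha_s$ with $c_s(\beta)\ge 0$. The proof splits into two finiteness statements.
\begin{enumerate}
\item The coefficients of small roots are uniformly bounded: there is $C=C(W,S)$ with $c_s(\beta)\le C$ for all $\beta\in\Sigma$ and $s\in S$.
\item For any $C$, only finitely many positive roots have all coefficients at most $C$.
\end{enumerate}

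For (1) I would induct along a chain of short coverings $\alpha_{s_0}\lhd s_1(\alpha_{s_0})\lhd\cdots$. A short step
$$
\beta\lhd s(\beta)=\beta-2B(\beta,\alpha_s)\alpha_s
$$
changes only the $s$-coordinate, and increases it by $-2B(\beta,\alpha_s)\in(0,2)$. A naive induction therefore gives no bound, since many steps can each add almost $2$.

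The real argument is a rank-two analysis of the restriction of $\beta$ to each pair $\{s,t\}$ with $m_{st}=\infty$. A short covering requires $B(\beta,\alpha_s)>-1$. Expanding
$$
B(\beta,\alpha_s)=c_s(\beta)-\sum_{t\ne s}c_t(\beta)\,|B(\alpha_s,\alpha_t)|,
$$
the terms with $|B(\alpha_s,\alpha_t)|\ge 1$ give the constraint
$$
c_s(\beta)+1>\sum_{t\ne s}c_t(\beta)\,|B(\alpha_s,\alpha_t)|.
$$
I would then prove, by induction on depth, the following invariant along short chains: the support of a small root contains no edge with $m_{st}=\infty$ carrying two large coefficients, and the coefficients restricted to any finite parabolic subsystem $W_I$ lie among the coefficients of the finite root system of $W_I$. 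For finite $W_I$ the latter set is finite, by Remark~\ref{rem:small-roots}. From the invariant, every coefficient of a small root is bounded by the maximal coefficient occurring in the finite parabolic root systems, plus a constant coming from the $\infty$-edges.

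Establishing this invariant is the main obstacle. Concretely, one must show that once a short step would push some coefficient beyond what a finite parabolic subsystem allows, the next step along any edge with $m_{st}=\infty$, or along a cycle of the Coxeter graph, is forced to be long. I would attempt it by restricting $\beta$ to the rank-two subsystem spanned by $\alpha_s,\alpha_t$ and applying Lemma~\ref{lem:depth} inside the corresponding dihedral subgroup.

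For (2), take a positive root $\beta\notin\Delta$ with coefficients at most $C$. Some $s$ has $B(\beta,\alpha_s)>0$, and by Lemma~\ref{lem:depth} the root $s(\beta)$ has depth one less, with smaller coordinates. Since $B(\beta,\beta)=1$, I would show that such a descent always removes at least a fixed $\varepsilon(C)>0$ from the sum of the coordinates. This should follow because $B(\beta,\alpha_s)$ is a linear form with fixed coefficients evaluated on a bounded region avoiding the isotropic cone $Q$. It would give $\dep(\beta)\le nC/\varepsilon$, where $n=|S|$. Since $S$ is finite, there are finitely many $w\in W$ of bounded length, hence finitely many positive roots of bounded depth, and so $\Sigma$ is finite.
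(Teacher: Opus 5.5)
You should know first that the paper gives no proof of this statement: it is quoted from \cite[Theorem~2.8]{BH} (see also \cite[Theorem~4.7.3]{BB}), so your proposal has to stand on its own. There is a genuine gap in step (1).

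Your step (2) is fine once you specify which descent to take. Since $0\le c_s(\beta)\le C$, one has $1=B(\beta,\beta)=\sum_s c_s(\beta)B(\beta,\alpha_s)\le nC\max_s B(\beta,\alpha_s)$. So descending along an $s$ that \emph{maximizes} $B(\beta,\alpha_s)$ lowers the coordinate sum by at least $2/(nC)$, whence $\dep(\beta)<n^2C^2/2$. An arbitrary descent need not remove a fixed amount.

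Step (1), however, is equivalent to the theorem itself given (2), and you do not prove it: you state an invariant and call its proof ``the main obstacle''. Moreover, the invariant is false as stated. Take type $D_4$ with central node $s_2$ (or $\widetilde{D}_4$, where $\Phi_0^+\subseteq\Sigma$). Successively applying $s_1,s_3,s_4,s_2$ to $\alpha_2$ uses only short coverings and ends at the small root $\alpha_1+2\alpha_2+\alpha_3+\alpha_4$. Its restriction to the finite parabolic $I=\{s_1,s_2,s_3\}$ of type $A_3$ has coefficient $2$, which occurs in no root of $A_3$. The $\infty$-edge half of your invariant is true, even in the stronger form that the support of a small root contains no $\infty$-edge, because nonzero coefficients of roots in the classical representation are at least $1$. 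But that is the easy part.

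The mechanism you propose for forcing long steps is also not the right one. In $\widetilde{D}_4$ the Coxeter graph is a tree with all labels $3$, so there are no $\infty$-edges and no cycles. Yet the small root $\delta-\alpha_2=\alpha_0+\alpha_1+\alpha_2+\alpha_3+\alpha_4$ is covered by $\delta+\alpha_2$, which has coefficient $3$ at the central node, more than any finite parabolic subsystem allows. The only thing preventing $\delta+\alpha_2$ from being small is that this covering is long (Proposition~\ref{prop:long-cover}).

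That length comes from $B(\beta,\alpha_s)=c_s-\sum_{t\neq s}c_t|B(\alpha_s,\alpha_t)|$, which involves all neighbours of $s$ at once. Equivalently, it comes from the infiniteness of the dihedral \emph{reflection} subgroup $\langle s_\beta,s\rangle$, which is in general not a standard parabolic subgroup. Examining one edge $\{s,t\}$ at a time and applying Lemma~\ref{lem:depth} inside $W_{\{s,t\}}$ cannot detect this. In any case, the part $c_s\alpha_s+c_t\alpha_t$ of $\beta$ is usually not a root of $W_{\{s,t\}}$, so that lemma does not apply to it.

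What is missing is a genuine global argument that roots reachable from $\Delta$ by short coverings have coefficient vectors in a bounded set. That is exactly the content of the Brink--Howlett theorem, and your proposal offers no substitute for it.
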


\subsection{The dominance order}\label{ss:dominance}

We present yet another partial order $\leq_d$ on the positive roots of $W$, called the \textit{dominance order}, which was also introduced by Brink and Howlett in \cite{BH}.   Let $\alpha,\beta\in\Phi^+$, we say that \textit{$\beta$ dominates $\alpha$}, denoted by $\alpha\leq_d \beta$, if and only if, for $w\in W$, $\beta\in \Phi(w)$ implies $\alpha\in \Phi(w)$. For $\beta\in \Phi^+$, the {\em dominance set of $\beta$} is:
   $$
   \dom(\beta)=\{\alpha\in \Phi^+\mid \alpha\leq_d \beta\}.
   $$
    The \textit{dominance-depth (or infinite-depth)} of a positive root $\beta\in\Phi^+$ is defined as follows:
$$
    \dep_\infty(\beta):=|\{\alpha\in \Phi^+\setminus\{\beta\}\mid\alpha\leq_d\beta\}|=|\dom(\beta)|-1.
$$ 
We have an analogous result of Lemma \ref{lem:depth}, see for instance \cite[Proposition 3.3]{DH}.

\begin{Proposition}\label{prop:inf-depth}
   Let $s\in S$ and $\beta\in\Phi^+\setminus\{\alpha_s\}$, then we have $\dep_\infty(\alpha_s)=0$ and
    $$\dep_\infty(s(\beta))=\begin{cases}
        \dep_\infty(\beta), &\mbox{ if } |B(\beta, \alpha_s)|<1;\\
    \dep_\infty(\beta)+1, &\mbox{ if } B(\beta, \alpha_s)\leq -1;\\
    \dep_\infty(\beta)-1, &\mbox{ if } B(\beta, \alpha_s)\geq 1.
    \end{cases}$$
\end{Proposition}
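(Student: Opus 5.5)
The plan is to compare $\dom(s(\beta))$ with $\dom(\beta)$ through the action of $s$. This reduces the whole statement to one criterion: when does $\alpha_s$ dominate a positive root?

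\emph{Step 1 (equivariance).} Let $\alpha,\beta\in\Phi^+\setminus\{\alpha_s\}$ and $w\in W$. Since $(sw)^{-1}(s(\beta))=w^{-1}(\beta)$, we have $\beta\in\Phi(w)$ if and only if $s(\beta)\in\Phi(sw)$; recall that $s$ permutes $\Phi^+\setminus\{\alpha_s\}$. As $w\mapsto sw$ is a bijection of $W$, this gives
$$
\alpha\leq_d\beta \iff s(\alpha)\leq_d s(\beta),
$$
and hence $\dom(s(\beta))\setminus\{\alpha_s\}=s\bigl(\dom(\beta)\setminus\{\alpha_s\}\bigr)$. Therefore
$$
\dep_\infty(s(\beta))-\dep_\infty(\beta)=[\alpha_s\leq_d s(\beta)]-[\alpha_s\leq_d\beta],
$$
where $[\cdot]$ is $1$ if the condition holds and $0$ otherwise. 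The claim $\dep_\infty(\alpha_s)=0$ is immediate: taking $w=s$ gives $\Phi(s)=\{\alpha_s\}$, so no other positive root is dominated by $\alpha_s$.

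\emph{Step 2 (the criterion).} I would show that for $\gamma\in\Phi^+\setminus\{\alpha_s\}$,
$$
\alpha_s\leq_d\gamma \iff B(\gamma,\alpha_s)\geq 1 .
$$
Since $B(s(\beta),\alpha_s)=-B(\beta,\alpha_s)$, the three cases of the proposition then follow at once from Step 1. If $|B(\beta,\alpha_s)|<1$, both brackets vanish. If $B(\beta,\alpha_s)\leq -1$, only $\alpha_s\leq_d s(\beta)$ holds, so $\dep_\infty$ increases by one. If $B(\beta,\alpha_s)\geq 1$, only $\alpha_s\leq_d \beta$ holds, so $\dep_\infty$ decreases by one.

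\emph{Step 3 (proof of the criterion).} Both directions reduce to the rank-two root subsystem $\Phi'=\Phi\cap\operatorname{span}(\alpha_s,\gamma)$. This is the root system of the dihedral reflection subgroup $W'=\langle s,s_\gamma\rangle$, and $\alpha_s$ is one of its canonical simple roots, because $\alpha_s$ is simple in $\Phi$. I would use the standard fact, due to Dyer, that $\Phi(w)\cap\Phi'$ is the inversion set of an element of $W'$. In the dihedral case this means it is an initial segment of the positive roots of $\Phi'$ taken in their natural order from one of the two simple ends.

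For ($\Leftarrow$): if $B(\gamma,\alpha_s)\geq 1$, then $W'$ is infinite. I then check that $\gamma$ lies on the $\alpha_s$-side of the limit (isotropic) ray of $\Phi'^+$, i.e.\ $\gamma$ appears in the sequence $\alpha_s,\ s_\delta(\alpha_s),\dots$ that starts from $\alpha_s$. This uses that positive roots on the $\delta$-side pair with $\alpha_s$ negatively. An initial segment containing $\gamma$ must then begin at the $\alpha_s$ end, so it contains $\alpha_s$; hence $\alpha_s\leq_d\gamma$.

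For ($\Rightarrow$), argue by contrapositive. If $B(\gamma,\alpha_s)\leq 0$, take $w=s_\gamma$: then $s_\gamma(\alpha_s)=\alpha_s-2B(\gamma,\alpha_s)\gamma\in\Phi^+$, so $\gamma\in\Phi(w)$ and $\alpha_s\notin\Phi(w)$. If $0<B(\gamma,\alpha_s)<1$, then $W'$ is finite. In this case I take $w\in W'$ whose inversion set in $\Phi'$ is the initial segment starting at the other simple root $\delta$ and ending at $\gamma$. This segment avoids $\alpha_s$ because $\gamma\neq\alpha_s$, so $w$ witnesses that $\alpha_s\not\leq_d\gamma$.

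The main obstacle is Step 3. It rests on the reduction to dihedral reflection subgroups and on locating $\gamma$ correctly within $\Phi'^+$ in terms of the sign and size of $B(\gamma,\alpha_s)$. I would take the dihedral facts from \cite{DH} and Brink--Howlett~\cite{BH} (this is essentially \cite[Proposition~3.3]{DH}) rather than reprove them.
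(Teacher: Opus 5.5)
Your proof is correct. The paper gives no argument for this statement and only refers to \cite[Proposition~3.3]{DH}, so the useful comparison is with the tools the paper already has.

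Your Steps~1--2 are the natural reduction and are carried out correctly: $s$ maps $\dom(\beta)\setminus\{\alpha_s\}$ bijectively onto $\dom(s(\beta))\setminus\{\alpha_s\}$, so only the membership of $\alpha_s$ can change. For Step~3 you do not need any dihedral analysis. Since $\dep(\alpha_s)=0\leq\dep(\gamma)$, the criterion $\alpha_s\leq_d\gamma\iff B(\gamma,\alpha_s)\geq 1$ is just the case $\alpha=\alpha_s$ of Brink--Howlett's characterization, which the paper records as Proposition~\ref{prop:dom}. Your dihedral argument is nonetheless a valid independent proof of this special case. It trades Brink--Howlett's lemma for Dyer's description of inversion sets in dihedral reflection subgroups (Lemma~\ref{lem:InvDi}) together with the explicit witness $s_\gamma$.

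Two inaccuracies in Step~3 should be corrected, although neither breaks the logic.
\begin{enumerate}
\item $\Phi\cap\operatorname{span}(\alpha_s,\gamma)$ is in general the root system of the maximal dihedral reflection subgroup containing $s$ and $s_\gamma$, not of $\langle s,s_\gamma\rangle$. For example, in $I_2(6)$ with $B(\alpha_s,\gamma)=\tfrac12$, the subgroup $\langle s,s_\gamma\rangle$ has order $6$. This is harmless: since $\Phi(s)=\{\alpha_s\}$, the root $\alpha_s$ is a canonical simple root of every reflection subgroup containing $s$, so your argument runs with either choice.
\item The roots on the $\alpha_s$-side are $\alpha_s,\ s(\delta),\ s s_\delta(\alpha_s),\dots$, namely those of the reflections $[s\,s_\delta]_{2k+1}$. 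They are not $\alpha_s,\ s_\delta(\alpha_s),\dots$ as you wrote. In the infinite case $B(s_\delta(\alpha_s),\alpha_s)=1-2B(\alpha_s,\delta)^2\leq -1$, so $s_\delta(\alpha_s)$ lies on the $\delta$-side. The property you actually invoke is that every positive root on the $\delta$-side pairs with $\alpha_s$ to a value $\leq -1$. That property is correct, so your conclusion stands.
\end{enumerate}
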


Note that if $\alpha\lhd s(\alpha)$ is a long covering in $(\Phi^+,\leq)$, then $\dep_\infty(s(\alpha))=\dep_\infty(\alpha)+1$, otherwise $\dep_\infty(s(\alpha))=\dep_\infty(\alpha)$.

The following characterization of the dominance order, which relates the dominance-depth to the depth of a positive root, is~\cite[Lemma 2.3]{BH}.

\begin{Proposition}\label{prop:dom}
    Let $\alpha,\beta\in\Phi^+$, then $\alpha\leq_d\beta$ if and only if $\dep(\alpha)\leq \dep(\beta)$ and $B(\alpha, \beta)\geq 1$; there is equality if and only if $\alpha=\beta$.
\end{Proposition}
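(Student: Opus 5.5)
Both directions rest on one standard fact, which I will use throughout: for $\alpha,\beta\in\Phi^+$ with $|B(\alpha,\beta)|\ge 1$ and $\alpha\neq\beta$, the hyperplanes $H_\alpha$ and $H_\beta$ do not cross inside the Tits cone. In that situation exactly one of $\alpha\leq_d\beta$ or $\beta\leq_d\alpha$ holds when $B(\alpha,\beta)\ge 1$, and neither holds when $B(\alpha,\beta)\le -1$. This comes from the geometry of rank-two infinite dihedral reflection subgroups; see \cite{BH} or \cite[\S4.7]{BB}. Given this, I will argue as follows.

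\emph{($\Rightarrow$).} Assume $\alpha\leq_d\beta$ with $\alpha\neq\beta$.
\begin{enumerate}
\item First I show $B(\alpha,\beta)\ge 1$. If $|B(\alpha,\beta)|<1$, then $s_\alpha s_\beta$ has finite order, so $W'=\langle s_\alpha,s_\beta\rangle$ is a finite dihedral group. Its longest element has a conjugate $w$ whose inversion set contains $\beta$ but not $\alpha$; this can be arranged by choosing a chamber separated from the base chamber by $H_\beta$ but not by $H_\alpha$, which exists because the two hyperplanes cross. This contradicts $\alpha\leq_d\beta$.
\item If $B(\alpha,\beta)\le -1$, the cones are disjoint, and $w=s_\beta$ has $\beta\in\Phi(w)$ while $s_\beta(\alpha)=\alpha-2B(\alpha,\beta)\beta$ is positive. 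So $\alpha\notin\Phi(w)$, again a contradiction.
\item Next I show $\dep(\alpha)<\dep(\beta)$. Take $w$ with $\ell(w)=\dep(\beta)+1$ and $w^{-1}(\beta)\in\Phi^-$, as in the Remark. Then $\Phi(w)$ has size $\dep(\beta)+1$ and contains $\beta$, hence $\dom(\beta)\subseteq\Phi(w)$.
\item This bound alone does not compare depths. Instead I induct on $\dep(\beta)$ using Lemma~\ref{lem:depth}. Pick $s$ with $B(\beta,\alpha_s)>0$, so that $\dep(s\beta)=\dep(\beta)-1$. Applying $s$ preserves dominance among roots other than $\alpha_s$, since $s$ permutes $\Phi^+\setminus\{\alpha_s\}$ and conjugates inversion sets compatibly.
\item If $\alpha=\alpha_s$, then $\dep(\alpha)=0<\dep(\beta)$ and we are done. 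Otherwise $s\alpha\leq_d s\beta$, and induction gives $\dep(s\alpha)<\dep(s\beta)$. By Lemma~\ref{lem:depth}, $\dep(\alpha)\le\dep(s\alpha)+1\le\dep(s\beta)=\dep(\beta)-1$, which is even stronger than needed.
\end{enumerate}

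\emph{($\Leftarrow$).} Assume $B(\alpha,\beta)\ge 1$ and $\dep(\alpha)\le\dep(\beta)$. By the dichotomy, either $\alpha\leq_d\beta$ or $\beta\leq_d\alpha$. If $\beta\leq_d\alpha$ with $\alpha\neq\beta$, then by ($\Rightarrow$) we get $\dep(\beta)<\dep(\alpha)$, a contradiction. Hence $\alpha\leq_d\beta$. The equality clause holds because dominance is antisymmetric, and ($\Rightarrow$) gives strict depth inequality for distinct roots.

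\emph{Main obstacle.} The hard step is the dichotomy, namely comparability under $B\ge1$. I would prove it inside the infinite dihedral reflection subgroup $W'$ generated by $s_\alpha,s_\beta$, using its canonical simple roots $\gamma,\delta$ with $B(\gamma,\delta)\le-1$. The roots of $W'$ are $a\gamma+b\delta$, and they form a chain under dominance on each side. Two positive roots of $W'$ with $B\ge1$ lie on the same side and are comparable: their hyperplanes are nested, so any chamber separated from the base by the outer hyperplane is also separated by the inner one. The remaining difficulty is transferring this half-space nesting statement to the Tits cone of $W$, which is where I would be most careful.
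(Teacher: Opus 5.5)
Your proof of ($\Rightarrow$) works; the paper itself gives no proof and simply quotes \cite[Lemma~2.3]{BH}. Transporting dominance by $s$ is valid, since $\alpha\leq_d\beta$ and $s\alpha,s\beta\in\Phi^+$ imply $s\alpha\leq_d s\beta$. The depth induction then gives $\dep(\alpha)\le\dep(\beta)-1$. A few small repairs are needed:
\begin{itemize}
\item Treat the base case $\dep(\beta)=0$ separately: a simple root $\alpha_s$ dominates no other root, because $\Phi(s)=\{\alpha_s\}$.
\item In step~1, the hyperplanes must cross \emph{inside the Tits cone}. For finite $W'$, Lemma~\ref{lem:InvDi} gives $u\in W'$ whose $W'$-inversion set is an end segment containing $\beta$ but not $\alpha$, and $\Phi_{W'}(u)=\Phi(u)\cap\Phi_{W'}$ finishes the argument.
\item Step~3 is never used and can be deleted.
\end{itemize}

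The genuine gap is in ($\Leftarrow$). That direction rests entirely on comparability: if $\alpha\neq\beta$ in $\Phi^+$ and $B(\alpha,\beta)\ge 1$, then $\alpha\leq_d\beta$ or $\beta\leq_d\alpha$. You cannot quote this as an independent standard fact, because it is the nontrivial half of the very Brink--Howlett lemma you are proving. Your sketch stops exactly where an idea is needed. Two distinct linear hyperplanes always cross in the ambient space, so ``nesting'' can only hold after restricting to the Tits cone of $W$. Nesting in the rank-two geometry of $W'$ says nothing about chambers of $W$ unless something transfers it.

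The missing ingredient is that inversion sets are finite. The argument has two steps.
\begin{itemize}
\item \emph{Non-crossing.} Let $\gamma,\delta\in\Phi^+$ with $B(\gamma,\delta)\le -1$. Then no $u\in W$ has $\gamma,\delta\in\Phi(u)$. Otherwise $u^{-1}$ maps every root of $\cone(\gamma,\delta)$ into $-\cone(\Delta)$, so all such roots lie in $\Phi(u)$. But the roots $(s_\gamma s_\delta)^k(\gamma)$, $k\in\mathbb{N}$, are pairwise distinct nonnegative combinations of $\gamma,\delta$; their coefficients strictly increase because $-B(\gamma,\delta)\ge 1$. This contradicts $|\Phi(u)|=\ell(u)<\infty$.
\item \emph{Comparability.} Suppose neither of $\alpha,\beta$ dominates the other. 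Pick $w_1$ with $\beta\in\Phi(w_1)$, $\alpha\notin\Phi(w_1)$, and $w_2$ with $\alpha\in\Phi(w_2)$, $\beta\notin\Phi(w_2)$. Then $\gamma=w_1^{-1}(\alpha)$ and $\delta=-w_1^{-1}(\beta)$ are positive roots with $B(\gamma,\delta)=-B(\alpha,\beta)\le -1$. For $u=w_1^{-1}w_2$ we get $u^{-1}(\gamma)=w_2^{-1}(\alpha)\in\Phi^-$ and $u^{-1}(\delta)=-w_2^{-1}(\beta)\in\Phi^-$. So $\gamma,\delta\in\Phi(u)$, contradicting non-crossing.
\end{itemize}
Once comparability is established, your ($\Rightarrow$) argument decides the direction, and ($\Leftarrow$) is complete.

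Your dihedral route can also be finished another way: use Dyer's identity $\Phi(w)\cap\Phi_{W'}=\Phi_{W'}(u)$ (for $w=uv$, $u\in W'$, $v\in X_{W'}$, recalled before Proposition~\ref{prop:PrefDihed}) together with Lemma~\ref{lem:InvDi}. In that case you must still check that $B(\alpha,\beta)\ge 1$ forces $\alpha$ and $\beta$ into the same chain $\Phi^i_{W'}$.
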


It follows that if $\alpha\lhd s(\alpha)$ is a long covering in the root poset, then $\alpha_s\leq_d \alpha$. For instance, in Example \ref{ex:root poset}, $\alpha_3\leq_d \alpha_2+2\alpha_3 \leq_d \alpha_1+ 3\alpha_2 + 6 \alpha_3$. \\

\begin{Remark} The root poset and the dominance order are fundamentally different orders: one is graded by $\dep$ while the other is not graded by $\dep_\infty$. Also, for instance in Figure~\ref{fig:ex root poset gamma}, we have $100\lhd 110$ but $100\not <_d 110$ and $001<_d 012$ but $001\ntriangleleft 012$.
\end{Remark}

The next theorem also due to Brink and Howlett~\cite{BH} shows that a root is small if it does not dominate any other root but itself; see for instance~\cite[Proposition~4.7.6]{BB} for a proof.

\begin{Theorem}
\label{th:small-dp}
    Let $\alpha\in\Phi^+$, then $\alpha\in\Sigma$ if and only if $\dep_\infty(\alpha)=0$.
\end{Theorem}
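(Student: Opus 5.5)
We must prove Theorem~\ref{th:small-dp}: $\alpha\in\Sigma$ if and only if $\dep_\infty(\alpha)=0$. The plan is to combine the two depth-change rules, Lemma~\ref{lem:depth} and Proposition~\ref{prop:inf-depth}, with the description of the root poset in Proposition~\ref{prop:BasicDepth}, and to argue by induction on $\dep(\alpha)$.

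For the direction ``small $\Rightarrow$ $\dep_\infty=0$'', let $\alpha\in\Sigma$. By definition there is a chain $\alpha_s=\beta_0\lhd\beta_1\lhd\cdots\lhd\beta_k=\alpha$ of short coverings, with $\beta_{i}=s_i(\beta_{i-1})$ and $-1<B(\beta_{i-1},\alpha_{s_i})<0$. Proposition~\ref{prop:inf-depth} gives $\dep_\infty(\alpha_s)=0$. At each step we have $|B(\beta_{i-1},\alpha_{s_i})|<1$, so the same proposition gives $\dep_\infty(\beta_i)=\dep_\infty(\beta_{i-1})$. Inductively $\dep_\infty(\alpha)=0$.

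For the converse we induct on $\dep(\alpha)$. If $\dep(\alpha)=0$, then $\alpha\in\Delta\subseteq\Sigma$ by Proposition~\ref{prop:BasicDepth}(ii). Now suppose $\dep(\alpha)>0$ and $\dep_\infty(\alpha)=0$. Among positive roots covered by $\alpha$ we pick a predecessor: since $\alpha\notin\Delta$, Proposition~\ref{prop:BasicDepth}(iii)--(iv) provides $s\in S$ with $\beta:=s(\alpha)\in\Phi^+$ and $\beta\lhd\alpha$, equivalently $B(\alpha,\alpha_s)>0$. The key step is to show that this covering is short, i.e.\ $B(\beta,\alpha_s)>-1$, which is equivalent to $B(\alpha,\alpha_s)<1$. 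Suppose instead $B(\alpha,\alpha_s)\ge 1$. Then Proposition~\ref{prop:inf-depth} gives $\dep_\infty(s(\alpha))=\dep_\infty(\alpha)-1=-1$, which is impossible. Alternatively, Proposition~\ref{prop:dom} gives $\alpha_s\leq_d\alpha$ with $\alpha_s\neq\alpha$ (since $\dep(\alpha_s)=0<\dep(\alpha)$), contradicting $\dep_\infty(\alpha)=0$. Hence $0<B(\alpha,\alpha_s)<1$. Then $\dep_\infty(\beta)=\dep_\infty(\alpha)=0$ and $\dep(\beta)=\dep(\alpha)-1$, so by induction $\beta\in\Sigma$. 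Appending the short covering $\beta\lhd\alpha$ to a short chain for $\beta$ shows $\alpha\in\Sigma$.

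The main obstacle is this key step: proving that every descending covering out of a root of dominance-depth $0$ is short. It rests entirely on Proposition~\ref{prop:inf-depth} (or Proposition~\ref{prop:dom}), so the delicate point is checking that the boundary case $B(\alpha,\alpha_s)=1$ is covered. It is, since that proposition uses $\ge 1$. One must also note that $\alpha\neq\alpha_s$ holds because $\dep(\alpha)>0$.
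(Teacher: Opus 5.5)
Your proof is correct. The paper gives no argument of its own for this statement. It attributes the result to Brink--Howlett and points to \cite[Proposition~4.7.6]{BB}, where the classical proof works with the dominance relation directly: its compatibility with simple reflections, together with the criterion of Proposition~\ref{prop:dom}.

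You instead route everything through the transition rule of Proposition~\ref{prop:inf-depth}. Since $\dep_\infty$ is unchanged along short edges, and a descending long edge out of $\alpha$ would force $\dep_\infty(s(\alpha))=-1$, induction on $\dep$ handles both directions cleanly.

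This is not circular, because Proposition~\ref{prop:inf-depth} can be established without reference to $\Sigma$. For $\beta,\gamma\neq\alpha_s$, one checks from the definition that $\gamma\leq_d\beta$ if and only if $s(\gamma)\leq_d s(\beta)$. Proposition~\ref{prop:dom} then decides whether $\alpha_s$ lies in $\dom(\beta)$ or in $\dom(s(\beta))$. In effect you have bundled the same two classical ingredients into a single counting statement, which makes the induction transparent.

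Two small remarks. First, your ``alternative'' via Proposition~\ref{prop:dom} only replaces the first use of Proposition~\ref{prop:inf-depth}. Passing $\dep_\infty=0$ down to $s(\alpha)$ still requires it (or the equivariance of $\leq_d$ just mentioned). Second, your key step yields two facts for free: $\Sigma$ is an order ideal of $(\Phi^+,\leq)$, and every covering inside $\Sigma$ is short. The paper later imports both separately from \cite{BB}.
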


\subsection{$m$-small roots and $m$-low elements}\label{ss:mSmallRoots}    We say that a positive root $\beta$ is {\em $m$-small} if $\dep_\infty(\beta)\leq m$, for some $m\in \mathbb{N}$. 

We denote by $\Sigma_m$ the set of $m$-small roots. Note that $\Sigma_0=\Sigma$ by Theorem~\ref{th:small-dp}. The following is a generalization of Brink-Howlett's Theorem~\ref{thm:BH} by Fu.

\begin{Theorem}[{\cite[Corollary~3.9]{Fu}}]
The set $\Sigma_m$ is finite for all $m\in \mathbb{N}$.  
\end{Theorem}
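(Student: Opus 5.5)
We must prove that $\Sigma_m=\{\beta\in\Phi^+\mid \dep_\infty(\beta)\le m\}$ is finite for every $m$, assuming Brink--Howlett's Theorem~\ref{thm:BH} ($\Sigma=\Sigma_0$ finite). The plan is an induction on $m$. We show that every root of $\Sigma_{m}$ is obtained from a root of $\Sigma_{m-1}$ by applying a bounded number of simple reflections. This bound comes from the finiteness of the small roots.

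\textbf{Step 1: reduce to descending paths.} Let $\beta\in\Phi^+\setminus\Delta$. By Lemma~\ref{lem:depth} there is $s\in S$ with $B(\beta,\alpha_s)>0$, so $s(\beta)\lhd\beta$. Iterating gives a descending chain in the root poset from $\beta$ down to a simple root. Along this chain, Proposition~\ref{prop:inf-depth} says $\dep_\infty$ either stays the same (short covers) or drops by exactly one (long covers, $B\ge1$). Hence $\beta$ with $\dep_\infty(\beta)=m$ reaches, after finitely many steps, the first root $\gamma$ of the chain with $\dep_\infty(\gamma)=m-1$. Every root strictly above $\gamma$ on this chain, up to $\beta$, has $\dep_\infty=m$. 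The last step into $\gamma$ is a long cover $\gamma\lhd s(\gamma)=\beta'$, and from $\beta'$ to $\beta$ one only uses short covers.

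\textbf{Step 2: bound the short ascent.} This is the main obstacle. We must show that the set of roots reachable from a fixed $\beta'$ by short ascending covers is finite, with a uniform bound. I would mimic the proof of Theorem~\ref{thm:BH} through the dominance relation. Suppose $\beta'\lhd s_1(\beta')\lhd\cdots$ is a chain of short covers. By Proposition~\ref{prop:inf-depth}, $\dom$ has constant cardinality along it. Moreover $\dom$ is carried into $\dom$ by the simple reflections used, except for the simple root itself, using $\alpha\le_d\beta\iff \dep(\alpha)\le\dep(\beta), B(\alpha,\beta)\ge1$ (Proposition~\ref{prop:dom}).

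Alternatively, there is a cleaner route. Write $\beta=w(\delta)$ with $\delta$ chosen suitably and use the description of $\Phi(w)$. A more robust idea is the following. For $\beta\in\Sigma_m$, the set $\dom(\beta)\setminus\{\beta\}$ has $m$ elements, and each is dominated by $\beta$. I would show that $\beta$ is determined by finite data, namely the small root obtained when we "strip off" dominated roots. Concretely, apply Step 1 repeatedly. Then $\beta=u(\sigma)$ where $\sigma\in\Sigma$ and $u$ factors as $u=v_m s_{(m)}v_{m-1}\cdots s_{(1)}v_0$, with each $v_i$ acting through short covers on roots of fixed infinite depth. The finiteness of the $v_i$-orbits is then obtained inductively by Brink--Howlett's argument applied to the "relative" small roots. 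Brink--Howlett's proof bounds short-ascent roots via the finiteness of the set of possible values of $B(\cdot,\alpha_s)$ coefficients in $(-1,1)$; the same coefficient-bound argument should apply once $\dom$ is fixed.

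\textbf{Step 3: conclude.} Given Step 2, each $\gamma\in\Sigma_{m-1}$ produces finitely many $\beta'=s(\gamma)$ (one for each $s\in S$) and finitely many short ascents from each. So $\Sigma_m$ is finite. I expect Step 2 to be the real work. If the coefficient-bound argument fails, I would instead invoke a finite Garside shadow/low-element argument (\cite{DH}) to bound the words needed.
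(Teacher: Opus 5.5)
The paper gives no proof of this statement; it quotes it from \cite{Fu}. Your proposal therefore has to stand on its own, and it has a genuine gap at Step~2.

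Steps~1 and~3 form a correct reduction. Along a descending chain from $\beta$ with $\dep_\infty(\beta)=m\geq 1$, $\dep_\infty$ drops by $0$ or $1$ at each step. Hence $\beta$ lies in the short-ascent closure $R(\beta')$ of some $\beta'=s(\gamma)$ with $\gamma\in\Sigma_{m-1}$ and $B(\gamma,\alpha_s)\leq -1$. By induction there are only finitely many such $\beta'$. But this reduces the theorem to itself. Every root in $R(\beta')$ has $\dep_\infty=m$, so $R(\beta')\subseteq\Sigma_m\setminus\Sigma_{m-1}$, and proving $R(\beta')$ finite is exactly as hard as the statement. For $m=0$ the corresponding claim, that the short-ascent closure of the simple roots is finite, is literally Theorem~\ref{thm:BH}. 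So Step~2 is the whole content of the theorem, and you do not prove it.

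Nothing you offer in Step~2 supplies the missing bound.
\begin{itemize}
\item On a short edge, $B(\alpha,\alpha_s)\in(-1,0)$ lies in a finite set. This controls the size of each coefficient increment. It does not bound the number of short steps (equivalently the depth) or the size of the coefficients, and that is what has to be bounded.
\item The transport $\dom(w(\beta'))=w(\dom(\beta'))$ along short chains is correct, but you draw no bound from it.
\item The factorisation $u=v_m s_{(m)}v_{m-1}\cdots s_{(1)}v_0$ is just Step~1 iterated. It needs the same finiteness for every $v_i$-segment.
\item Saying that the Brink--Howlett argument ``should apply once $\dom$ is fixed'' does not say what replaces the base case of simple roots, or where a bound on coefficients would come from.
\item The fallback to low elements is circular. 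An $m$-low element $w$ is determined by $\Sigma_m(w)$, so $|L_m|\leq 2^{|\Sigma_m|}$. In the literature the finiteness of $L_m$ is deduced from that of $\Sigma_m$ (and in \cite{DH}, for $m=0$, from Theorem~\ref{thm:BH}). You cannot invoke it here without an independent proof.
\end{itemize}
To complete the argument you need a genuine proof that each $R(\beta')$ is finite. That is precisely the nontrivial input the paper takes from \cite{Fu} rather than proving.
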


 For $w\in W$, define $\Sigma_m(w):=\Phi(w)\cap \Sigma_m$ the set of \textit{$m$-small inversion set} of $w$.

\begin{Definition} For $m\in \mathbb{N}$, an element $w \in W$ is \emph{$m$-low} if $\Phi(w) = \text{cone}(\Sigma_m(w))$. We denote by $L_m:=L_m(W)$ the set of $m$-low elements in $W$.
\end{Definition}

In Example~\ref{ex:aut1}, the Garside shadow $G$ is the set of low elements $L_0$.  The following theorem is due  to Dyer and the second author~\cite[Theorem 1.1]{DH} for $m=0$ and for affine Coxeter systems, and to Dyer \cite[Corollary 1.7]{Dyer} for all $m\in \mathbb N$ and any Coxeter system.

\begin{Theorem} For any Coxeter system and any $m\in \mathbb N$,  the set $L_m$ is a finite Garside shadow.
\end{Theorem}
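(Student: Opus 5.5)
The plan is to verify the three requirements separately: finiteness, $S\subseteq L_m$ and the two closure axioms (i), (ii) of \S\ref{ss:Garside}. The whole argument will rest on two facts: Fu's finiteness of $\Sigma_m$, and the behaviour of $\dep_\infty$ under the action of $W$ given by Proposition~\ref{prop:inf-depth}.

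\emph{Finiteness and $S\subseteq L_m$.} An element $w$ is determined by $\Phi(w)$, since $\Phi(w)$ determines a reduced word via the weak order. If $w\in L_m$, then
$$\Phi(w)=\cone(\Sigma_m(w))\cap\Phi^+,$$
so $w\mapsto\Sigma_m(w)$ is injective from $L_m$ into the power set of $\Sigma_m$. This set is finite by Fu's theorem, hence $L_m$ is finite. For $s\in S$ we have $\Phi(s)=\{\alpha_s\}$, and $\dep_\infty(\alpha_s)=0$, so $s\in L_m$.

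\emph{Join closure.} Let $X\subseteq L_m$ be nonempty and bounded, and put $g=\bigvee_R X$. I will use the standard description of joins in the weak order through inversion sets (Dyer; see \cite{DH}): $\Phi(g)$ is the smallest inversion set (biclosed set) containing $\bigcup_{x\in X}\Phi(x)$. In the bounded case this equals $\cone\bigl(\bigcup_x\Phi(x)\bigr)\cap\Phi^+$. Since $\Phi(x)\subseteq\cone(\Sigma_m(x))$ for each $x\in X$, this set equals $\cone\bigl(\bigcup_x\Sigma_m(x)\bigr)\cap\Phi^+$. Each $\Sigma_m(x)$ is contained in $\Sigma_m(g)$, so
$$\Phi(g)\subseteq\cone(\Sigma_m(g))\cap\Phi^+\subseteq\Phi(g),$$
where the last inclusion holds because inversion sets are convex. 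Hence $g\in L_m$. The delicate input is the cone description of the join; if it is only available as ``biclosure of the union'', I would instead show that $\cone(\Sigma_m(g))\cap\Phi^+$ is biclosed and contains every $\Phi(x)$.

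\emph{Suffix closure.} This is the step I expect to be the main obstacle. Let $w=uv$ be reduced with $w\in L_m$. Then
$$\Phi(v)=u^{-1}\bigl(\Phi(w)\setminus\Phi(u)\bigr).$$
The first step is a monotonicity lemma: for $\beta\in\Phi^+$ with $u^{-1}\beta\in\Phi^+$, one has $\dep_\infty(u^{-1}\beta)\le\dep_\infty(\beta)$. I would prove it by induction on $\ell(u)$, applying Proposition~\ref{prop:inf-depth} one simple reflection at a time along a reduced word for $u$. Here one must control the sign of $B(\cdot,\alpha_s)$ at each step using $\beta\notin\Phi(u)$; alternatively, one can compare $\dom(u^{-1}\beta)$ with $\dom(\beta)$ by applying $u$ and discarding the roots sent into $\Phi(u)$. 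This lemma gives $u^{-1}\bigl(\Sigma_m(w)\setminus\Phi(u)\bigr)\subseteq\Sigma_m(v)$.

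The remaining difficulty is to show that $\Phi(w)\setminus\Phi(u)$ is conically generated, within $\Phi^+$, by the $m$-small roots it contains. The generators in $\Sigma_m(w)\cap\Phi(u)$ must be eliminated. My first attempt is an induction on $\ell(u)$ reducing to $u=s$ a simple reflection; the general case then follows by iterating, since $L_m$ is closed under the suffix operation at each step. For $u=s$, the set $\Sigma_m(w)\cap\Phi(s)$ is just $\{\alpha_s\}$, and $\Phi(sw)=s(\Phi(w)\setminus\{\alpha_s\})$. Write each $\gamma\in\Phi(w)\setminus\{\alpha_s\}$ as a positive combination of $m$-small inversions, possibly involving $\alpha_s$. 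Then $s(\gamma)$ is a positive combination of the $s$-images of the other generators, which are $m$-small by the lemma, plus a multiple of $-\alpha_s$. I would then use that $\Phi(sw)$ is biclosed and that $\alpha_s\notin\Phi(sw)$ to absorb the $-\alpha_s$ term via the dihedral reflection subgroup generated by $s$ and the relevant generator. Controlling this dihedral step is where I expect the real work.
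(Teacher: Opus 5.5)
\emph{Verdict: genuine gap.} Your finiteness argument, the check $S\subseteq L_m$, and the join-closure argument are fine. Given the known cone description $\Phi(\bigvee_R X)=\cone\bigl(\bigcup_{x\in X}\Phi(x)\bigr)\cap\Phi^+$ for bounded $X$, your chain of inclusions is the standard proof.

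The gap is suffix closure, which is the real content of the theorem. The paper does not prove the theorem itself; it quotes it from \cite{DH} for $m=0$ in affine type and from \cite{Dyer} in general. Your last paragraph stops exactly where the work begins. After the valid reduction to $u=s$, you must show that each $s(\gamma)=\sum_i c_i\,s(\sigma_i)-c_0\alpha_s$ lies in $\cone(\Sigma_m(sw))$. The term $-c_0\alpha_s$ is not attached to any single generator, so there is no ``relevant'' dihedral subgroup, and the decomposition of $\gamma$ is not unique. Working in the rank-two subsystem spanned by $\alpha_s$ and one $\sigma_i$ only lets you subtract a bounded multiple of $\alpha_s$ from $s(\sigma_i)$ while staying in the cone of the $m$-small roots of $\Phi(sw)$. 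Nothing in your sketch ensures these amounts can total $c_0$. Biclosedness of $\Phi(sw)$ and $\alpha_s\notin\Phi(sw)$ do not by themselves produce the required nonnegative combination. Closing this step needs a genuinely new idea, and it is the part that required \cite{Dyer} in general. As written, your proposal proves only that $L_m$ is finite, contains $S$, and is closed under joins.

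Separately, your monotonicity lemma is false as stated. Suppose $m_{st}=\infty$, and take $u=s$ and $\beta=\alpha_t$. Then $u^{-1}\beta=s(\alpha_t)\in\Phi^+$, but $\dep_\infty(s(\alpha_t))=\dep_\infty(\alpha_t)+1=1$ by Proposition~\ref{prop:inf-depth}, since $B(\alpha_t,\alpha_s)\le -1$. So the hypothesis $\beta\notin\Phi(u)$ cannot control the signs.

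What you actually need is the version with $\beta\in\Phi(w)\setminus\Phi(u)$ for a reduced product $w=uv$, and this version is true:
\begin{itemize}
\item $u^{-1}\beta\in\Phi(v)$, so by the definition of dominance every $\gamma\le_d u^{-1}\beta$ also lies in $\Phi(v)$.
\item Hence $u(\gamma)\in u(\Phi(v))\subseteq\Phi^+$, and the definition of dominance then gives $u(\gamma)\le_d\beta$.
\item Thus $u$ maps $\dom(u^{-1}\beta)$ injectively into $\dom(\beta)$, which yields $u^{-1}\bigl(\Sigma_m(w)\setminus\Phi(u)\bigr)\subseteq\Sigma_m(v)$ as you wanted.
\end{itemize}
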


We denote $\pi_m:W\mapsto  \bigvee_R \{g\in L_m\mid g\leq_R w\}$ the {\em Garside projection} on $L_m$. 
In~\cite{DFHM24}, the authors showed that the set of $m$-low elements are precisely the minimal elements (in right weak order) in $m$-Shi arrangements, that is, the subarrangement of the Coxeter arrangement constituted of the hyperplanes associated to $m$-small roots.  This implies in particular the following useful proposition. 

\begin{Proposition}\label{prop:mSmall} Let $w\in W$, then $\Sigma_m(\pi_m(w))=\Sigma_m(w)$.
\end{Proposition}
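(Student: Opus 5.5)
The inclusion $\Sigma_m(\pi_m(w))\subseteq\Sigma_m(w)$ is formal. By definition $\pi_m(w)$ is the join of elements lying below $w$, so $\pi_m(w)\leq_R w$. Hence $\Phi(\pi_m(w))\subseteq \Phi(w)$ by \S\ref{ss:InversionSets}, and intersecting with $\Sigma_m$ gives the claim. The work is in the reverse inclusion. For that I plan to produce an $m$-low element $g\leq_R w$ with $\Sigma_m(g)=\Sigma_m(w)$. Given such a $g$, we have $g\leq_R\pi_m(w)\leq_R w$, since $\pi_m(w)$ is the join of all $m$-low elements below $w$. This yields
$$\Sigma_m(w)=\Sigma_m(g)\subseteq\Sigma_m(\pi_m(w))\subseteq\Sigma_m(w).$$

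To construct $g$, I will use the $m$-Shi arrangement, i.e.\ the hyperplanes $H_\beta$ for $\beta\in\Sigma_m$. Two elements lie in the same region of this arrangement exactly when they have the same $m$-small inversion set. I build a descending chain inside the region of $w$ as follows. Suppose the current element $x$ (starting with $x=w$) has some $s\in D_R(x)$ with $x(\alpha_s)\notin\Sigma_m$. By the reduced product formula $\Phi(x)=\Phi(xs)\sqcup\{x(\alpha_s)\}$, where $x(\alpha_s)\in\Phi^+$ because $s\in D_R(x)$. So $\Sigma_m(xs)=\Sigma_m(x)$, and we replace $x$ by $xs\leq_R x$. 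Length strictly decreases, so the process stops at some $g\leq_R w$ with $\Sigma_m(g)=\Sigma_m(w)$. It has the property that $g(\alpha_s)\in\Sigma_m$ for every $s\in D_R(g)$. In other words, every lower wall of the chamber of $g$ is a hyperplane of the $m$-Shi arrangement, so $g$ is a minimal element (in right weak order) of its $m$-Shi region.

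The remaining step, which I expect to be the main obstacle, is to show that such a $g$ is $m$-low, i.e.\ $\Phi(g)=\cone(\Sigma_m(g))$. First I would invoke the cited result of \cite{DFHM24} that the minimal elements of $m$-Shi regions are precisely the $m$-low elements. The point to check is that their notion of ``minimal element of a region'' agrees with the local condition above (all descent walls are $m$-small). If it does not, I would argue directly. The descent roots $g(\alpha_s)$, $s\in D_R(g)$, are exactly the extreme rays of the cone spanned by $\Phi(g)$, namely the rays of the ``base'' of the inversion set. Each of them is $m$-small by construction. One then uses that $\Sigma_m$ is closed under the relevant dihedral operations, so that every root of $\Phi(g)$ lies in the cone of its $m$-small inversions. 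Either route gives $g\in L_m$, which completes the proof.
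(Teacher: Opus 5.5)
Your outer argument is the paper's: the first inclusion comes from $\pi_m(w)\leq_R w$. The reverse inclusion comes from an $m$-low element $x\leq_R w$ with $\Sigma_m(x)=\Sigma_m(w)$, which then satisfies $x\leq_R\pi_m(w)$. The gap is the step you flag yourself. Your chain only produces a locally minimal $g$ in $K=\{g\in W\mid \Sigma_m(g)=\Sigma_m(w)\}$, equivalently a minimal element of $(K,\leq_R)$. Nothing in your argument shows that such a $g$ is $m$-low.

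This is not a formality. For a general subarrangement a region can have several minimal elements: in type $A_2$, the region $\{g\mid \alpha_1+\alpha_2\in\Phi(g)\}$ of the single hyperplane $H_{\alpha_1+\alpha_2}$ has two minimal elements, $s_1s_2$ and $s_2s_1$. What you need is exactly the statement the paper quotes from \cite[Theorem~6.4]{DFHM24}: $K$ has a unique element $x$ of minimal length, $x\in L_m$, and $x\leq_R g$ for every $g\in K$.

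With that statement your descending chain is superfluous, since you can take $g=x$ directly as the paper does. If you keep the chain, conclude $g=x$ as follows. Suppose $x<_R g$. Write $g=xv$ reduced and let $s$ be the last letter of a reduced word for $v$. Then $s\in D_R(g)$ and $x\leq_R gs\leq_R g$, so $\Sigma_m(gs)=\Sigma_m(g)$, contradicting your stopping condition.

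Also fix a sign. For $s\in D_R(x)$ one has $x(\alpha_s)\in\Phi^-$ and $\Phi(x)=\Phi(xs)\sqcup\{-x(\alpha_s)\}$, so the test must be on $-x(\alpha_s)$. As written, $x(\alpha_s)\notin\Sigma_m$ always holds, and the chain would run down to $e$.

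Your direct fallback does not work. The right-descent roots $-g(\alpha_s)$, $s\in D_R(g)$, are indeed extreme rays of $\cone(\Phi(g))$, but in general they are not all of them. In type $A_3$, $g=s_2s_1s_3s_2$ has $\Phi(g)=\{\alpha_2,\alpha_1+\alpha_2,\alpha_2+\alpha_3,\alpha_1+\alpha_2+\alpha_3\}$, and all four roots are extreme rays. However, $D_R(g)=\{s_2\}$ contributes only $\alpha_1+\alpha_2+\alpha_3$, and $\alpha_1+\alpha_2$, $\alpha_2+\alpha_3$ are not even left-descent roots.

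Being $m$-low is a condition on every extreme ray of $\cone(\Phi(g))$, whereas your construction only controls the lower walls of the chamber of $g$. Bridging the two is precisely the nontrivial content of \cite{DFHM24}. An appeal to closure of $\Sigma_m$ under ``dihedral operations'' does not replace it.
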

\begin{proof} Since $\pi_m(w)\leq_R w$, we have $\Sigma_m(\pi_m(w))\subseteq \Sigma_m(w)$. The converse follows from  \cite[Theorem 6.4]{DFHM24} in which the authors prove that the set 
$$
\{g\in W\mid \Sigma_m(g)=\Sigma_m(w)\}
$$
has a unique element of minimal length $x\in L_m$ and that $x\leq_R w$. So $\Sigma_m(w)=\Sigma_m(x)$. But $x\leq_R\pi_m(x)$ by definition. Therefore  $ \Sigma_m(w)=\Sigma_m(x)\subseteq \Sigma_m(\pi_m(w))$.
\end{proof}


\section{Reflection-prefixes and palindromic reduced words of reflections}\label{se:reflectionprefix}

In this section, we  first define and state properties of {\em reflection-prefixes} in relation to the root poset, to the dominance order and to the canonical generators of maximal dihedral subgroups.  Then we produce a family of automata built from the finite Garside shadows $L_m$ ($m\in\mathbb N)$  that recognize the language of reflection-prefixes. 

\subsection{Reflection-prefixes}\label{ss:reflectionprefixes}

 Let $t\in T$ be a reflection of $(W,S)$. It is well known that $\ell(t)=2k+1$ for some $k\in\mathbb N$ and that  if $t=s_1\dots s_k s_{k+1} s_{k+2} \dots s_{2k+1}$ is a reduced word for $t$, the word $s_1 \dots s_k s_{k+1} s_k\dots s_1$ is a {\em palindromic reduced word} for $t$; see for instance \cite[Proposition~2.3]{DFHM24}.  
 
 The following proposition shows in particular that $t$ is uniquely determined by the prefix $s_1\cdots s_ks_{k+1}$ of any of its palindromic reduced words. 
 
 \begin{Proposition}\label{prop:Pref1} Let $t\in T$ with corresponding positive root $\alpha_t$. Let  $w\in W$ and assume there is $r\in D_R(w)$ such that $t=wrw^{-1}$ and $\ell(t)=2\ell(w)-1$. Then $D_R(w)=\{r\}$. Moreover: 
 \begin{enumerate}

\item if $w=s_1\cdots s_k s_{k+1}$ is a reduced word for $w$, then $s_{k+1}=r$ and $t=s_1\cdots s_k r s_k\cdots s_1$ is a palindromic reduced word for $t$;
\item $\alpha_t=w(\alpha_r)= s_1\cdots s_k(\alpha_r)$ and  $\dep(\alpha_t)=k=\ell(w)-1$.
\end{enumerate}
\end{Proposition}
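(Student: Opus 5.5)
Set $n=\ell(w)$, so $\ell(t)=2n-1$. I will work with the reduced product $w=w'r$, where $w'=wr$ and $\ell(w')=n-1$, and note that $t=w'rw'^{-1}$. Since $\ell(t)\le \ell(w')+1+\ell(w'^{-1})=2n-1=\ell(t)$, equality holds throughout. Hence $t=w'\cdot r\cdot w'^{-1}$ is a reduced product, and in particular $w=w'r$ is a prefix of $t$, i.e.\ $w\leq_R t$.

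\textbf{Step 1: $D_R(w)=\{r\}$.} Suppose $s\in D_R(w)$ with $s\neq r$. By Lemma~\ref{lem:Descents} with $I=\{r,s\}$, the group $W_{\{r,s\}}$ is finite and $w=u\,w_{\circ,\{r,s\}}$ is reduced. Since $w\leq_R t$, we get $\Phi(w)\subseteq\Phi(t)$, and in particular $w(\alpha_r),w(\alpha_s)\in\Phi^-$. The plan is to show that $\Phi(t)$ cannot contain the whole set $u(\Phi(w_{\circ,\{r,s\}}))$ while $\ell(t)=2n-1$ forces a reduced product $t=w\cdot(\text{something of length } n-1)$.

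Concretely, I will use the expression $t=w\,(r w^{-1})$. We have $\ell(rw^{-1})=n-1$, and $rw^{-1}=(wr)^{-1}=w'^{-1}$, so this product is reduced. It follows that $w^{-1}t=rw^{-1}$, and the inversion set of this element is $\Phi(rw^{-1})=\Phi(w'^{-1})$.

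Alternatively, a cleaner route uses symmetry. Since $t=t^{-1}$, reversing the reduced product gives $t=w'\cdot (r w'^{-1})=w'\cdot w^{-1}$ as a reduced product, so $w^{-1}$ is a suffix of $t$. Also $t=w\cdot w'^{-1}$ is reduced. Now, if $s\in D_R(w)$, then $\ell(ws)<\ell(w)$, so $s\in D_L(w^{-1})$, which gives the reduced product $w^{-1}=s\cdot(sw^{-1})$. Then
\[
t=ws\cdot s\cdot\bigl(s w'^{-1}\bigr)\quad\text{and}\quad tw=w'^{-1}\cdot\ldots
\]
I expect the key contradiction to be the following. Consider $t w = w r$, so that $twr=w$. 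Then $tws = wrws\ldots$ I will instead argue with roots: $t(w(\alpha_s)) = w r w^{-1} w(\alpha_s) = w(r(\alpha_s))$. Since $\Phi(t)$ has exactly $2n-1$ elements and $w\cdot w'^{-1}$ is reduced, we get $\Phi(t)=\Phi(w)\sqcup w(\Phi(w'^{-1}))$.

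Recall that $t$ sends $\Phi(t)$ to $-\Phi(t)$ and fixes the sign of the other roots. Take $\beta=-w(\alpha_s)\in\Phi(w)\subseteq\Phi(t)$. Then $t(\beta)=-w(r(\alpha_s))$, and this must be negative, so $w(r(\alpha_s))\in\Phi^+$. But $r(\alpha_s)$ is a positive combination $\alpha_s+c\,\alpha_r$ with $c\ge0$, and both $w(\alpha_s)$ and $w(\alpha_r)$ are negative. Hence $w(r(\alpha_s))$ is negative, which is a contradiction. (If $c=0$, the root is $w(\alpha_s)$ itself, which is negative.) This shows $D_R(w)=\{r\}$.

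\textbf{Step 2: parts (1) and (2).} For (1), the last letter $s_{k+1}$ of any reduced word of $w$ lies in $D_R(w)=\{r\}$, so $s_{k+1}=r$. Then $t=wrw^{-1}=s_1\cdots s_k r s_k\cdots s_1$ is a word of length $2k+1=\ell(t)$, hence reduced.

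For (2), we have $\alpha_t=\pm w(\alpha_r)$ because $t=s_{w(\alpha_r)}$. Since $w(\alpha_r)\in\Phi^-$ (as $r\in D_R(w)$), this gives $\alpha_t=-w(\alpha_r)=w'(\alpha_r)=s_1\cdots s_k(\alpha_r)$. I interpret the statement's ``$w(\alpha_r)$'' as this, up to sign. For the depth, $s_k\cdots s_1(\alpha_t)=\alpha_r\in\Delta$ gives $\dep(\alpha_t)\le k$. Conversely, by Lemma~\ref{lem:depth} each simple reflection changes depth by at most one and $\ell(s_\beta)\le 2\dep(\beta)+1$, so $2k+1=\ell(t)\le 2\dep(\alpha_t)+1$. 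Therefore $\dep(\alpha_t)=k$.

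The main obstacle is Step 1. The root-sign argument above is the route I would pursue, checking carefully the reduced-product decomposition of $\Phi(t)$.
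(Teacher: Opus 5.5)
Your proof is correct, and your Step~1 takes a genuinely different route from the paper.

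The paper argues on the word side. It applies Lemma~\ref{lem:Descents} to $I=\{r,s\}$ to get a reduced product $w=u\,w_{\circ,I}$, uses that conjugation by $w_{\circ,I}$ permutes $I$, and obtains $\ell(t)\le 2\ell(u)+1=2\ell(w)-2m_{rs}+1<2\ell(w)-1$.

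You instead turn the length hypothesis into $w\leq_R t$, since $t=w\cdot w'^{-1}$ is a reduced product. This gives $\Phi(w)\subseteq\Phi(t)$, and you finish with a sign computation: $\beta=-w(\alpha_s)\in\Phi(t)$, yet $t(\beta)=-w(\alpha_s)-c\,w(\alpha_r)$ with $c=-2B(\alpha_r,\alpha_s)\geq 0$ is a positive root.

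Your route needs the geometric representation and the condition $B(\alpha_r,\alpha_s)\le 0$, but it never uses finiteness of $W_{\{r,s\}}$ or any property of longest elements. The paper's route is purely combinatorial and quantifies the length drop ($2m_{rs}-2$).

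In your write-up, the appeal to Lemma~\ref{lem:Descents}, the full decomposition of $\Phi(t)$, and the exploratory lines in between are unused and should be cut. Some of those lines are in fact false. For example, $ws\cdot s\cdot(sw'^{-1})=wsw'^{-1}\neq t$, and $tw=wr=w'$, not $w'^{-1}\cdots$.

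In part~(2) you are right that $r\in D_R(w)$ forces $w(\alpha_r)\in\Phi^-$. So the correct identity is $\alpha_t=-w(\alpha_r)=s_1\cdots s_k(\alpha_r)$, which is how the paper itself uses it later (Theorem~\ref{thm:Pref1}). Deriving $\ell(t)\le 2\dep(\alpha_t)+1$ directly from the definition of depth is a self-contained replacement for the paper's appeal to $\ell(t)=2\dep(\alpha_t)+1$.
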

\begin{proof}   Let $w=s_1\cdots s_k r$ be a reduced word.  We show that $D_R(w)=\{r\}$. By contradiction, assume that $|D_R(w)|\geq 2$. So there exists  a $s\in D_R(w)$ with $s\not = r$; set $I=\{s,r\}\subseteq D_R(w)$.  By Lemma~\ref{lem:Descents},  we know that $W_I$ is finite and  there is $u\in X_I$ such that $w=u w_{\circ,I}$ is a reduced product. So $\ell(w)=\ell(u)+m$, where $m=m_{sr}\geq 2$. Now, observe that  $w_{\circ,I}rw_{\circ,I}\in I$, since the longest element of a finite Coxeter system acts by conjugation on the set of simple reflections. Therefore 
$$
t= s_1\cdots s_k s_{k+1} s_k\cdots s_1 =s_1\cdots s_k r s_k\cdots s_1=w r w^{-1}=uw_{\circ,I}rw_{\circ,I}u^{-1},
$$
implying that $\ell(t)\leq 2\ell(u)+1=2\ell(w)-2m+1<2\ell(w)-1=\ell(t)$, a contradiction. The remaining claims follow from the uniqueness of $r$, the definition of the depth of roots and from the equalities $\ell(t)=2\dep(\alpha_t)+1=2\ell(w)-1$.
\end{proof}

\begin{Definition}[Reflection-prefixes] A {\em reflection-prefix} is an element $p\in W$ such that for any reduced word $p= s_1 \cdots s_{k+1}$, the word $s_1 \cdots s_k s_{k+1} s_k\cdots s_1$ is a palindromic reduced word for some  reflection $t\in T$. In this case, we refer to $p$ as a {\em $t$-prefix}. 
\end{Definition}

Thanks to Proposition~\ref{prop:Pref1}, the reflection $t$ associated with a reflection-prefix $p$ does not depends on the choice of the reduced word for $p$. Since every $t\in T$ admits at least one palindromic reduced word, the set of $t$-prefixes is not empty. For a simple reflection $s\in S$, the unique $s$-prefix is $p_s=s$.

\begin{Example}[Dihedral groups] Let $W=I_2(m)$, with $m\in \mathbb{N}_{\geq 2}\cup \{\infty\}$ be the dihedral group generated by the simple reflections $\{r,s\}$ with $m_{rs}=m$. Recall that for two letters $a,b\in S$ and $k\in \mathbb N^*$, we denote $[ab]_k=abab\cdots$ the word with $k$-letters starting by $a$ and alternating the letters $a$ and $b$. We now distinguish two cases.
\begin{itemize}
    \item[(i)] Case $m$ is even. Any reflection $t\in T$ is written either as $t=[rs]_{2k+1}$ or $t=[sr]_{2k+1}$, for $0\leq k < \frac{m}{2}$. In the first case, the unique $t$-prefix is $p_t=[rs]_{k+1}$, while in the second case we have that the unique $t$-prefix is $p_t=[sr]_{k+1}$.
    \item[(ii)] Case $m$ is odd. Any reflection $t\in T$ is written as:
    \begin{equation*}
        t=\begin{cases}
            [rs]_{2k+1},& 0\leq k < \frac{m-1}{2};\\
            [sr]_{2k+1},& 0\leq k < \frac{m-1}{2};\\
            [rs]_m=[sr]_m = w_{\circ, \{r,s\}}.
        \end{cases}
    \end{equation*}
    In the first two cases, there is a unique $t$-prefix as in (i). In the last case, there are two $t$-prefixes: $p_t=[rs]_{\frac{m-1}{2}}$ and $p_t'=[sr]_{\frac{m-1}{2}}$.
\end{itemize}
\end{Example}

\begin{Example}[Universal Coxeter system]
    Let $(U_n, S)$ be the Universal Coxeter system of rank $n$, that is $U_n=\langle S\mid s^2=e \ \mbox{for all $s \in S$} \rangle$. Every element has a unique reduced word, so the language of palindromic reduced words for $(U_n,S)$ is given by $\RPal= \{s_1\cdots s_{i-1}s_is_{i-1}\cdots s_1 \mid s_j\in S \mbox{ and } s_j\neq s_{j+1}\}$. Furthermore, the language of reduced words for reflection-prefixes coincides with the set of all reduced words in $W$, that is, $\Pref_T=\Red$.
\end{Example}

\begin{Example}\label{ex:a3}
In a Coxeter system $(W,S)$ of type $A_3$, consider the reflection\footnote{For simplicity of notations, we write $i$ in place of $s_i$.} $t=12321$. 
\begin{center} 
\begin{tikzpicture}
	[scale=2,
	 q/.style={teal,line join=round},
	 racine/.style={blue},
	 racinesimple/.style={blue},
	 racinedih/.style={blue},
	 sommet/.style={inner sep=2pt,circle,draw=black,fill=blue!40,thick,anchor=base},
	 rotate=0]
 \tikzstyle{every node}=[font=\small]
\def\grosseursimple{0.025}
\coordinate (ancre) at (0,3);

\node[sommet,label=below:$2$] (a2) at ($(ancre)+(0.4,0)$) {};
\node[sommet,label=below :$3$] (a3) at ($(ancre)+(0.8,0)$) {} edge[thick] node[auto,swap,right] {}(a2) ;
\node[sommet,label=below:$1$] (a4) at (ancre) {}  edge[thick] node[auto,swap,left] {} (a2);
\end{tikzpicture}
\end{center}

\noindent
This reflection admits other reduced expressions, namely  $t=12321=13231=31213=32123$. Correspondingly, it has has three distinct $t$-prefixes: $p_t=123$, $p_t'=132=312$, and $p_t''=321$. 
\end{Example}
 
\begin{Remark}
    If $t\in T$ and $p_t$ is a $t$-prefix, it is not true in general that there is an $r$-prefix $p_r$ with $p_r\leq_R p_t$ for all $\alpha_r\in \Phi(p_t)$. For instance, consider a Coxeter system of type $H_3$, with associated Coxeter graph:
\begin{center}
\begin{tikzpicture}
	[scale=2,
	 q/.style={teal,line join=round},
	 racine/.style={blue},
	 racinesimple/.style={blue},
	 racinedih/.style={blue},
	 sommet/.style={inner sep=2pt,circle,draw=black,fill=blue!40,thick,anchor=base},
	 rotate=0]
 \tikzstyle{every node}=[font=\small]
\def\grosseursimple{0.025}
\coordinate (ancre) at (0,3);

\node[sommet,label=below:$2$] (a2) at ($(ancre)+(0.4,0)$) {};
\node[sommet,label=below :$3$] (a3) at ($(ancre)+(0.8,0)$) {} edge[thick] node[auto,swap,right] {}(a2) ;
\node[sommet,label=below:$1$] (a4) at (ancre) {}  edge[thick] node[auto,swap,left, label=above:$5$] {} (a2);
\end{tikzpicture}
\end{center}

 Take $t=121232121 \in T$, $p_t=12123$ and $r=212$. Then $\alpha_r=2(\alpha_1)\in \Phi(p_t)$, in fact $p_t^{-1}(2(\alpha_1))=32121(2(\alpha_1))=312121(\alpha_1)\in \Phi^-$, but $212\not \leq_R 12123$.
\end{Remark}

\begin{Remark} $\ $
\begin{enumerate}
\item  Proposition~\ref{prop:Pref1} shows that any reflection-prefix is a {\em Grassmannian element}, that is, an element with at most one right descent. It would be interesting to characterize the subset of Grassmannian elements that are reflection-prefixes.  
\item Any $t$-prefix $p_t$  satisfies by definition $p_t\leq_R t$ in right weak order $(W,\leq_R)$.
\item Since $t^{-1}=t$, the notion of $t$-suffix exists and all the results valid for $t$-prefixes are valid for $t$-suffixes.
\end{enumerate}
\end{Remark}

The following statement follows from Proposition~\ref{prop:Pref1} and of the fact that $\ell(t)=2\dep(\alpha_t)+1$ for any $t\in T$.

\begin{Corollary}\label{cor:Pref1} Let $w\in W$ and  $w=s_1\cdots s_k r$ be a reduced word. Let $t\in T$, then the following statements are equivalent:
\begin{enumerate}
\item $w$ is a $t$-prefix;
\item   $t=s_1\cdots s_k r s_k\cdots s_1$ is a palindromic reduced word.
\item $\alpha_t=s_1\cdots s_k(\alpha_r)$ and $\dep(\alpha_t)=\ell(w)-1$.
\end{enumerate}
In particular, $\alpha_t\in \Phi(p_t)$ for any $t$-prefix $p_t$.
\end{Corollary}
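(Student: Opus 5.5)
The plan is to prove the cycle $(1)\Rightarrow(2)\Rightarrow(3)\Rightarrow(1)$ and then read off the final assertion from (3). Throughout, the only inputs are Proposition~\ref{prop:Pref1}, the identity $\ell(t)=2\dep(\alpha_t)+1$ for $t\in T$, and the description of inversion sets via reduced words.

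\emph{$(1)\Rightarrow(2)$.} This is essentially the definition. If $w$ is a $t$-prefix, then for every reduced word of $w$, and in particular for $w=s_1\cdots s_k r$, the word $s_1\cdots s_k r s_k\cdots s_1$ is a palindromic reduced word of some reflection. By Proposition~\ref{prop:Pref1} and the remark following the definition, this reflection does not depend on the chosen reduced word, so it is $t$.

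\emph{$(2)\Rightarrow(3)$.} Assume $t=s_1\cdots s_k r s_k\cdots s_1$ is reduced. Then $r\in D_R(w)$, because $w=s_1\cdots s_k r$ is a reduced word ending in $r$. Moreover $t=wrw^{-1}$ and $\ell(t)=2k+1=2\ell(w)-1$. Proposition~\ref{prop:Pref1}(2) then gives $\alpha_t=s_1\cdots s_k(\alpha_r)$ and $\dep(\alpha_t)=\ell(w)-1$.

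\emph{$(3)\Rightarrow(1)$.} Assume $\alpha_t=s_1\cdots s_k(\alpha_r)$ and $\dep(\alpha_t)=k$.
\begin{itemize}
\item Since $s_{w(\alpha_r)}=wrw^{-1}$, we get $t=s_{\alpha_t}=(s_1\cdots s_k)\,r\,(s_1\cdots s_k)^{-1}=wrw^{-1}$.
\item By the length identity, $\ell(t)=2\dep(\alpha_t)+1=2k+1=2\ell(w)-1$.
\item The hypotheses of Proposition~\ref{prop:Pref1} hold: $r\in D_R(w)$ because the given reduced word ends in $r$.
\end{itemize}
Hence Proposition~\ref{prop:Pref1} applies. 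Now take an arbitrary reduced word $w=s'_1\cdots s'_k s'_{k+1}$. Part (1) of Proposition~\ref{prop:Pref1} gives $s'_{k+1}=r$ and shows that $s'_1\cdots s'_k r s'_k\cdots s'_1$ is a palindromic reduced word for $t$. This holds for every reduced word of $w$, so $w$ is a $t$-prefix.

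\emph{Final claim.} Given a $t$-prefix $p_t$ with reduced word $s_1\cdots s_k r$, statement (3) gives $\alpha_t=s_1\cdots s_k(\alpha_r)$. This is the last element of the list $\Phi(p_t)=\{\alpha_{s_1},\dots,s_1\cdots s_k(\alpha_r)\}$ from \S\ref{ss:InversionSets}, so $\alpha_t\in\Phi(p_t)$.

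The only point that needs care is in $(3)\Rightarrow(1)$: the definition of a $t$-prefix quantifies over all reduced words of $w$, while (3) is stated for a single one. The uniqueness of the right descent in Proposition~\ref{prop:Pref1} is exactly what transfers the conclusion from one reduced word to all of them.
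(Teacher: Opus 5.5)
Your proof is correct and takes essentially the same route as the paper, which derives the corollary directly from Proposition~\ref{prop:Pref1} and the identity $\ell(t)=2\dep(\alpha_t)+1$. Your cycle $(1)\Rightarrow(2)\Rightarrow(3)\Rightarrow(1)$ spells out what the paper leaves implicit, including the passage from one reduced word to all of them via uniqueness of the right descent, and you rightly use only the expression $\alpha_t=s_1\cdots s_k(\alpha_r)$ from Proposition~\ref{prop:Pref1}(2), since the expression $w(\alpha_r)$ there carries a sign slip and should read $-w(\alpha_r)$.
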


The above corollary implies this useful consequence. 

\begin{Proposition}\label{prop:PrefDepth} Let $t\in T$ with associated positive root $\alpha_t\in \Phi^+$. For any $t$-prefix $p_t$ and any $\alpha\in \Phi(p_t)$, we have $\dep(\alpha)\leq \dep(\alpha_t)$; with equality if and only if $\alpha=\alpha_t$. 
\end{Proposition}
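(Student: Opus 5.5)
Write $p_t=s_1\cdots s_k r$ as a reduced word, so $\ell(p_t)=k+1$. By Corollary~\ref{cor:Pref1}, $\alpha_t=s_1\cdots s_k(\alpha_r)$ and $\dep(\alpha_t)=k$. The inversion set of $p_t$ is listed from this reduced word as
$$
\Phi(p_t)=\{\beta_1,\dots,\beta_{k+1}\},\qquad \beta_i=s_1\cdots s_{i-1}(\alpha_{s_i}),\quad s_{k+1}=r,
$$
and $\beta_{k+1}=\alpha_t$. The plan is to bound each depth directly. Since $\beta_i=w(\alpha_{s_i})$ with $w=s_1\cdots s_{i-1}$, we get $w^{-1}(\beta_i)\in\Delta$, so by definition of depth $\dep(\beta_i)\le i-1$. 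For $i\le k$ this gives $\dep(\beta_i)\le k-1<k=\dep(\alpha_t)$, while $\beta_{k+1}=\alpha_t$ has depth exactly $k$.

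This gives the inequality $\dep(\alpha)\le\dep(\alpha_t)$ for all $\alpha\in\Phi(p_t)$. It also gives the equality case: the only inversion of depth $k$ is $\beta_{k+1}=\alpha_t$. The converse direction of the equality case is trivial.

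I do not expect a real obstacle. The only points to check are two small facts already stated in the paper: the description of $\Phi(w)$ from a reduced word in \S\ref{ss:InversionSets}, and the fact that $\dep(\alpha_t)=\ell(p_t)-1$ from Corollary~\ref{cor:Pref1}. Once these are in hand, the strict inequality $\dep(\beta_i)\le i-1\le k-1$ for $i\le k$ follows immediately from the minimality in the definition of $\dep$.
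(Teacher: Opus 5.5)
Your proof is correct and is essentially the paper's argument: list $\Phi(p_t)$ from a reduced word $p_t=s_1\cdots s_k r$, take $\dep(\alpha_t)=k$ from Corollary~\ref{cor:Pref1}, and bound the depth of every other inversion by its position in the word using the definition of $\dep$. Your indexing, $\beta_i=s_1\cdots s_{i-1}(\alpha_{s_i})$ with $\dep(\beta_i)\le i-1\le k-1$ for $i\le k$, is actually cleaner than the paper's, which has a harmless off-by-one in its indices.
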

\begin{proof} Consider a reduced word $p_t=s_1\cdots s_kr$ for a $t$-prefix $p_t$. 
By Corollary~\ref{cor:Pref1}, we have $\dep(\alpha_t)=k$.
Now, let $\alpha\in \Phi(p_t)$, such that $\alpha \neq \alpha_t$.
Then there is $1\leq i\leq k-1$ such that $\alpha=s_1\cdots s_i(\alpha_{s_{i+1}})$. By definition of the depth we have
$\dep(\alpha)\leq i-1$. Therefore $\dep(\alpha)<\dep(\alpha_t)$, which proves the inequality for all $\alpha\neq \alpha_t$.
\end{proof}

\subsection{Reflection-prefixes and the root poset}

The following statement shows that there is a correspondence between reduced words of reflection-prefixes and saturated chains in the root poset. 

\begin{Proposition}\label{prop:PrefRoot} Let $t\in T$ and $\alpha_t\in \Phi^+$ be the corresponding positive root. 
\begin{enumerate}
\item Let  $\alpha_{r}\lhd s_k(\alpha_{r})\lhd\dots\lhd s_1\cdots s_{k-1} s_k(\alpha_r)=\alpha_t$ be a saturated chain in $(\Phi^+,\leq)$, where $r\in S$. Then $p_t=s_1\dots s_kr$ is a reduced word for some $t$-prefix $p_t$. 

\item Let $p_t$ be a  $t$-prefix and fix a reduced word $p_t=s_1\cdots s_k r $, with $r$ the unique right-descent. Then $\alpha_{r}\lhd s_k(\alpha_{r})\lhd\cdots\lhd s_1\cdots s_{k-1} s_k(\alpha_r)=\alpha_t$ is a saturated chain in $(\Phi^+,\leq)$.
\end{enumerate}
\end{Proposition}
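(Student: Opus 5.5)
Both parts reduce to Corollary~\ref{cor:Pref1}, which characterizes $t$-prefixes by two conditions: $\alpha_t=s_1\cdots s_k(\alpha_r)$ and $\dep(\alpha_t)=\ell(w)-1$. The other main tool is the graded structure of the root poset, Proposition~\ref{prop:BasicDepth}.

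For (1), start from the saturated chain $\alpha_r\lhd s_k(\alpha_r)\lhd\cdots\lhd s_1\cdots s_k(\alpha_r)=\alpha_t$. It begins at a simple root, so by Proposition~\ref{prop:BasicDepth}(ii) its bottom has depth $0$. Each cover raises depth by exactly one (parts (i) and (iii)), so $\dep(\alpha_t)=k$. Since $\ell(t)=2\dep(\alpha_t)+1=2k+1$ and $t=(s_1\cdots s_k)\,r\,(s_1\cdots s_k)^{-1}$ is a product of $2k+1$ letters, the palindromic word $s_1\cdots s_k r s_k\cdots s_1$ is reduced. Its prefix $s_1\cdots s_k r$ is then reduced as well, so $w=s_1\cdots s_kr$ has length $k+1$. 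By Corollary~\ref{cor:Pref1} ((2)$\Rightarrow$(1)), $w$ is a $t$-prefix. Equivalently, one can check (3) directly: $\alpha_t=s_1\cdots s_k(\alpha_r)$ and $\dep(\alpha_t)=k=\ell(w)-1$.

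For (2), take a $t$-prefix $p_t=s_1\cdots s_k r$. Corollary~\ref{cor:Pref1} gives $\alpha_t=s_1\cdots s_k(\alpha_r)$ and $\dep(\alpha_t)=k$. Set $\beta_j=s_j\cdots s_k(\alpha_r)$ for $1\le j\le k$, and $\beta_{k+1}=\alpha_r$. Each application of a simple reflection changes depth by at most one (Lemma~\ref{lem:depth}, together with $\dep(\alpha_s)=0$ for simple roots).

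The main obstacle is to make sure every $\beta_j$ is a positive root different from $\alpha_{s_{j-1}}$, so that Lemma~\ref{lem:depth} applies at each step. This follows from reducedness. Because $s_1\cdots s_k r$ is reduced, $s_{j-1}s_j\cdots s_k r$ is reduced for every $j$. Hence $s_{j-1}\cdots s_k(\alpha_r)$ lies in $\Phi(s_{j-1}\cdots s_kr)\subseteq\Phi^+$, using the explicit description of inversion sets in~\S\ref{ss:InversionSets}. In particular $\beta_{j-1}=s_{j-1}(\beta_j)$ is positive, which forces $\beta_j\neq\alpha_{s_{j-1}}$.

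Now compare depths. Going from $\alpha_r$ (depth $0$) to $\alpha_t$ (depth $k$) takes $k$ steps, each raising depth by at most one. So every step must raise depth by exactly one, that is, $\dep(\beta_{j-1})=\dep(\beta_j)+1$ for all $j$. By Proposition~\ref{prop:BasicDepth}(iii), each step $\beta_j\lhd s_{j-1}(\beta_j)$ is therefore a cover relation. The chain starts at a rank-$0$ element and ends at $\alpha_t$, whose rank is exactly $k$, so it is saturated.
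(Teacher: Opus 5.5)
Your proof is correct. Part (1) is the paper's argument in a leaner form. Once $\ell(t)=2\dep(\alpha_t)+1=2k+1$ shows that the palindromic word $s_1\cdots s_k r s_k\cdots s_1$ is reduced, its prefix $s_1\cdots s_kr$ is automatically reduced. So the paper's separate preliminary checks that $s_1\cdots s_k$ and $s_1\cdots s_kr$ are reduced are unnecessary.

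Part (2) uses a genuinely different mechanism. The paper notes that each truncated palindrome $s_i\cdots s_k r s_k\cdots s_i$ is still reduced. In effect each suffix $s_i\cdots s_kr$ is then itself a reflection-prefix, and by Corollary~\ref{cor:Pref1} $\dep(s_i\cdots s_k(\alpha_r))$ must equal $k-i+1$. The paper phrases this as a contradiction at the largest index where a cover fails. Every intermediate depth is thus pinned down individually by the reflection-prefix structure, with no need for a bound on how fast depth can change.

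You instead use only the endpoint depths $0$ and $k$ together with the unit-step bound of Lemma~\ref{lem:depth}. A counting argument then forces each of the $k$ steps to raise depth by exactly one. The cost is that you must check the hypotheses of Lemma~\ref{lem:depth} at every step. You do this correctly via the inversion sets of the suffixes $s_{j-1}\cdots s_kr$, obtaining positivity of each $\beta_j$ and $\beta_j\neq\alpha_{s_{j-1}}$. The paper leaves these positivity conditions implicit. Its stated index range $1\le i\le k-1$ also does not cover the bottom step $\alpha_r\lhd s_k(\alpha_r)$, which your indexing handles.
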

\begin{proof}  (1) Let $r\in S$ and $\alpha_{r}\lhd s_k(\alpha_{r})\lhd\dots\lhd s_1\cdots s_{k-1} s_k(\alpha_r)=\alpha_t$ be a saturated chain in $(\Phi^+,\leq)$. So $\dep(\alpha_t) = k$. Let $p_t=s_1\cdots s_kr$ and observe that $\alpha_t=s_1\cdots s_k(\alpha_r)$. If $s_1\cdots s_k$ is not reduced, then $\dep(\alpha_t)<k$, which is a contradiction. So $s_1\cdots s_k$ is reduced. If $s_1\cdots s_kr$ is not reduced, then $\ell(s_1\cdots s_k r)<\ell(s_1\cdots s_k)$. Therefore 
$
\alpha_t=s_1\cdots s_k(\alpha_r)\in \Phi^-,
$
again a contradiction. So $s_1\cdots s_kr$ is a reduced word for $p_t$. Since $\ell(t)=2\dep(\alpha_t)+1$, the palindromic word  $t= s_{s_1\cdots s_k(\alpha_r)}=s_1\cdots s_krs_k\cdots s_1$ is reduced. Hence $p_t$ is a $t$-prefix. 

\smallskip
\noindent (2)  Let $p_t$ be a  $t$-prefix and choose a reduced word $p_t=s_1\cdots s_k r $. Recall that by Proposition~\ref{prop:Pref1}, $r$ is the unique right descent of $p_t$. By Corollary~\ref{cor:Pref1}, the palindromic word
$$
s_1\cdots s_{i-1}s_i \cdots s_kr s_k \cdots s_i s_{i-1} \cdots s_1
$$
is reduced. So for any $1\leq i\leq k$, the palindromic word $s_i \cdots s_kr s_k \cdots s_i$ is also reduced.

Assume now by contradiction that there is $1\leq i\leq k-1$ such that 
$s_{i}\cdots s_k(\alpha_{r})$ is not a cover of $s_{i+1}\cdots s_k(\alpha_{r})$. In other words, by Proposition~\ref{prop:BasicDepth}, $\dep(s_{i}\cdots s_k(\alpha_{r}))\leq \dep(s_{i+1}\cdots s_k(\alpha_{r}))$. Take $i$ maximal for that property, that is,  $\alpha_{r}\lhd s_k(\alpha_{r})\lhd\dots\lhd s_{i+1}\cdots  s_k(\alpha_r)$. So $\dep(s_{i+1}\cdots s_k(\alpha_{r}))=k-i$. Therefore $\dep(s_{i}\cdots s_k(\alpha_{r}))\leq k-i<\ell(s_i\cdots s_k)=k-i+1$. By Corollary~\ref{cor:Pref1}, the element $s_1\cdots s_kr$ is not a reflection-prefix and the palindromic word $s_{i}\cdots s_k r s_k\cdots s_i$ is not a reduced word, which is a contradiction.
 \end{proof}

 \begin{Example}\label{ex:a3-chains} Consider the Coxeter system $(W,S)$ of type $A_3$ and take $t=12321$. In Example \ref{ex:a3}, we have seen that the reflection $t$ admits 3 $t$-prefixes: $p_t=123$, $p_t'=132=312$, and $p_t''=321$. By Proposition \ref{prop:PrefRoot}, one can individuate these $t$-prefixes by looking at the saturated chains in the root poset of $W$, from a simple root to $\alpha_t=\alpha_1+\alpha_2+\alpha_3$. In Figure \ref{fig:chains}, the saturated chains associated to the $t$-prefixes are highlighted. Note that $p_t'$ has two reduced expressions, hence there are two corresponding saturated chains starting from $\alpha_2$.
\begin{figure}[h]
    \centering    \includegraphics[width=1\linewidth]{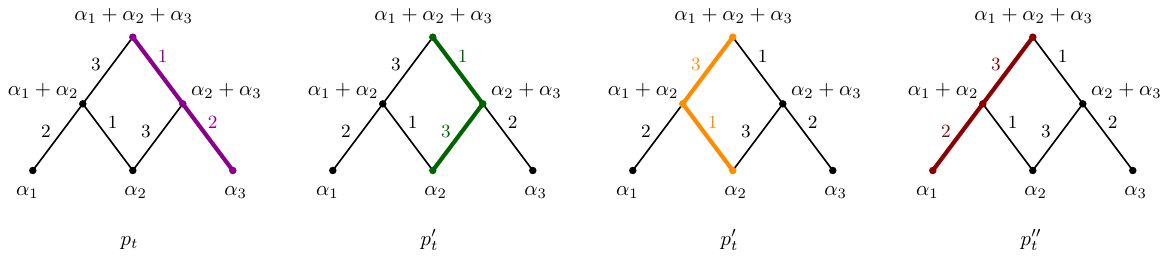}
    \caption{The saturated chains corresponding to the $t$-prefixes of Example \ref{ex:a3-chains}.}
    \label{fig:chains}
\end{figure}

\end{Example}
 
 Recall that the left weak order $(W,\leq_L)$ is defined by $u\leq_L v$ if and only if $u$ is a suffix of $v$. 

\begin{Corollary}\label{cor:Pref2} Let $t\in T$ with associated root $\alpha_t\in \Phi^+$. Consider $p_t$ be a $t$-prefix with $r$ its unique right descent. Let $u\in W$ such that  $e<_Lu\leq_L p_t$ in $(W,\leq_L)$, then $-u(\alpha_r)\leq \alpha_t$ in  $(\Phi^+,\leq)$.
\end{Corollary}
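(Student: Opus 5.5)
The plan is to reduce the statement to Proposition~\ref{prop:PrefRoot}(2). The idea is to choose a reduced word for $p_t$ that ends with a reduced word for $u$. Then $-u(\alpha_r)$ becomes one of the roots in the saturated chain attached to that reduced word.

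First, since $u\leq_L p_t$, there is $v\in W$ with $p_t=vu$ a reduced product. I will check that $D_R(u)=\{r\}$.
\begin{itemize}
\item If $s\in D_R(u)$, then $\ell(p_ts)=\ell(v(us))\leq \ell(v)+\ell(us)=\ell(v)+\ell(u)-1<\ell(p_t)$, so $s\in D_R(p_t)$.
\item Hence $D_R(u)\subseteq D_R(p_t)=\{r\}$, using Proposition~\ref{prop:Pref1}.
\item Since $u\neq e$, we have $D_R(u)\neq\emptyset$, so $D_R(u)=\{r\}$.
\end{itemize}
Consequently $u$ admits a reduced word ending in $r$, say $u=s_i\cdots s_k r$. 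Concatenating a reduced word $s_1\cdots s_{i-1}$ for $v$ gives a reduced word $p_t=s_1\cdots s_{i-1}s_i\cdots s_k r$, because the product $vu$ is reduced. (The degenerate case $u=r$ corresponds to $i=k+1$.)

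Next, I apply Proposition~\ref{prop:PrefRoot}(2) to this reduced word of $p_t$. It gives the saturated chain
$$\alpha_r\lhd s_k(\alpha_r)\lhd\cdots\lhd s_1\cdots s_k(\alpha_r)=\alpha_t$$
in $(\Phi^+,\leq)$. In particular $s_i\cdots s_k(\alpha_r)\leq\alpha_t$ for every $1\leq i\leq k+1$, where the index $i=k+1$ gives $\alpha_r$ itself.

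Finally, I compute $u(\alpha_r)=s_i\cdots s_k\, r(\alpha_r)=-s_i\cdots s_k(\alpha_r)$. Therefore $-u(\alpha_r)=s_i\cdots s_k(\alpha_r)$ is an element of the chain above, and so $-u(\alpha_r)\leq\alpha_t$.

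The only step needing real care is the first one: showing that the unique right descent $r$ of $p_t$ is also the right descent of the suffix $u$, so that the chosen reduced word of $u$ ends in $r$. This is exactly where the hypothesis $u\neq e$ is used. The rest is a direct application of Proposition~\ref{prop:PrefRoot}(2).
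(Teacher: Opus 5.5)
Your proof is correct and follows the same route as the paper: choose a reduced word $p_t=s_1\cdots s_k r$ whose suffix $s_i\cdots s_k r$ is a reduced word for $u$, then read off $-u(\alpha_r)=s_i\cdots s_k(\alpha_r)$ as a member of the saturated chain given by Proposition~\ref{prop:PrefRoot}(2). Your descent argument showing $D_R(u)=\{r\}$ justifies the existence of such a reduced word, which the paper asserts without comment, and your index range also covers the trivial case $u=r$ that the paper's range $1\leq i\leq k$ leaves out.
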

\begin{proof}  Since $e<_L u\leq_L p_t$, there is a reduced word $p_t=s_1\cdots  s_k r$ and $1\leq i\leq k$ such that $u=s_i\cdots s_k r$. So $-u(\alpha_r)=s_i\cdots s_k(\alpha_r)\leq s_1\cdots s_k(\alpha_r)=-p_t(\alpha_r)=\alpha_t$ by Proposition~\ref{prop:PrefRoot}.
\end{proof}

The following theorem provides a characterization of reflection-prefixes in term of inversion sets.

\begin{Theorem}\label{thm:Pref1}  Let $w\in W$ and $r\in D_R(w)$. Let $t\in T$ such that $\alpha_t=-w(\alpha_r)\in \Phi(w)$. Then $w$ is a $t$-prefix if and only if $B(\alpha,\alpha_t)>0$ for all $\alpha\in \Phi(w)$. 
\end{Theorem}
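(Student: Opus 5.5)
The plan is to reduce the statement to the containment $\Phi(w)\subseteq\Phi(t)$ and then test membership in $\Phi(t)$ using the formula $t(\beta)=\beta-2B(\beta,\alpha_t)\alpha_t$.

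\emph{Step 1: reformulate ``$t$-prefix''.} Write $w=s_1\cdots s_k r$ as a reduced word and set $v=wr=s_1\cdots s_k$. Since $\alpha_t=-w(\alpha_r)$, we have $t=s_{w(\alpha_r)}=wrw^{-1}=s_1\cdots s_k r s_k\cdots s_1$ and $t=wv^{-1}$. By Corollary~\ref{cor:Pref1}, $w$ is a $t$-prefix if and only if this palindromic word is reduced, i.e. if and only if $\ell(t)=2k+1$.

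I claim this is equivalent to $w\leq_R t$, i.e. to $\Phi(w)\subseteq\Phi(t)$.
\begin{itemize}
\item If the palindromic word is reduced, then $w$ is a prefix of it, so $w\leq_R t$.
\item Conversely, if $w\leq_R t$, then $\ell(t)=\ell(w)+\ell(w^{-1}t)=(k+1)+\ell(v^{-1})=2k+1$.
\end{itemize}
Since $t=t^{-1}$, we have $\Phi(t)=\{\beta\in\Phi^+\mid t(\beta)\in\Phi^-\}$. So the theorem reduces to: $t(\beta)\in\Phi^-$ for all $\beta\in\Phi(w)$ if and only if $B(\beta,\alpha_t)>0$ for all $\beta\in\Phi(w)$.

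\emph{Step 2: necessity.} Let $\beta\in\Phi(w)$ and suppose $B(\beta,\alpha_t)\leq 0$. Then $t(\beta)=\beta-2B(\beta,\alpha_t)\alpha_t$ is a nonnegative combination of two positive roots, hence lies in $\cone(\Delta)$ and is positive. Therefore $\beta\notin\Phi(t)$. Consequently, if $w$ is a $t$-prefix, Step 1 forces $B(\beta,\alpha_t)>0$ for every $\beta\in\Phi(w)$.

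\emph{Step 3: sufficiency.} Assume $B(\beta,\alpha_t)>0$ for all $\beta\in\Phi(w)$.
\begin{itemize}
\item For $\beta=\alpha_t$ we have $t(\alpha_t)=-\alpha_t\in\Phi^-$.
\item For $\beta\in\Phi(w)\setminus\{\alpha_t\}$, write $w^{-1}(\beta)=-\gamma$ with $\gamma\in\Phi^+$. We have $\gamma\neq\alpha_r$, because $w^{-1}(\alpha_t)=-\alpha_r$ and $\beta\neq\alpha_t$.
\item Since $tw=wr$, we get $w^{-1}(t(\beta))=r(w^{-1}(\beta))=-r(\gamma)$. This root is negative, because $r$ permutes $\Phi^+\setminus\{\alpha_r\}$.
\item Suppose, for contradiction, that $t(\beta)$ were positive. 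Then it would lie in $\Phi(w)$, since $w^{-1}$ sends it to a negative root. But $B(t(\beta),\alpha_t)=B(\beta,t(\alpha_t))=-B(\beta,\alpha_t)<0$, which contradicts the hypothesis applied to the element $t(\beta)$ of $\Phi(w)$.
\end{itemize}
Hence $t(\beta)\in\Phi^-$ for all $\beta\in\Phi(w)$, so $\Phi(w)\subseteq\Phi(t)$, and $w$ is a $t$-prefix by Step 1.

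The main obstacle is this sufficiency direction. Positivity of $B(\beta,\alpha_t)$ alone does not force $t(\beta)$ to be negative in infinite Coxeter groups. The trick is to use the hypothesis not only on $\beta$ but also on $t(\beta)$, after showing that $t(\beta)$ would itself belong to $\Phi(w)$ if it were positive.
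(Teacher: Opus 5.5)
Your proof is correct, and it takes a different route from the paper's.

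\textbf{The paper's route.} The paper fixes a reduced word $w=s_1\cdots s_kr$ and lists $\Phi(w)$ along that word. By $W$-invariance of $B$ it gets $B(\alpha_t,s_1\cdots s_{i-1}(\alpha_{s_i}))=-B(s_{i+1}\cdots s_k(\alpha_r),\alpha_{s_i})$. So positivity of $B(\cdot,\alpha_t)$ on the $i$-th inversion root is exactly the cover condition of Proposition~\ref{prop:BasicDepth}(iii) for the $i$-th step of the sequence $\alpha_r, s_k(\alpha_r),\dots,\alpha_t$. Proposition~\ref{prop:PrefRoot}, which matches saturated chains with $t$-prefixes, then settles both directions.

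\textbf{Your route.} You never use depth or the root poset.
\begin{enumerate}
\item You recast ``$w$ is a $t$-prefix'' as $w\le_R t$, i.e.\ $\Phi(w)\subseteq\Phi(t)$. This also reconciles the formal definition of $t$-prefix with the one given in the introduction.
\item Necessity comes from the standard half-space fact $\Phi(t)\subseteq\{\beta\in\Phi^+\mid B(\beta,\alpha_t)>0\}$.
\item Sufficiency comes from $tw=wr$. This forces $t(\beta)$, were it positive, to lie in $\Phi(w)$ with $B(t(\beta),\alpha_t)<0$, contradicting the hypothesis.
\end{enumerate}

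\textbf{What each buys.} The paper's argument gives a finer, word-by-word matching between the inversion roots of $w$ and the edges of a chain in the root poset, in line with the section's theme. Yours does not depend on any choice of reduced word. It rests only on basic facts: the inversion-set description of $\le_R$, the reflection formula, and that $r$ permutes $\Phi^+\setminus\{\alpha_r\}$. It also shows why the hypothesis must be applied to all of $\Phi(w)$ at once: $B(\beta,\alpha_t)>0$ for a single root does not force $t(\beta)\in\Phi^-$.
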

\begin{proof}  Assume first that $w=p_t$ is a $t$-prefix. By Proposition~\ref{prop:PrefRoot}, we have
$$
\alpha_{r}\lhd s_k(\alpha_{r})\lhd\dots\lhd s_1\cdots s_{k-1} s_k(\alpha_r)=\alpha_t
$$ 
 a saturated chain associated to the reduced word $p_t=s_1 \cdots s_kr$.  By the characterization of covers in the root poset, see Proposition~\ref{prop:BasicDepth}, we have: 
$$
 B(s_{i+1}\cdots s_k(\alpha_{r}),\alpha_{s_{i}})<0,\quad \textrm{for all} \ 1\leq i\leq k-1.
$$
Since $p_t=s_1\cdots s_kr$ is a reduced word, we know that the inversion set of $p_t$ is:
$$
\Phi(w)=\{\alpha_{s_1},s_1(\alpha_{s_2}),\dots,s_1\cdots s_{k-1}(\alpha_{s_k}),s_1\cdots s_k(\alpha_r)=\alpha_t\}
$$
Let $\alpha\in \Phi(p_t)$. If $\alpha=\alpha_t$ then $B(\alpha_t,\alpha_t)>0$. If $\alpha=\alpha_{s_1}$, then
$$
B(\alpha_t,\alpha_{s_1})=B(s_1\cdots s_k(\alpha_{r}),\alpha_{s_{1}})= -B(s_2\cdots s_k(\alpha_{r}),\alpha_{s_{1}})>0.
$$
If $\alpha\notin\{ \alpha_t,\alpha_{s_1}\}$, then there is $2\leq i\leq k-1$ such that $\alpha=s_1\cdots s_{i-1}(\alpha_{s_{i}})$, then 
$$
 B(\alpha_t,\alpha)=B(s_1\cdots s_k(\alpha_{r}),s_1\cdots s_{i-1}(\alpha_{s_{i}}))=- B(s_{i+1}\cdots s_k(\alpha_r),\alpha_{s_{i}})>0.
$$
Therefore   $B(\alpha,\alpha_t)>0$ for all $\alpha\in \Phi(p_t)$.

\smallskip

Conversely, assume that $B(\alpha,\alpha_t)>0$ for all $\alpha\in \Phi(w)$. Consider a reduced word $w=s_1\cdots s_k r$, so that $\alpha_t=s_1\cdots s_k(\alpha_r)$. To show that $w$ is a $t$-prefix, it suffices to prove that:
$$
 B(s_{i+1}\cdots s_k(\alpha_{r}),\alpha_{s_{i}})<0,\quad \textrm{for all} \ 1\leq i\leq k-1.
$$
This follows directly from the same computations done in the first part of the proof. 
\end{proof}

\subsection{Reflection-prefixes and the dominance order}

Reflection-prefixes allow an efficient way to compute the dominance set of  $\beta\in \Phi^+$: 
$$
\dom(\beta)=\{\alpha \in \Phi^+\mid \alpha\leq_d \beta\}
$$
and its infinite-depth $\dep_\infty(\beta)$; see \S\ref{ss:dominance}.

\begin{Proposition}\label{prop:PrefDom} Let $\beta \in \Phi^+$ and let $t=s_\beta$ be the corresponding reflection. For any $t$-prefix $p_t$ we have:  
\begin{eqnarray*}
\dom(\beta) & = & \{ \alpha\in \Phi(p_t) \ \mid\  B(\alpha,\beta)\geq 1\}\\
&=&\{\beta\}\sqcup \{ \alpha_s\in \Phi(p_t) \ \mid\  |\langle s,t\rangle| =\infty\}.
\end{eqnarray*}
In particular, $\dep_\infty (\beta)=|\{ \alpha_s\in \Phi(p_t) \ \mid\ |\langle s,t\rangle| =\infty\}|$.
\end{Proposition}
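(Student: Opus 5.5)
We prove the two equalities in turn, using the dominance characterization of Proposition~\ref{prop:dom} together with the description of $\Phi(p_t)$ for a reduced word $p_t=s_1\cdots s_kr$.

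\textbf{First equality.} For the inclusion $\supseteq$, let $\alpha\in\Phi(p_t)$ with $B(\alpha,\beta)\geq 1$. Proposition~\ref{prop:PrefDepth} gives $\dep(\alpha)\leq\dep(\beta)$. Proposition~\ref{prop:dom} then yields $\alpha\leq_d\beta$.

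For the inclusion $\subseteq$, let $\alpha\leq_d\beta$. Corollary~\ref{cor:Pref1} gives $\beta=\alpha_t\in\Phi(p_t)$. By the definition of dominance (take $w=p_t$), this forces $\alpha\in\Phi(p_t)$. Proposition~\ref{prop:dom} also gives $B(\alpha,\beta)\geq 1$.

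\textbf{Second equality.} The key step is to compute $B(\alpha,\beta)$ for each inversion $\alpha\in\Phi(p_t)$ other than $\beta$. Write $\alpha=s_1\cdots s_{i-1}(\alpha_{s_i})$ with $1\le i\le k$. As in the proof of Theorem~\ref{thm:Pref1},
$$B(\alpha,\beta)=-B(s_{i+1}\cdots s_k(\alpha_r),\alpha_{s_i}).$$
Now set $w_i=s_1\cdots s_{i-1}$, so that $\alpha=w_i(\alpha_{s_i})$. Put $\gamma=s_{i+1}\cdots s_k(\alpha_r)$; then $\beta=w_i s_i(\gamma)$.

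Since $W$ acts by isometries of $B$, the reflections satisfy $\langle s_\alpha,s_\beta\rangle=w_i\langle s_i,s_\gamma\rangle w_i^{-1}$. Also $|B(\alpha,\beta)|=|B(\alpha_{s_i},\gamma)|$. By \cite[Proposition 4.5.4]{BB}, $B(\alpha,\beta)\geq1$ (it is positive by Theorem~\ref{thm:Pref1}) if and only if the dihedral group $\langle s_\alpha,t\rangle$ is infinite. Equivalently, the covering $\gamma\lhd s_i(\gamma)$ in the saturated chain of Proposition~\ref{prop:PrefRoot} is long.

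\textbf{Matching the paper's formulation.} The statement is phrased via ``$\alpha_s\in\Phi(p_t)$ with $|\langle s,t\rangle|=\infty$''. I read $s$ here as the reflection $s_\alpha$ attached to $\alpha$, not as a simple reflection. Under that reading, the set $\{\alpha\in\Phi(p_t)\setminus\{\beta\}\mid B(\alpha,\beta)\ge1\}$ equals the second set, and the disjoint union with $\{\beta\}$ is clear.

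The formula for $\dep_\infty(\beta)=|\dom(\beta)|-1$ then follows immediately.

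\textbf{Main obstacle.} The main difficulty is justifying the sign and threshold: positivity comes from Theorem~\ref{thm:Pref1}, and the equivalence between $|B|\ge1$ and infinite order needs the dihedral criterion for arbitrary reflections, not just simple ones. That criterion is exactly \cite[Proposition 4.5.4]{BB}, so the rest is bookkeeping.
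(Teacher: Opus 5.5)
Your proof is correct and follows the paper's argument. The first equality uses Proposition~\ref{prop:PrefDepth}, Proposition~\ref{prop:dom} and the definition of dominance applied to $w=p_t$, and the second uses \cite[Proposition 4.5.4]{BB}; your appeal to Theorem~\ref{thm:Pref1} for $B(\alpha,\beta)>0$, which turns $|B(\alpha,\beta)|\ge 1$ into $B(\alpha,\beta)\ge 1$, makes explicit a step the paper leaves implicit, while the conjugation by $w_i$ and the long-cover reformulation are correct but not needed.
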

\begin{proof} The second equality follows from the well-known fact that a dihedral subgroup containing $s,t$ is infinite if and only if $|B(\alpha_s,\alpha_t)|\geq 1$ and $s\not = t$,  see for instance \cite[Proposition 4.5.4]{BB}.   

We show the first equality. Set $A=\{ \alpha\in \Phi(p_t) \ \mid\  B(\alpha,\beta)\geq 1\}$. Let $\alpha\in A$, then $\dep(\alpha)\leq \dep(\alpha_t)=\dep(\beta)$ by Proposition~\ref{prop:PrefDepth}. By Proposition~\ref{prop:dom}, we have therefore $\alpha\in \dom(\beta)$. Let $\alpha\in \dom (\beta)$, then $ B(\alpha,\beta)\geq 1$ by Proposition~\ref{prop:dom} again. Now,  $\beta\in \Phi(p_t)$ by Corollary~\ref{cor:Pref1}. So $p_t^{-1}(\beta)\in \Phi^-$. Since $\alpha\leq_d\beta$, we have by definition of the dominance order that $p_t^{-1}(\alpha)\in \Phi^-$. In other words $\alpha\in \Phi(p_t)$, which completes the proof.
\end{proof}

\begin{Example} Consider the Coxeter system of Example \ref{ex:aut1}. Let $t=2321232\in~T$ and $p_t=2321$, and call $\beta=\alpha_t$. 
We have $B(\alpha_1,\alpha_2)=B(\alpha_1,\alpha_3)=-1/2$ and $B(\alpha_2,\alpha_3)=-1$.  Hence, $\Phi(p_t)=\{\alpha_2, 2\alpha_2+\alpha_3, 3\alpha_2+2\alpha_3, \beta=\alpha_1+6\alpha_2+3\alpha_3\}$. Finally, to obtain the set of dominance of $\beta$, we have to compute the bilinear form $B$ between $\beta$ and the roots of $\Phi(p_t)\setminus\{\beta\}$ and see when the value is equal or grater than 1. Therefore it is not difficult to check that $\dom(\beta)=\{\alpha_2, 2\alpha_2+\alpha_3\}$, and $\dep_\infty(\beta)=2$.  
\end{Example}

\subsection{Reflection-prefixes and dihedral reflection subgroups} In this section, we show how to use reflection-prefixes to compute the canonical generators of any dihedral reflection subgroup $W'$ of $W$. This approach leads to an algorithm in \S\ref{ss:Algo} that does not need a geometric representation to operate. 
\smallskip

Let $W'$ be a reflection subgroup of $W$, i.e., $W'=\langle A\rangle$ for some $A\subseteq T$. The set of reflections in $W'$ is $T_{W'}=T\cap W'$. Write:
$$
\Phi_{W'}=\{\alpha \in \Phi \mid s_\alpha \in W'\}\quad \textrm{and}\quad \Delta_{W'}=\{\alpha\in \Phi^+ \mid \Phi(s_\alpha)\cap \Phi_{W'}=\{\alpha\}\}.
$$
Dyer~\cite{Dy90} showed that $(W',\chi(W'))$ is a Coxeter system with set of {\em canonical generators}:
$$
\chi(W')=\{r\in T_{W'}\mid \ell(tr)<\ell(r),\ \forall t\in T_{W'}\}=\{s_\alpha \in T\mid \alpha\in \Delta_{W'}\}.
$$
Moreover, he showed that $\Phi_{W'}$ is a root system for $(W',\chi(W'))$, with simple system  $\Delta_{W'}$ and  set of positive roots is $\Phi_{W'}^+=\Phi^+\cap \Phi_{W'}$. In particular $T_{W'}=\{s_\alpha\mid \alpha\in\Phi_{W'}^+\}$. We denote by $\ell_{W'}:W'\to \mathbb N$ the  length function relative to $(W',\chi(W'))$. We refer the reader to \cite[\S2.8~\&~\S2.9]{DFHM24} for a survey of the properties used in this section.

Any element $w\in W$ has a unique decomposition $w=uv$ with $u\in W'$, $v\in X_{W'}$, where $X_{W'}=\{x\in W\mid \ell(s_\alpha x) >\ell(x),\ \forall \alpha\in\Delta_{W'}\}$ is the set of {\em minimal coset representatives of $W'\backslash W$}. Moreover, $\Phi_{W'}(u)$, the inversion set of $u$ relative to the Coxeter system $(W',\chi(W'))$, satisfies the following equation: 
$$
\Phi_{W'}(u)=\Phi(w)\cap \Phi_{W'}=\Phi(u)\cap\Phi_{W'}\ \text{and}\ |\Phi_{W'}(u)|=\ell_{W'}(u).
$$

 \smallskip
 
 From now on, fix  $r,t\in T$ such that $r\not =t$ and let  $W'=\langle r,t\rangle$ be the {\em dihedral reflection subgroup} generated by $r$ and $t$, with set of canonical generators $\chi(W')=\{s_1,s_2\}$ and simple system $\Delta_{W'}=\{\alpha_1,\alpha_2\}$. Let $m=\ord(s_1 s_2)\in \mathbb N_{\geq 2}\sqcup\{\infty\}$. Recall that for two letters $s,t\in T$ and $k\in \mathbb N^*$, we denote $[st]_k=stst\cdots$ the word with $k$-letters starting by $s$ and alternating the letters $s$ and $t$. For $0\leq k< m/2$, write $t_{1,k}=[s_1s_2]_{2k+1}$ and $t_{2,k}=[s_2s_1]_{2k+1}$. The set of reflections $T_{W'}$ admits a partition $T_{W'}=T_{W'}^1 \sqcup T_{W'}^2$, where:
$$
T_{W'}^1=\{t_{1,k}\mid 0\leq k < m/2\} \quad\textrm{and}\quad T_{W'}^2=\{t_{2,k}\mid 0\leq k < m/2\}.
$$
For  all $0\leq k < m/2$, observe that the words $t_{1,k}=[s_1s_2]_{2k+1}$ and $t_{2,k}=[s_2s_1]_{2k+1}$ are reduced and that $\ell_{W'}(t_{i,k})=2k+1$. We consider the following total order on $T_{W'}$:
$$
s_1=t_{1,0}\prec \dots \prec t_{1,k}\prec t_{1,k+1} \prec\dots \prec  t_{2,l+1} \prec t_{2,l}\prec \dots t_{2,1}\prec t_{2,0}=s_2,
$$
where $0\leq k,l < m/2-1$. This is a {\em reflection order} on $T_{W'}$ in the sense of~\cite[Definition~2.1]{Dy92}; see also \cite[\S5.2]{BB}.

Write $\alpha_{i,k}\in \Phi^+_{W'}$ for the root corresponding to the reflection $t_{i,k}$. We obtain therefore a partition $\Phi^+_{W'}=\Phi_{W'}^1 \cup \Phi_{W'}^2$, where $\Phi^i_{W'}=\{\alpha_{i,k}\mid 0\leq k<m/2\}$; moreover $\Phi_{W'}^1\cap\Phi_{W'}^2$ is $\{ \alpha_{1,(m-1)/2}=\alpha_{2,(m-1)/2}\}$ if $m$ is odd or empty  if $m$ is infinite or even. This decomposition yields a total order on $\Phi^+_{W'}$:
\begin{equation}
\label{eq:InitialSection}
\alpha_1=\alpha_{1,0}\prec \dots \prec \alpha_{1,k}\prec \alpha_{1,k+1} \prec\dots \prec  \alpha_{2,l+1} \prec \alpha_{2,l}\prec \dots \alpha_{2,1}\prec \alpha_{2,0}=\alpha_2,
\end{equation}
where $0\leq k,l < m/2-1$. The following proposition shows that restriction of reflection-prefixes to dihedral subgroups are reflection-prefixes.

\begin{Proposition}\label{prop:PrefDihed} Let $t\in T$ and $W'$ be a dihedral reflection subgroup containing $t$, with set of canonical generators $\chi(W')=\{s_1,s_2\}$ and $m=\ord(s_1 s_2)$. Let $p_t$ be a $t$-prefix. Write $p_t=u_t v$ with $u_t\in W'$ and $v\in X_{W'}$. Then $u_t$ is a $t$-prefix for the Coxeter system $(W',\chi(W'))$ and either $\Phi_{W'}(u_t)\subseteq \Phi_{W'}^1$ or $\Phi_{W'}(u_t)\subseteq \Phi_{W'}^2$.
 \end{Proposition}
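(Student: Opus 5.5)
The plan is to work with the decomposition $p_t=u_tv$, $u_t\in W'$, $v\in X_{W'}$, and use the facts recalled above: $\Phi_{W'}(u_t)=\Phi(p_t)\cap\Phi_{W'}$ and $\ell_{W'}(u_t)=|\Phi_{W'}(u_t)|$. First I would check that $\alpha_t\in\Phi_{W'}(u_t)$. This holds because $\alpha_t\in\Phi(p_t)$ by Corollary~\ref{cor:Pref1}, and $\alpha_t\in\Phi_{W'}$ since $t\in W'$. Next I would apply Theorem~\ref{thm:Pref1} to get $B(\alpha,\alpha_t)>0$ for every $\alpha\in\Phi(p_t)$, and in particular for every $\alpha\in\Phi_{W'}(u_t)$. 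The whole argument then takes place in the dihedral root system $\Phi_{W'}$ with simple system $\{\alpha_1,\alpha_2\}$.

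The second step is to determine which inversion sets occur in the dihedral system. In $(W',\chi(W'))$, the inversion set of an element $u$ with reduced word $[s_1s_2]_j$ is the initial segment $\{\alpha_{1,0},\dots\}$ of length $j$ in the order \eqref{eq:InitialSection}. If the word is $[s_2s_1]_j$, it is the corresponding final segment. For $j<m$ (or any $j$ if $m=\infty$), such a segment lies entirely in $\Phi^1_{W'}$ or in $\Phi^2_{W'}$ exactly when $j\le \lceil m/2\rceil$; for odd $m$ the middle root lies in both. So $\Phi_{W'}(u_t)$ is an initial or final segment containing $\alpha_t$, and I only need to show that it cannot extend too far.

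The key step, which I expect to be the main obstacle, is the positivity argument. Say $\alpha_t=\alpha_{1,k}$ and $\Phi_{W'}(u_t)$ is an initial segment; the final-segment case is symmetric. I want to show the segment stops exactly at $\alpha_{1,k}$, so that $u_t=[s_1s_2]_{k+1}$. To do this I would express $B(\alpha_{i,j},\alpha_{1,k})$ explicitly using the standard dihedral computation. Roots $\alpha_{1,j}$ and $\alpha_{1,k}$ correspond to lines at angle $(k-j)\pi/m$, so the pairing is $\pm\cos$ of that angle (or a $\cosh$-type expression when $m=\infty$), and the sign is positive as long as they are on the same side.

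Using this, I would show the following. If the segment contained a root strictly beyond $\alpha_{1,k}$, the last such root would either pair non-positively with $\alpha_t$, or the segment would have length larger than $k+1$. A cleaner route to the second possibility is through depth. By Proposition~\ref{prop:PrefDepth}, $\alpha_t$ has strictly maximal depth in $\Phi(p_t)$. In the dihedral chain, the reflection $t_{1,k}$ conjugated by $s_1\cdots$ shows that any root past $\alpha_t$ in the segment has larger $W'$-depth. The difficulty is that $W$-depth and $W'$-depth are not directly comparable. For this reason I would mainly rely on the sign of $B$: in the reflection order, the roots $\alpha_{1,j}$ with $j>k$ satisfy $B(\alpha_{1,j},\alpha_{1,k})$ either with reflected sign ($t(\alpha_{1,j})$ negative side) or via $s_{\alpha_t}$ mapping them back before $\alpha_t$. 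I would check this case by case on the finite-versus-infinite nature of $m$.

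Granting that step, $\Phi_{W'}(u_t)=\{\alpha_{1,0},\dots,\alpha_{1,k}\}\subseteq\Phi^1_{W'}$. Hence $u_t=[s_1s_2]_{k+1}$, which is the $t$-prefix of $t=[s_1s_2]_{2k+1}$ in $W'$ by the dihedral example.
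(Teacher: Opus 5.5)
\textbf{Gap: the key step is not established, and the tool you propose cannot establish it.} Your first two steps are fine. $\Phi_{W'}(u_t)=\Phi(p_t)\cap\Phi_{W'}$ is an initial section of the order (\ref{eq:InitialSection}) or of its reverse, it contains $\alpha_t$, and Theorem~\ref{thm:Pref1} gives $B(\alpha,\alpha_t)>0$ on it. The missing step is showing that the segment ends exactly at $\alpha_t$. The sign of $B(\cdot,\alpha_t)$ cannot detect this, because it does not distinguish roots lying before $\alpha_t$ from roots lying after it:
\begin{itemize}
\item For $m=\infty$, with $B(\alpha_1,\alpha_2)=-\cosh\theta$, one has $B(\alpha_{1,j},\alpha_{1,k})=\cosh((k-j)\theta)>0$ for all $j,k$.
\item For finite $m$, every root at angle $<\pi/2$ from $\alpha_t$ pairs positively with $\alpha_t$.
\end{itemize}
So the configuration $t=s_1$ with $\Phi_{W'}(u_t)=\{\alpha_1,\alpha_{1,1}\}$ (any $m\geq 3$) satisfies everything your argument uses, yet contradicts the conclusion. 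The same holds for $\alpha_t=\alpha_{1,1}$ with $\Phi_{W'}(u_t)=\{\alpha_{1,0},\alpha_{1,1},\alpha_{1,2}\}$ when $m=\infty$. Your fallback phrases do not help: ``the segment would have length larger than $k+1$'', ``$t(\alpha_{1,j})$ on the negative side'', and ``$s_{\alpha_t}$ mapping them back'' contradict nothing you have established. Theorem~\ref{thm:Pref1} only constrains the signs of $B(\alpha,\alpha_t)$.

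\textbf{What closes it.} The missing ingredient is exactly the depth comparison you set aside as unavailable, and it is available. By Dyer's functoriality of the Bruhat graph, $\ell_{W'}(x)<\ell_{W'}(y)$ implies $\ell(x)<\ell(y)$ for $x,y\in T_{W'}$. Hence $\dep$ is strictly increasing along $\Phi^1_{W'}$ and strictly decreasing along $\Phi^2_{W'}$ in (\ref{eq:InitialSection}) (Lemma~\ref{lem:DepthDi}). Combined with Proposition~\ref{prop:PrefDepth} ($\alpha_t$ is the unique root of maximal depth in $\Phi(p_t)$), this rules out any root of $\Phi^1_{W'}$ beyond $\alpha_t$ in the segment. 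This is the paper's route.

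There is also a more elementary way to get the endpoint property. Write $p_t=wr$ reduced. Then $\Phi(w)\cap\Phi_{W'}=\Phi_{W'}(u_t)\setminus\{\alpha_t\}$ is again a $W'$-inversion set, hence again an initial section, so $\alpha_t$ is an extremity of the segment.

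Once $\alpha_t$ is known to be the last root, your positivity idea does real work. $B(\alpha_1,\alpha_t)>0$ forces $\alpha_t$ to lie at angle $<\pi/2$ from the starting simple root, so the segment stays inside $\Phi^1_{W'}$. This also covers a case your split ``$\alpha_t=\alpha_{1,k}$, initial segment'' silently omits: a segment starting at $\alpha_1$ and ending in $\Phi^2_{W'}\setminus\Phi^1_{W'}$. For instance, for even $m$ the configuration $\{\alpha_{1,0},\dots,\alpha_{1,m/2-1},\alpha_{2,m/2-1}\}$ is excluded because $B(\alpha_1,\alpha_{2,m/2-1})=0$. Depth monotonicity along $\Phi^1_{W'}$ and $\Phi^2_{W'}$ separately gives no comparison between $\alpha_{1,m/2-1}$ and $\alpha_{2,m/2-1}$, so positivity is the clean way to handle this case.
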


We need two lemmas before proving the above proposition. First, we need the following useful property that is a consequence of Dyer's ``functoriality of the Bruhat graph''~\cite{Dy91}: for any $s,t\in T_{W'}$ such that $\ell_{W'}(s)< \ell_{W'}(t)$ we have $\ell(s)< \ell(t)$, or in other words, $\dep(\alpha_s)<\dep(\alpha_t)$; see for instance~\cite[\S2.8-Eq.(4)]{DFHM24}. From this discussion, we deduce the following lemma. 

\begin{Lemma}\label{lem:DepthDi} With $\Phi_{W'}^+$ totally ordered as above, the function $\dep:\Phi_{W'}\to \mathbb N$ is strictly increasing on $\Phi_{W'}^1$ and strictly decreasing on $\Phi_{W'}^2$. \qed
\end{Lemma}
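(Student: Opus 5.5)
The lemma is the restriction of the depth function to the two ``halves'' of $\Phi^+_{W'}$ in the ordering \eqref{eq:InitialSection}. I will derive it directly from the consequence of Dyer's functoriality recalled just before the statement: for $s,t\in T_{W'}$ with $\ell_{W'}(s)<\ell_{W'}(t)$ we have $\ell(s)<\ell(t)$, equivalently $\dep(\alpha_s)<\dep(\alpha_t)$. The last equivalence holds because $\ell(t)=2\dep(\alpha_t)+1$ for every $t\in T$, so comparing lengths of reflections is the same as comparing depths of their roots.

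The plan is to compute the $W'$-lengths along each half of the order. By construction $t_{1,k}=[s_1s_2]_{2k+1}$ and $t_{2,k}=[s_2s_1]_{2k+1}$ are reduced words in $(W',\chi(W'))$, so $\ell_{W'}(t_{i,k})=2k+1$. This uses the standard description of a dihedral Coxeter system, in which every alternating word of length $<m$ (or any length if $m=\infty$) is reduced.

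\emph{Increasing on $\Phi^1_{W'}$.} Take $0\le k<k'<m/2$. Then $\alpha_{1,k}\prec\alpha_{1,k'}$ and $\ell_{W'}(t_{1,k})=2k+1<2k'+1=\ell_{W'}(t_{1,k'})$. Hence $\dep(\alpha_{1,k})<\dep(\alpha_{1,k'})$, so $\dep$ is strictly increasing along $\prec$ on $\Phi^1_{W'}$.

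\emph{Decreasing on $\Phi^2_{W'}$.} In the order \eqref{eq:InitialSection}, $\alpha_{2,l+1}\prec\alpha_{2,l}$, so moving forward along $\prec$ decreases $l$. The same computation gives $\dep(\alpha_{2,l'})>\dep(\alpha_{2,l})$ whenever $l'>l$. So $\dep$ is strictly decreasing along $\prec$ on $\Phi^2_{W'}$.

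The only point needing care is the case $m$ odd, where $\Phi^1_{W'}$ and $\Phi^2_{W'}$ share the root $\alpha_{1,(m-1)/2}=\alpha_{2,(m-1)/2}$. This root is the maximum of $\prec$ on $\Phi^1_{W'}$ and the minimum on $\Phi^2_{W'}$, and it has the largest $W'$-length, namely $m$. The two monotonicity statements therefore hold on each half separately, and that shared root is where $\dep$ peaks. I expect no genuine obstacle here: the whole content is the functoriality inequality, which the paper takes as given, so the proof reduces to this bookkeeping.
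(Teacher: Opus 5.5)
Your proposal is correct and is exactly the paper's argument. The paper deduces the lemma directly from the functoriality consequence $\ell_{W'}(s)<\ell_{W'}(t)\Rightarrow \dep(\alpha_s)<\dep(\alpha_t)$, combined with $\ell_{W'}(t_{i,k})=2k+1$ and the fact that $\prec$ runs through increasing $k$ on $\Phi^1_{W'}$ and decreasing $k$ on $\Phi^2_{W'}$. One small wording fix: in a dihedral system, alternating words of length $\le m$ (not only $<m$) are reduced, and you need the case $2k+1=m$ for the shared root when $m$ is odd, as your last paragraph implicitly uses.
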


Second, Dyer showed in \cite{Dy92} that inversion sets of elements in $W$ are {\em initial sections} of reflection orders. In our case,  it implies that any inversion set $\Phi_{W'}(u)$ is either a finite segment starting at $\alpha_1$ or starting at $\alpha_2$, that is in Eq~(\ref{eq:InitialSection}), a saturated chain starting at $\alpha_1$ or ending at $\alpha_2$. Moreover $\Phi_{W'}(u)$ contains  both $\alpha_1$ and $\alpha_2$  if and only if $u$ is the longest element $w_{\circ,W'}$ of $W'$ and $W'$ is therefore finite. We give this statement in the following proposition, which follows directly from~\cite[Definition~2.1~\&~Lemma~2.11]{Dy92} by considering the above reflection order on $T_{W'}$ or its dual.

\begin{Lemma}\label{lem:InvDi} Let $u\in W'$, then there is $0\leq k< m/2$ such that one of the following equality is satisfied:  
\begin{enumerate} 
\item $\Phi_{W'}(u)=\{\alpha_1=\alpha_{1,0},\alpha_{1,1}, \dots , \alpha_{1,k} \}$;
\item $\Phi_{W'}(u)=\{\alpha_2=\alpha_{2,0},\alpha_{2,1}, \dots , \alpha_{2,k} \}$;
\item $m<\infty$ is odd, $k<m/2-1$ and 
$$
\Phi_{W'}(u)=\{\alpha_1=\alpha_{1,0}, \dots , \alpha_{1,(m-1)/2}=\alpha_{2,(m-1)/2},\alpha_{2,(m-3)/2},\dots ,\alpha_{2,k} \} 
$$
$$
\textrm{or}\quad \Phi_{W'}(u)=\{\alpha_2=\alpha_{2,0}, \dots , \alpha_{2,(m-1)/2}=\alpha_{1,(m-1)/2},\alpha_{1,(m-3)/2},\dots ,\alpha_{1,k} \} 
$$

\item $m<\infty$ is even and 
$$
\Phi_{W'}(u)=\{\alpha_1=\alpha_{1,0}, \dots , \alpha_{1,m/2-1},\alpha_{2,m/2-1},\alpha_{2,(m-2)/2},\dots ,\alpha_{2,k} \} 
$$
$$
\textrm{or}\quad \Phi_{W'}(u)=\{\alpha_2=\alpha_{2,0}, \dots , \alpha_{2,m/2-1},\alpha_{1,m/2-1},\alpha_{1,(m-2)/2},\dots ,\alpha_{1,k} \} 
$$
\end{enumerate}
Moreover, $\alpha_1,\alpha_2\in \Phi_{w'}(u)$ if and only if $u=w_{\circ,W'}$. \qed
\end{Lemma}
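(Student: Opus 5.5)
The plan is to work entirely inside the Coxeter system $(W',\chi(W'))$. By Dyer's results recalled above, $\Phi_{W'}$ is a root system for it with simple system $\Delta_{W'}=\{\alpha_1,\alpha_2\}$, and $\Phi_{W'}(u)$ is exactly the inversion set of $u$ relative to this system. So the statement reduces to describing inversion sets in a dihedral Coxeter group $I_2(m)$ with generators $s_1,s_2$. Rather than invoking the initial-section machinery of~\cite{Dy92}, I would verify the list directly from reduced words.

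\textbf{Step 1: compute the inversions along a reduced word.} Every $u\in W'$ has a reduced word of the form $[s_1s_2]_j$ or $[s_2s_1]_j$ with $0\leq j\leq m$ (and $j<\infty$). By the symmetry $s_1\leftrightarrow s_2$ it suffices to treat $u=[s_1s_2]_j$. The inversion-set formula of \S\ref{ss:InversionSets}, applied in $W'$, gives
$$
\Phi_{W'}(u)=\{\beta_1,\dots,\beta_j\},\qquad \beta_i=[s_1s_2]_{i-1}(\alpha_{x_i}),
$$
where $x_i$ is the $i$-th letter of the word. The reflection $s_{\beta_i}$ equals $[s_1s_2]_{i-1}\,x_i\,[s_1s_2]_{i-1}^{-1}$, and this word collapses to the alternating word $[s_1s_2]_{2i-1}$.

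\textbf{Step 2: identify these reflections with the $t_{a,k}$.} If $2i-1<m$, then $s_{\beta_i}=t_{1,i-1}$, so $\beta_i=\alpha_{1,i-1}$. If $2i-1\geq m$ (possible only for $m<\infty$), the braid relation $[s_1s_2]_m=[s_2s_1]_m$ rewrites $[s_1s_2]_{2i-1}$ as $[s_2s_1]_{2(m-i)+1}$. Hence $s_{\beta_i}=t_{2,m-i}$ and $\beta_i=\alpha_{2,m-i}$.

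\textbf{Step 3: read off the four cases.} Take $j$ with $2j-1<m$, i.e.\ $j\leq \lceil m/2\rceil$ (all $j$ when $m=\infty$). Then $\Phi_{W'}(u)=\{\alpha_{1,0},\dots,\alpha_{1,j-1}\}$, which is case~(1) with $k=j-1$. For larger $j$, the list runs through all of $\Phi^1_{W'}$ and then descends in $\Phi^2_{W'}$ down to $\alpha_{2,m-j}$. When $m$ is odd, the middle root $\alpha_{1,(m-1)/2}=\alpha_{2,(m-1)/2}$ appears once, giving case~(3); when $m$ is even, there is no shared root, giving case~(4). Exchanging $s_1$ and $s_2$ produces case~(2) and the second alternatives in~(3) and~(4).

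\textbf{Step 4: the final assertion.} Having $\alpha_a\in\Phi_{W'}(u)$ means that $s_a$ is a left descent of $u$ in $W'$. So $\alpha_1,\alpha_2\in\Phi_{W'}(u)$ means that $u$ has both generators as left descents. Applying Lemma~\ref{lem:Descents} (to left descents, via $u^{-1}$) in $(W',\chi(W'))$ then forces $W'$ to be finite and $u=w_{\circ,W'}$. The converse holds because $w_{\circ,W'}$ inverts all positive roots.

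\textbf{Main obstacle.} The only delicate point is the index bookkeeping in Step~2: correctly rewriting $[s_1s_2]_{2i-1}$ via the braid relation once $2i-1\geq m$, and checking the odd and even boundaries (the shared middle root, and the range of $k$). Everything else is routine.
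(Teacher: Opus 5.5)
Your argument is correct, but it takes a different route from the paper. The paper gives no computation. It says the lemma follows directly from Dyer's theory of reflection orders \cite{Dy92} (Definition~2.1 and Lemma~2.11): inversion sets are initial sections of reflection orders, applied to the displayed order on $T_{W'}$ or its dual. The four cases are then just the initial sections of these two total orders. The ``moreover'' part is the remark that an inversion set containing both $\alpha_1$ and $\alpha_2$ forces $W'$ finite and $u=w_{\circ,W'}$.

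You instead compute inside the dihedral system $(W',\chi(W'))$. You use alternating reduced words, compute the roots $\beta_i$, identify $s_{\beta_i}$ with the alternating word $[s_1s_2]_{2i-1}$, and use the braid relation to turn this into $t_{2,m-i}$ past the middle. For the last assertion you apply Lemma~\ref{lem:Descents} to $u^{-1}$; you could also read it off from Step~3, since both simple roots occur only when $j=m$.

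The citation is shorter but relies on external machinery. Your version is elementary and self-contained, and it also tells you which element realizes each case: for example, case~(1) with index $k$ is exactly $u=[s_1s_2]_{k+1}$. That is the information the proof of Proposition~\ref{prop:PrefDihed} actually needs. There the paper writes $u_t=[s_1s_2]_k$, an off-by-one error that your bookkeeping corrects.

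Two small points to tidy up.
\begin{enumerate}
\item For $m$ odd, $2j-1<m$ means $j\le (m-1)/2$, not $j\le\lceil m/2\rceil$. The value $j=(m+1)/2$ still gives case~(1) with $k=(m-1)/2$, but only because $\beta_j=\alpha_{2,(m-1)/2}=\alpha_{1,(m-1)/2}$ is the shared middle root. You should say so explicitly.
\item The value $j=0$, i.e.\ $u=e$, gives $\Phi_{W'}(u)=\emptyset$. This is none of the listed sets, since $k\geq 0$ is required. This is a defect of the statement itself, so either exclude $u=e$ or allow the empty set explicitly.
\end{enumerate}
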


\begin{proof}[Proof of Proposition~\ref{prop:PrefDihed}] Let $r\in S$ such that $D_R(p_t)=\{r\}$, by Proposition~\ref{prop:Pref1}. Write $p_t=wr$ as a reduced product. If $w=e$, then $p_t=r$ and $t=r\in\chi(W')$. So $t$ is the $t$-prefix in $(W',\chi(W'))$. Assume  $w\not = e$. Then $\alpha_t=-p_t(\alpha_r)=w(\alpha_r)$ and $\Phi(w)=\Phi(p_t)\setminus\{\alpha_t\}$. So $\alpha_t$ is the leftmost root in each expression for $\Phi_{W'}(u_t$ in Lemma~\ref{lem:InvDi}). But $\dep(\alpha_t)$ is the unique maximum of the depth of roots in $\Phi(p_t)$, by Proposition~\ref{prop:PrefDepth}. Hence  $\dep(\alpha_t)$ is the unique maximum of the depth on $\Phi_{W'}(u_t)$. So by Lemma~\ref{lem:DepthDi} the only possibilities for $\Phi_{W'}(u_t)$ are:

\noindent (a) $\Phi_{W'}(u_t)=\{\alpha_1=\alpha_{1,0},\alpha_{1,1}, \dots , \alpha_{1,k} \}$, for some $1\leq k< m/2$, and $\alpha_t=\alpha_{1,k}$. Then $\Phi_{W'}(u_t)\subseteq \Phi_{W'}^1$. So  $u_{t}=[s_1 s_2]_{k}$ is a $t$-prefix in $(W',\chi(W'))$ by definiton.

\noindent (b) $\Phi_{W'}(u)=\{\alpha_2=\alpha_{2,0},\alpha_{2,1}, \dots , \alpha_{2,k} \}$, for some $1\leq k< m/2$,  and $\alpha_t=\alpha_{2,k}$. Then  $\Phi_{W'}(u_t)\subseteq \Phi_{W'}^2$ and $u_{t}=[s_2 s_1]_{k}$ is a $t$-prefix in $(W',\chi(W'))$. 
\end{proof}

\subsection{Reflection-prefixes and canonical generators of dihedral reflection subgroups}\label{ss:Algo}

As a consequence of Proposition~\ref{prop:PrefDihed} and the description of inversion sets in Lemma~\ref{lem:InvDi} we obtain the following   algorithm to compute the canonical generators of a dihedral reflection subgroup and its simple system. This algorithm is for instance useful when one wants to find the canonical set of generators of a reflection subgroups as explained in~\cite[Proposition~3.7]{Dy90}.

\begin{Theorem}\label{prop:algo} Let $r,t\in T$ with $r\not = t$ and let $W'=\langle r,t\rangle$ be the dihedral reflection subgroup generated by $r$ and $t$. Let $p_r$ be a $r$-prefix and $p_t$ be a $t$-prefix.
\begin{enumerate}
\item If $|\Phi_{W'}(p_r)|=|\Phi_{W'}(p_t)|=1$, then:
$$
\Delta_{W'}=\{\alpha_r,\alpha_t\}\quad\textrm{and}\quad\chi(W')=\{r,t\}.
$$
\item If   $|\Phi_{W'}(p_r)|>1$ (resp. $|\Phi_{W'}(p_t)|>1$), then:
 $$
 \Delta_{W'}=\{\alpha_1,\alpha_2\}\quad\textrm{and}\quad\chi(W')=\{s_1,s_2\},
 $$
 where $\alpha_1$ is the unique root with the smallest depth in $\Phi(p_r)\cap \Phi_{W'}$ (resp. $\Phi(p_t)\cap \Phi_{W'}$) and $s_1(\alpha_2)$ is the unique root with smallest depth in $\Phi(p_r)\cap \Phi_{W'}\setminus\{\alpha_1\}$ (resp. $\Phi(p_t)\cap \Phi_{W'}\setminus\{\alpha_1\}$). \qed
\end{enumerate}
 \end{Theorem}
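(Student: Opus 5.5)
The plan is to reduce everything to the dihedral reflection subgroup, using Proposition~\ref{prop:PrefDihed} together with the description of inversion sets in Lemma~\ref{lem:InvDi}. Write $p_t=u_tv$ with $u_t\in W'$ and $v\in X_{W'}$. Then $\Phi_{W'}(u_t)=\Phi(p_t)\cap\Phi_{W'}$, and this set always contains $\alpha_t$ by Corollary~\ref{cor:Pref1}. By Proposition~\ref{prop:PrefDihed}, $u_t$ is a $t$-prefix in $(W',\chi(W'))$ and $\Phi_{W'}(u_t)$ lies in $\Phi^1_{W'}$ or in $\Phi^2_{W'}$. Without loss of generality take $\Phi^1_{W'}$. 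The proof of Proposition~\ref{prop:PrefDihed} then gives $\Phi_{W'}(u_t)=\{\alpha_{1,0},\dots,\alpha_{1,k}\}$ with $\alpha_t=\alpha_{1,k}$ and $\ell_{W'}(t)=2k+1$. The same reasoning applies verbatim to $r$ and $p_r$.

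\textbf{Case (1).} Assume $|\Phi_{W'}(p_r)|=|\Phi_{W'}(p_t)|=1$. Then $k=0$ for both reflections, so $\alpha_t=\alpha_{i,0}$ and $\alpha_r=\alpha_{j,0}$ are simple roots of $\Delta_{W'}$. Since $r\neq t$, the two roots are distinct. As $\Delta_{W'}$ has exactly two elements, $\Delta_{W'}=\{\alpha_r,\alpha_t\}$ and $\chi(W')=\{r,t\}$.

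\textbf{Case (2).} Assume $|\Phi_{W'}(p_t)|>1$, so $k\geq1$ and the set $\{\alpha_{1,0},\dots,\alpha_{1,k}\}$ contains both $\alpha_{1,0}=\alpha_1$ and $\alpha_{1,1}$. The ordering comes from Lemma~\ref{lem:DepthDi}: depth is strictly increasing along $\Phi^1_{W'}$. Hence $\alpha_1$ is the unique root of smallest depth in $\Phi(p_t)\cap\Phi_{W'}$, and $\alpha_{1,1}$ is the unique root of smallest depth once $\alpha_1$ is removed. It remains to identify $\alpha_{1,1}$. It is the root of $t_{1,1}=s_1s_2s_1$, namely $s_1(\alpha_2)$, since $s_1s_2s_1=s_{s_1(\alpha_2)}$. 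So $\alpha_2=s_1(\alpha_{1,1})$ is recovered, and $\Delta_{W'}=\{\alpha_1,\alpha_2\}$ and $\chi(W')=\{s_1,s_2\}$ with $s_2=s_1 s_{\alpha_{1,1}}s_1$. The relabelling between the indices $1$ and $2$ is harmless, because the roles of $s_1$ and $s_2$ are symmetric.

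\textbf{Main obstacle.} The step that needs the most care is the degenerate situation of Lemma~\ref{lem:InvDi}, where the segment might wrap through both $\Phi^1_{W'}$ and $\Phi^2_{W'}$. This happens in the finite odd case, where $\alpha_{1,(m-1)/2}=\alpha_{2,(m-1)/2}$. I would rule it out as follows. Proposition~\ref{prop:PrefDepth} says $\alpha_t$ is the unique depth maximum of $\Phi(p_t)$. Lemma~\ref{lem:DepthDi} then forbids both $\alpha_1$ and $\alpha_2$ lying in $\Phi_{W'}(u_t)$ unless $\alpha_t$ is the middle root. Even then, a $t$-prefix can contain only one of the two chains. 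So the uniqueness claims on smallest depth hold.
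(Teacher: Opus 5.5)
Your proof is correct and follows the paper's route. The paper states the theorem as a direct consequence of Proposition~\ref{prop:PrefDihed} and Lemma~\ref{lem:InvDi}, and you combine these with Lemma~\ref{lem:DepthDi} and the identification $\alpha_{1,1}=s_1(\alpha_2)$ in exactly the intended way. Your closing ``main obstacle'' paragraph is superfluous, because Proposition~\ref{prop:PrefDihed} (which you already invoked) rules out segments that wrap from $\Phi^1_{W'}$ into $\Phi^2_{W'}$. As a standalone argument that paragraph would also be incomplete: for even $m$ (including $m=2$) Lemma~\ref{lem:DepthDi} cannot compare $\alpha_{1,m/2-1}$ with $\alpha_{2,m/2-1}$, segments crossing the middle without containing both simple roots are not treated, and the final claim that a $t$-prefix meets only one chain is asserted rather than proved. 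You can simply drop it.
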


The advantage of the algorithm described in the theorem above is that one has only to compute the roots $\Phi(p_r)$ and $\Phi(p_t)$, and not to compute first a subset of positive roots up to a certain depth.

 \begin{Example} Let $(W,S)$ be of type 
\begin{center}
\begin{tikzpicture}
	[scale=2,
	 q/.style={teal,line join=round},
	 racine/.style={blue},
	 racinesimple/.style={blue},
	 racinedih/.style={blue},
	 sommet/.style={inner sep=2pt,circle,draw=black,fill=blue!40,thick,anchor=base},
	 rotate=0]
 \tikzstyle{every node}=[font=\small]
\def\grosseursimple{0.025}
\coordinate (ancre) at (0,3);

\node[sommet,label=above:$1$] (a2) at ($(ancre)+(0.25,0.4)$) {};
\node[sommet,label=below right :$3$] (a3) at ($(ancre)+(0.5,0)$) {} edge[thick] node[auto,swap,right] {}(a2) ;
\node[sommet,label=below left:$2$] (a4) at (ancre) {} edge[thick] node[auto,swap,below] {$4$} (a3) edge[thick] node[auto,swap,left] {} (a2);
\end{tikzpicture}
\end{center}

Take $t=3132313$ and $r=3123213$, and let $W'=\langle t,r\rangle$. Consider the $t$-prefix $p_t=3132$ and the $r$-prefix $p_r=3123$. In order to apply the algorithm of Theorem \ref{prop:algo}, we need to compute the sets $\Phi_{W'}(p_r)$ and $\Phi_{W'}(p_t)$. First of all, the inversion sets of $p_t$ and $p_r$ are, respectively, $\Phi(p_t)=\{\alpha_3, \alpha_1+\alpha_3, \alpha_1, \alpha_t\}$ and $\Phi(p_r)=\{\alpha_3, \alpha_1+\alpha_3, 31(\alpha_2),\alpha_t\}$. (By Lemma \ref{lem:DepthDi}, we just need to compute the roots in $\Phi^+_{W'}$ of depth at most $\max(\dep(\alpha_t), \dep(\alpha_r))$). Moreover, $r(\alpha_t)=3123213(313(\alpha_2))=312323(\alpha_2)=313232(\alpha_2)=312(\alpha_2)=-31(\alpha_2)$, hence we have $31(\alpha_2)\in \Phi_{W'}^+\cap \Phi(p_r)=\Phi_{W'}(p_r)$. Indeed, we have $\Phi_{W'}(p_r)=\{31(\alpha_2), \alpha_r\}$, where $\dep(31(\alpha_2))=2$ and $\dep(\alpha_r)=3$. Now we are in the situation of Theorem \ref{prop:algo}(2): since $\dep(31(\alpha_2))<\dep(\alpha_r)$, then $\Delta_{W'}=\{31(\alpha_2), 31213(\alpha_r)\}=\{\alpha_1, 31(\alpha_2)\}$, and $\chi(W')=\{1, 31213\}$.
\end{Example}

\subsection*{On computing the canonical generators without a geometric representation}

In the SageMath~\cite{sage} program we wrote, we used only the abstract definition of $(W,S)$ together with its set of reflections $T$. The {\em $T$-inversion set} of $w\in W$ is well-known to be:
$$
T(w)=\{r \in T\mid \ell(rw)<\ell(w)\}\quad \text{and}\quad \Phi(w)=\{\alpha_r\mid r \in T(w) \}.
$$

Instead of considering the dominance order on positive roots, Proposition~\ref{prop:PrefDom} allows to consider dominance order on the set of reflections $T$. Given $t \in T$: 
\begin{enumerate}
    \item we compute a reduced word for $t$ and take the corresponding $t$-prefix $p_t$;
    \item we determine the $T$-inversion set $T(p_t)$ of $p_t$;  
    \item we construct the {\em $T$-dominance set} via:
$$
\dom_T(t)=\{t\}\sqcup \{ r\in T(p_t) \textrm{ such that }|\langle r,t\rangle| =\infty\}.
$$
\end{enumerate} 

Now let  $r,t\in T$ with $r\not = t$ and let $W'=\langle r,t\rangle$ be the dihedral reflection subgroup generated by $r$ and $t$. Let $p_r$ be a $r$-prefix and $p_t$ be a $t$-prefix. Then the conditions in Theorem~\ref{prop:algo} are checked on the sets $T(p_r)\cap W'$ and $T(p_t)\cap W'$ and $\dep(\cdot)$ computed on the roots is replaced by $\ell(\cdot)$ computed on the reflections in $T(p_r)\cap W'$ or $T(p_t)\cap W'$.

\begin{Remark}[On maximal dihedral reflection subgroups] For computing the canonical generators of the {\em maximal dihedral reflection subgroup} $W'$ containing $t,r$, we need a geometric representation to insure maximality, that is, $$
\Phi_{W'}=\Phi\cap(\mathbb R\alpha_r\oplus\mathbb R\alpha_t).
$$
Then we use the algorithm described above with 
$$
T_{W'}(p_r)=\{u\mid \alpha_u\in \Phi_{W'}(p_r)\}\ \text{ and }\  T_{W'}(p_t)=\{u\mid \alpha_u\in \Phi_{W'}(p_t)\}.
$$ 
where  $\Phi_{W'}(p_r)=\Phi(p_r)\cap \Phi_{W'}=\Phi(p_r)\cap(\mathbb R\alpha_r\oplus\mathbb R\alpha_t)$ and 
$$
\Phi_{W'}(p_t)=\Phi(p_t)\cap \Phi_{W'}=\Phi(p_t)\cap(\mathbb R\alpha_r\oplus\mathbb R\alpha_t).
$$
\end{Remark}


\subsection{Recognizing the language of reflection-prefixes}\label{ss:Automaton}
To conclude this section, we describe an automaton having as states the set $L_m$ of $m$-low elements (see \S\ref{ss:mSmallRoots}). This automaton recognizes the language $\Pref_T$ of reduced words for reflection-prefixes:
\begin{eqnarray*}
\Pref_T&=&\{s_1\cdots s_k s_{k+1}\in \Red\mid p=s_1\cdots s_ks_{k+1}\textrm{ is a reflection-prefix} \}\\
&=&\{s_1\cdots s_k s_{k+1}\in \Red\mid \dep(s_1\cdots s_k(\alpha_{s_{k+1}}))=k  \}\\
&=&\{s_1\cdots s_k s_{k+1}\in \Red\mid \ell( s_1\cdots  s_k s_{k+1} s_k\cdots  s_1)=2k+1\}\\
&=&\{s_1\cdots s_k s_{k+1}\in \Red\mid s_1\cdots s_k s_{k+1} s_k\cdots s_1\in \Red\}.
\end{eqnarray*}

This automaton is a directed graph on the vertex set $L_m$ with edges labeled by elements of $S$, such that for any $w \in L_m$ and $s \in S$, there is at most one edge (\textit{transition}) with source $w$ and label $s$ (see \cite[\S 2]{HNW}).

Let $\mathcal{A}:=\mathcal{A}_m(W,S)$ be the automaton such that:
\begin{itemize}
\item the set of states is $L_m$;
\item the final states are $\mathcal{F}=\{su \mid \ell(su)>\ell(u) \mbox{ and } s(\Sigma_m(u))\cap \Sigma_m (u) = \emptyset\}$;
\item the initial state is the identity $e$;
\item the transitions are: $a\in \mathcal{F} \xrightarrow{s} \pi(sa)$, where $\ell(sa)>\ell(a)$ and $\pi_m: W\rightarrow L_m$ is the Garside projection to $L_m$ defined by $\pi_m(w)= \bigvee_R \{g\in L_m\mid g\leq_R w\}$ (see \S\ref{ss:mSmallRoots}).
\end{itemize}

\begin{Example}\label{ex:auta2} 
Consider the Coxeter system $(W,S)$ of type $\widetilde{A}_2$ with Coxeter graph
\begin{center}
\begin{tikzpicture}
	[scale=2,
	 q/.style={teal,line join=round},
	 racine/.style={blue},
	 racinesimple/.style={blue},
	 racinedih/.style={blue},
	 sommet/.style={inner sep=2pt,circle,draw=black,fill=blue!40,thick,anchor=base},
	 rotate=0]
 \tikzstyle{every node}=[font=\small]
\def\grosseursimple{0.025}
\coordinate (ancre) at (0,3);

\node[sommet,label=above:$1$] (a2) at ($(ancre)+(0.25,0.4)$) {};
\node[sommet,label=below right :$3$] (a3) at ($(ancre)+(0.5,0)$) {} edge[thick] node[auto,swap,right] {}(a2) ;
\node[sommet,label=below left:$2$] (a4) at (ancre) {} edge[thick] node[auto,swap,below] {} (a3) edge[thick] node[auto,swap,left] {} (a2);
\end{tikzpicture}
\end{center}

\begin{figure}[ht!]
    \centering
    \includegraphics[width=0.7\linewidth]{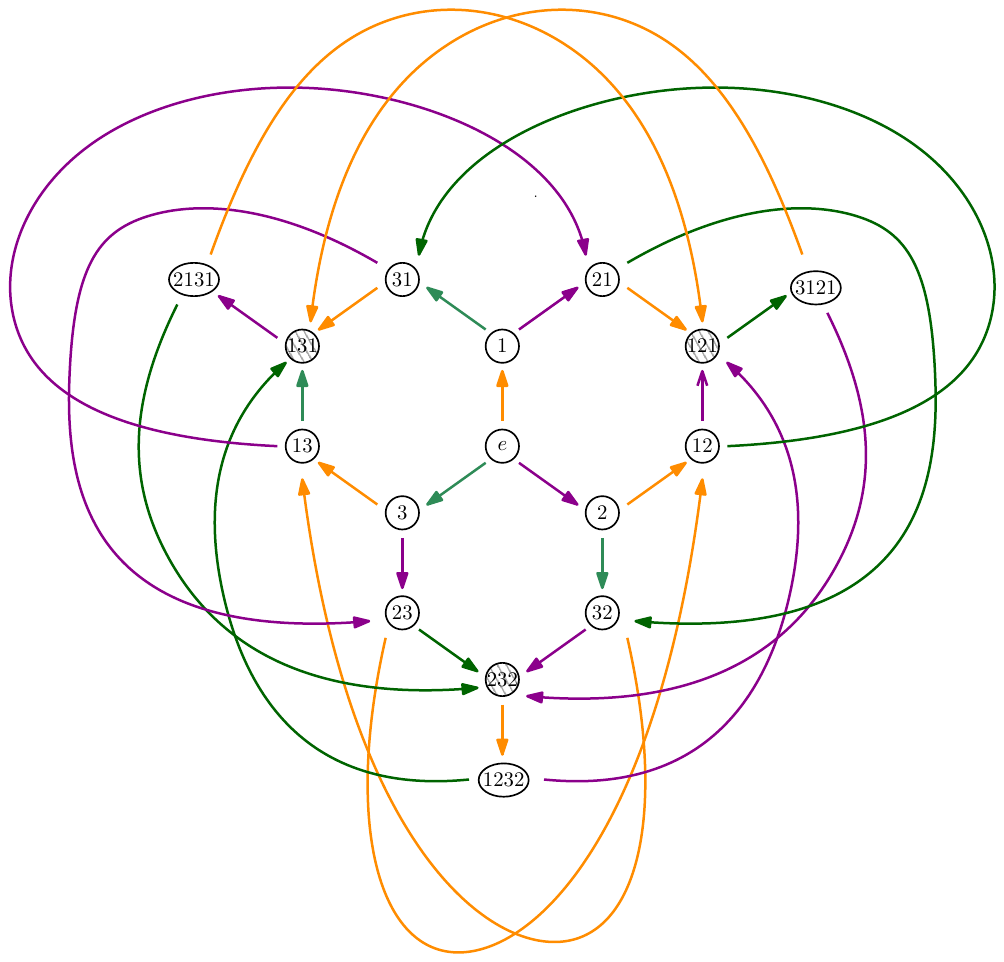}
    \caption{The automaton $\mathcal{A}_0(W,S)$ of Example \ref{ex:auta2} for type $\widetilde{A}_2$ which recognizes the language of reflection-prefixes of $W$.}
    \label{fig:aut pal}
\end{figure}

The 0-low elements of $W$ are $$L_0=\{e, 1, 2, 3, 12, 21, 13, 31, 23, 32, 121, 131, 232, 1232, 2313, 3121\}$$ (see \cite[Example 3.29]{DH}), and the automaton $\mathcal{A}_0(W,S)$ is depicted in Figure \ref{fig:aut pal}, where the shaded states $131, 121, 232$ are not final states. In fact, for instance, $\Sigma_0(21)=\{\alpha_2, \alpha_1+\alpha_2\}=s_1(\Sigma_0(21))$, hence $s_1(\Sigma_0(21))\cap\Sigma_0(21)\neq \emptyset$, so $121\notin \mathcal{F}$. Note that if the shaded states were final, then we would have obtained the Garside shadow automaton on $L_0$. 
\end{Example}

Recall that the set of left descents of $w\in W$ is $D_L(w)=\{s\in S\mid \ell(sw)<\ell(w)\}$. This first lemma is a consequence of results obtained  in~\cite{HNW} by the second author, Nadeau and Williams. 

\begin{Lemma}\label{lem:1}
Let $s_1\cdots s_{k+1}\in\Pref_T$. Then 
$$
D_L(\pi_m(s_{k+1}\cdots s_1))=D_L(s_{k+1}\cdots s_1)=\{s_{k+1}\}.
$$
\end{Lemma}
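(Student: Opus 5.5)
The plan has two steps: first compute $D_L(s_{k+1}\cdots s_1)$ directly, then show that the Garside projection $\pi_m$ preserves left descents.

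For the first equality, let $p=s_1\cdots s_{k+1}$. It is a reflection-prefix, so by Proposition~\ref{prop:Pref1} it is a reduced word with $D_R(p)=\{s_{k+1}\}$. Taking inverses, $p^{-1}=s_{k+1}\cdots s_1$ is reduced and $D_L(p^{-1})=D_R(p)=\{s_{k+1}\}$. This gives the second equality of the statement.

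For the first equality, write $w=s_{k+1}\cdots s_1$. I will show that $D_L(\pi_m(w))=D_L(w)$ for every $w\in W$; this holds for any Garside shadow projection, as used in \cite{HNW}. There are two inclusions.

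\emph{The inclusion $D_L(w)\subseteq D_L(\pi_m(w))$.} Let $s\in D_L(w)$, so $s\leq_R w$. Since $S\subseteq L_m$ and $\pi_m(w)$ is the join of all elements of $L_m$ below $w$, we get $s\leq_R\pi_m(w)$. Hence $s\in D_L(\pi_m(w))$.

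\emph{The inclusion $D_L(\pi_m(w))\subseteq D_L(w)$.} We have $\pi_m(w)\leq_R w$. So if $s\leq_R\pi_m(w)$, then $s\leq_R w$ by transitivity, that is, $s\in D_L(w)$.

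This second inclusion uses only that $\pi_m(w)$ is a prefix of $w$, which holds by the definition of the join: the set $\{g\in L_m\mid g\leq_R w\}$ is bounded by $w$. Alternatively, one can argue with inversion sets: $\alpha_s\in\Phi(\pi_m(w))$ if and only if $\alpha_s\in\Phi(w)$, because simple roots are small and hence $m$-small, and Proposition~\ref{prop:mSmall} gives $\Sigma_m(\pi_m(w))=\Sigma_m(w)$. Either way the equality of left descent sets follows, and combining it with the first step gives the lemma.

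The only point needing care is the identification $s\in D_L(u)\iff s\leq_R u$, which is the definition of left descent recalled in \S\ref{sec:root-poset}. I expect no real obstacle.
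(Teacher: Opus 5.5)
Your proposal is correct and follows the paper's proof. Proposition~\ref{prop:Pref1} gives $D_L(u)=D_R(u^{-1})=\{s_{k+1}\}$ for $u=s_{k+1}\cdots s_1$, and the equality $D_L(\pi_m(u))=D_L(u)$ is what the paper imports from \cite[Proposition~2.6]{HNW}; you instead prove it inline from $S\subseteq L_m$ and $\pi_m(u)\leq_R u$, which is a valid self-contained argument.
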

\begin{proof}
Write $u= s_{k+1}\cdots s_1$. By \cite[p.438, proof of Theorem~1.2]{HNW}, The word $s_{1} \cdots s_{k+1}$ is recognized in the state $\pi_m(u)$ and $u^{-1}$ is a reflection-prefix. By Proposition~\ref{prop:Pref1}, $D_L(u)=D_R(u^{-1})=\{s_{k+1}\}$. Therefore, by \cite[Proposition~2.6]{HNW}, we have $D_L(\pi_m(u))=D_L(u)=\{s_{k+1}\}$.
\end{proof}

\begin{Lemma}\label{lem:2}
Let $s_1\cdots s_{k+1}\in \Red$, then $s_1\cdots s_{k+1}\in\Pref_T$ if and only if $\pi_m(s_{k+1}\cdots s_{1})$ is a final state.
\end{Lemma}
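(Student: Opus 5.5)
Throughout, write $s=s_{k+1}$, $v=s_k\cdots s_1$ and $u=sv=s_{k+1}\cdots s_1$. Then $u^{-1}=s_1\cdots s_{k+1}$ and $\ell(sv)>\ell(v)$. The plan is first to turn membership in $\Pref_T$ into a condition on $\Phi(u)$. Then we transport that condition to the $m$-small inversion set using Proposition~\ref{prop:mSmall} ($\Sigma_m(\pi_m(u))=\Sigma_m(u)$). Finally we read off the condition defining $\mathcal F$ on the state $\pi_m(u)$.

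\textbf{Step 1 (reformulation via inversion sets).} By the description of $\Pref_T$ above, $s_1\cdots s_{k+1}\in\Pref_T$ if and only if $v^{-1}\cdot u$ is a reduced product. We use the standard criterion that $xy$ is reduced if and only if $\Phi(x^{-1})\cap\Phi(y)=\emptyset$. With $x=v^{-1}$ and $y=u$, this becomes $\Phi(v)\cap\Phi(u)=\emptyset$. Since $\Phi(u)=\{\alpha_s\}\sqcup s(\Phi(v))$ and $\alpha_s\notin\Phi(v)$, the condition is equivalent to $\Phi(v)\cap s(\Phi(v))=\emptyset$. Since $s(\alpha_s)=-\alpha_s$, it is also equivalent to the symmetric statement
$$s(\Phi(u))\cap\Phi(u)=\emptyset .$$

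\textbf{Step 2 (the state).} Put $a=\pi_m(u)\in L_m$. Because $\alpha_s\in\Sigma_0(u)\subseteq\Sigma_m(u)=\Sigma_m(a)$, we get $s\le_R a$, so $a=sa'$ with $\ell(sa')>\ell(a')$ and $\Sigma_m(a)=\Sigma_m(u)$. In the direction where $u^{-1}\in\Pref_T$, Lemma~\ref{lem:1} also gives $D_L(a)=\{s\}$. Membership $a\in\mathcal F$ means $s(\Sigma_m(a'))\cap\Sigma_m(a')=\emptyset$. We need to relate this to $\Sigma_m(a)$. First, $\Phi(a)=\{\alpha_s\}\sqcup s(\Phi(a'))$. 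Second, $s$ preserves $\dep_\infty$ on roots $\beta$ with $|B(\beta,\alpha_s)|<1$, by Proposition~\ref{prop:inf-depth}. From these two facts we aim to show that the final-state condition is equivalent to $s(\Sigma_m(a))\cap\Sigma_m(a)=\emptyset$, i.e.\ to $s(\Sigma_m(u))\cap\Sigma_m(u)=\emptyset$.

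\textbf{Step 3 (from $\Phi$ to $\Sigma_m$).} One direction is trivial: if $s(\Phi(u))\cap\Phi(u)=\emptyset$, then the same holds for the subset $\Sigma_m(u)$. For the converse, suppose $\beta,s(\beta)\in\Phi(u)$. We want to produce an $m$-small witness. Here we use that $a$ is $m$-low, so $\Phi(a)=\cone(\Sigma_m(a))$, together with $\Sigma_m(a)=\Sigma_m(u)$. First we would check that $\beta,s(\beta)$ can be taken in $\Phi(a)$. The intended route is to take such a pair with $\dep(\beta)$ minimal and show it already lies in $\Phi(a)$, perhaps using Lemma~\ref{lem:1} and the fact that $a\le_R u$ collects all $m$-small inversions. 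Then we write $\beta=\sum c_\gamma\gamma$ with $\gamma\in\Sigma_m(a)$ and $c_\gamma>0$, and apply $s$. Since $s(\Phi(a))\cap\Phi(a)$ is convex-closed, we hope to conclude that some $\gamma\in\Sigma_m(a)$ satisfies $s(\gamma)\in\Phi(a)$. If so, $s(\gamma)\in\Sigma_m$ as well: a long pair $\gamma,s(\gamma)$ would force $\alpha_s$ to be dominated, contradicting depth considerations. We would try this first via the dominance order (Proposition~\ref{prop:dom}) and, failing that, via the dihedral subgroup $\langle s,s_\beta\rangle$ and Lemma~\ref{lem:InvDi}.

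\textbf{Expected main obstacle.} The main difficulty is Step 3's converse, i.e.\ showing that a bad pair in $\Phi(u)$ forces a bad pair among the $m$-small roots of $\pi_m(u)$. This transfer is the genuinely nontrivial input. The other steps are bookkeeping with inversion sets and Proposition~\ref{prop:mSmall}.
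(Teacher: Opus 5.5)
\textbf{Steps 1 and 2 are correct.} Your bookkeeping holds: $s_1\cdots s_{k+1}\in\Pref_T$ iff $\Phi(v)\cap s(\Phi(v))=\emptyset$ iff $s(\Phi(u))\cap\Phi(u)=\emptyset$. For $a=\pi_m(u)=sa'$, the decomposition $\Phi(a)=\{\alpha_s\}\sqcup s(\Phi(a'))$ gives, purely formally, $s(\Sigma_m(a))\cap\Sigma_m(a)=s(\Sigma_m(a'))\cap\Sigma_m(a')$; Proposition~\ref{prop:inf-depth} is not needed for this. Together with Proposition~\ref{prop:mSmall}, everything reduces to the equivalence $s(\Phi(u))\cap\Phi(u)=\emptyset \iff s(\Sigma_m(u))\cap\Sigma_m(u)=\emptyset$. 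Routing through $u$ and $\pi_m(u)$ is actually cleaner than the paper's write-up. The paper uses $\pi_m(s_{k+1}\cdots s_1)=s_{k+1}\pi_m(s_k\cdots s_1)$, which fails in general because the right side need not lie in $L_m$.

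\textbf{The gap is Step 3.} That equivalence is the entire content of the lemma; the paper imports it from de Man~\cite[Lemma~2.4]{DM}. Your Step 3 leaves it open, and the route you sketch does not work as written:
\begin{enumerate}
\item Lemma~\ref{lem:1} presupposes $s_1\cdots s_{k+1}\in\Pref_T$, which is exactly what fails once a bad pair exists, so it cannot help there.
\item Nothing you invoke places a bad pair of $\Phi(u)$ inside $\Phi(a)$, and that is where the real work lies.
\item Closedness of $s(\Phi(a))\cap\Phi(a)$ under positive combinations says nothing about the summands $\gamma$ of $\beta=\sum c_\gamma\gamma$. For that you would need co-closedness, i.e.\ that $\Phi^+\setminus\Phi(a')$ is closed.
\end{enumerate}

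\textbf{The missing idea} is a minimality argument with the dominance order, which needs no lowness at all. Among pairs with $\beta,s(\beta)\in\Phi(v)$, choose one minimizing $\dep(\beta)+\dep(s(\beta))$. Suppose $\beta\notin\Sigma$; then Theorem~\ref{th:small-dp} gives $\gamma\neq\beta$ with $\gamma\le_d\beta$.
\begin{itemize}
\item $\gamma\in\Phi(v)$ by the definition of $\le_d$.
\item $\gamma\neq\alpha_s$, since $\alpha_s\notin\Phi(v)$.
\item $s(\gamma)\le_d s(\beta)$: if $s(\beta)\in\Phi(w)$ then $\beta\in\Phi(sw)$, hence $\gamma\in\Phi(sw)$, hence $s(\gamma)\in\Phi(w)$. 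In particular $s(\gamma)\in\Phi(v)$.
\item Proposition~\ref{prop:dom} gives $\dep(\gamma)<\dep(\beta)$ and $\dep(s(\gamma))<\dep(s(\beta))$, contradicting minimality.
\end{itemize}
Symmetrically $s(\beta)\in\Sigma$. So $\beta,s(\beta)\in\Sigma_0(v)\subseteq\Sigma_m(v)$, and since $\Phi(u)=\{\alpha_s\}\sqcup s(\Phi(v))$, both lie in $\Sigma_m(u)$.

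\textbf{One further point} for the direction ``final $\Rightarrow\Pref_T$''. The set $\mathcal F$ only asks for some decomposition $a=s'a''$ with $s'(\Sigma_m(a''))\cap\Sigma_m(a'')=\emptyset$. The equivalence just proved, applied to $(a'',s')$, makes $a^{-1}$ a reflection-prefix. Proposition~\ref{prop:Pref1} then gives $D_L(a)=\{s'\}$, and since $s\in D_L(a)$ you get $s'=s$, which you need before running your chain.
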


\begin{proof}
By Proposition~\ref{prop:mSmall}, $\Sigma_m(\pi_m(u))=\Sigma_m(u)$ for any $u\in W$. By Lemma \cite[Lemma 2.4]{DM}, $$\dep(s_1\cdots s_{k}(\alpha_{k+1}))=k \ \mbox{if and only if} \ s_{k+1}(\Sigma_m(s_k\cdots s_1))\cap \Sigma_m(s_k\cdots s_1)=\emptyset,$$ hence $s_{k+1}(\Sigma_m(\pi_m(s_k\cdots s_1)))\cap \Sigma_m(\pi_m(s_k\cdots s_1))=\emptyset$ if and only if $\pi_m s_{k+1}\cdots s_{1})=s_{k+1}\pi_m(s_{k}\cdots s_{1})\in \mathcal{F}$.
\end{proof}

By Lemma \ref{lem:1} and Lemma \ref{lem:2} we proved the following result.
\begin{Theorem}
The automaton $\mathcal{A}$ recognizes the language $\Pref_T$.
\end{Theorem}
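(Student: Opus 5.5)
We argue by induction on the length of a reduced word. The claim is that a word $s_1\cdots s_{k+1}\in S^*$ is accepted by $\mathcal A$ if and only if it is a reduced word whose product is a reflection-prefix. The automaton is read so that, after reading $s_1,\dots,s_j$, the current state is $\pi_m(s_j\cdots s_1)$. This is the convention of the Garside shadow automaton $\mathcal A_{L_m}$ of Theorem~\ref{thm:GarsideAutomaton}, whose transitions $a\xrightarrow{s}\pi_m(sa)$ with $\ell(sa)>\ell(a)$ are exactly those of $\mathcal A$, restricted to source states in $\mathcal F$. From the proof of Theorem~1.2 in \cite{HNW}, recalled in Lemma~\ref{lem:1}, we take two facts: a word is read completely in $\mathcal A_{L_m}$ if and only if it is reduced, and in that case it ends in the state $\pi_m(u)$ with $u=s_{k+1}\cdots s_1$.

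\textbf{Acceptance implies $\Pref_T$.} Suppose $\mathcal A$ reads $s_1\cdots s_{k+1}$ and ends in a final state. Since $\mathcal A$ is a subautomaton of $\mathcal A_{L_m}$, the word is reduced. Moreover, every intermediate state $\pi_m(s_j\cdots s_1)$ for $1\le j\le k+1$ is final: the intermediate ones are sources of transitions, which must lie in $\mathcal F$, and the last one is final by acceptance. Apply Lemma~\ref{lem:2} to each prefix $s_1\cdots s_j$. It gives $\dep(s_1\cdots s_{j-1}(\alpha_{s_j}))=j-1$ for every $j$, and the case $j=k+1$ is precisely the membership condition for $\Pref_T$ in the description given above.

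\textbf{$\Pref_T$ implies acceptance.} Conversely, let $s_1\cdots s_{k+1}\in\Pref_T$. Every prefix $s_1\cdots s_j$ is again in $\Pref_T$. Indeed, by Proposition~\ref{prop:PrefRoot}(2), $s_j\cdots s_{k}(\alpha_{s_{k+1}})$ lies on a saturated chain, so $s_j\cdots s_{k+1}s_k\cdots s_j$ is reduced. The point is to transfer this to the palindrome built on the prefix $s_1\cdots s_j$ itself; this is the step I expect to be the main obstacle, since it needs a left-right symmetry rather than a suffix statement. First I would try the palindrome symmetry $t^{-1}=t$ together with Remark~(3) on $t$-suffixes: the reversed word $s_{k+1}\cdots s_1$ is a $t$-suffix, and one needs its suffixes to be $t'$-suffixes. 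Should this fail, I would instead use the sharper form of Lemma~\ref{lem:2} only at the last step, and run the induction on the language of reduced words all of whose prefixes lie in $\Pref_T$.

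Granting the prefix claim, Lemma~\ref{lem:2} shows that each state $\pi_m(s_j\cdots s_1)$ is in $\mathcal F$. Each step is also length-increasing, since the word is reduced, so every transition is available in $\mathcal A$. Lemma~\ref{lem:1} guarantees consistency of the states: the state reached has left descent set $\{s_j\}$, matching the form $su\in\mathcal F$. Hence the whole word is read and ends in a final state. Combining the two directions, the language recognized by $\mathcal A$ is exactly $\Pref_T$.
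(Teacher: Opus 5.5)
There is a genuine gap, exactly at the step you flagged: $\Pref_T$ is not closed under taking prefixes. Two examples show this.
In Example~\ref{ex:a3} (type $A_3$), $132\in\Pref_T$ because $13231=12321$ is reduced. Yet its prefix $13$ is not in $\Pref_T$, since $131=3$.
In the $\widetilde{A}_2$ case of Example~\ref{ex:auta2}, the chain $\alpha_3\lhd s_1(\alpha_3)\lhd s_2s_1(\alpha_3)\lhd s_1s_2s_1(\alpha_3)$ is saturated, so $1213\in\Pref_T$ by Proposition~\ref{prop:PrefRoot}(1). Its prefix $121$, however, has two right descents, so it is not a reflection-prefix.
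Proposition~\ref{prop:PrefRoot}(2) only gives closure under the suffixes $s_j\cdots s_{k+1}$. Your $t$-suffix reformulation is the same prefix claim after inversion, so neither can supply what you need.

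Worse, with your reading of $\mathcal A$ (transitions only out of states in $\mathcal F$, which is the literal wording of the definition), the theorem is false.
Reading $132$ reaches the state $\pi_m(s_3s_1)=s_3s_1=s_1s_3$, since $L_m=W$ in finite type. This state is not in $\mathcal F$, because $s_3$ fixes $\alpha_1$ and $s_1$ fixes $\alpha_3$. So $132$ is rejected.
Similarly, $1213$ is blocked at the shaded non-final state $121$.
Your fallback would therefore only prove that $\mathcal A$ recognizes the smaller language of reduced words all of whose prefixes lie in $\Pref_T$. Read fully literally, the restriction even blocks the initial state, since $e\notin\mathcal F$.

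The automaton has to be read as $\mathcal A_{L_m}$ with all of its transitions, with only the set of final states replaced by $\mathcal F$. This is what Example~\ref{ex:auta2} says: declaring the shaded states final gives back the Garside shadow automaton. It is also what the proof of Lemma~\ref{lem:1} uses.
With that reading, the proof is the paper's one-step argument and needs no induction:
\begin{itemize}
\item By \cite{HNW}, a word is read completely if and only if it is reduced, and it then ends in the state $\pi_m(s_{k+1}\cdots s_1)$.
\item Lemma~\ref{lem:2}, applied once to the whole word, says this state is final if and only if $s_1\cdots s_{k+1}\in\Pref_T$. Lemma~\ref{lem:1} fixes $s=s_{k+1}$ in the decomposition $su$ defining $\mathcal F$.
\end{itemize}
Intermediate states play no role. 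Your first direction is therefore fine, but it also only needs Lemma~\ref{lem:2} at the last step.
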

  
 As a consequence of the above theorem and Proposition~\ref{prop:PrefRoot} we obtain the following corollary.

\begin{Corollary}\label{cor:Refregular}
The language $\Pref_T$ is regular and the generating function 
\begin{align*}
\sum_{u\in \Pref_T} q^{|u|}& =\sum_{t\in T} |\{s_1 \cdots s_k s_{k+1} s_k\cdots s_1 \in \Pref_T \mid t=s_1 \cdots s_k s_{k+1}s_k\cdots s_1\}| \, q^{\ell(t)}\\
&=\sum_{k\in\mathbb{N}} a_k q^k
\end{align*}
is rational, where $a_k$ is the number of saturated chains of length $k$ in the root poset of $W$. In particular, $\RPal(q)=q\Pref_T(q^2)$ is rational.
\end{Corollary}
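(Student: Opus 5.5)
The plan is to combine the preceding theorem, that $\mathcal{A}=\mathcal{A}_m(W,S)$ recognizes $\Pref_T$, with the standard fact that a finite deterministic automaton yields a regular language and a rational generating function. First, $\mathcal{A}$ is finite because its state set is $L_m$, which is a finite Garside shadow. It is deterministic because from a state $a$ and a letter $s$ there is at most one transition, namely to $\pi_m(sa)$. Hence $\Pref_T$, being the language accepted by $\mathcal{A}$, is regular.

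Next, rationality. The words in $\Pref_T$ are reduced, so $q^{|u|}=q^{\ell(u)}$ for $u\in\Pref_T$. Let $M$ be the transfer matrix of $\mathcal{A}$, indexed by $L_m$, with $M_{a,b}$ equal to the number of letters $s$ giving a transition $a\xrightarrow{s}b$. Since $\mathcal{A}$ is deterministic, each accepted word corresponds to exactly one path from $e$ to a final state. Therefore
$$\sum_{u\in\Pref_T}q^{|u|}=\mathbf e_e^{T}(I-qM)^{-1}\mathbf 1_{\mathcal F},$$
and Cramer's rule makes this a rational function of $q$.

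Then I would establish the combinatorial identities.
\begin{itemize}
\item The first equality groups the words $s_1\cdots s_{k+1}\in\Pref_T$ according to the reflection $t=s_1\cdots s_ks_{k+1}s_k\cdots s_1$, which is well defined by Proposition~\ref{prop:Pref1}. Each such word corresponds bijectively to a palindromic reduced word of $t$.
\item For the interpretation of $a_k$, Proposition~\ref{prop:PrefRoot} gives a bijection between reduced words $s_1\cdots s_kr$ of reflection-prefixes and saturated chains $\alpha_r\lhd s_k(\alpha_r)\lhd\cdots\lhd s_1\cdots s_k(\alpha_r)$ in $(\Phi^+,\leq)$ starting at a simple root. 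Part (1) gives the map from chains to words and part (2) gives its inverse. The chain is recovered from the word and the word from the labels of the covers, which are determined because $s(\alpha)=s'(\alpha)$ forces $s=s'$ in this setting.
\item I must fix the indexing convention. A word of length $k+1$ corresponds to a chain with $k$ covers. So $a_k$ should count saturated chains from the simple roots with the appropriate length shift; I will state that offset explicitly rather than leave it implicit.
\end{itemize}

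Finally, for $\RPal(q)$, the map $s_1\cdots s_{k+1}\mapsto s_1\cdots s_{k+1}s_k\cdots s_1$ is a bijection from $\Pref_T$ onto $\RPal$, by Theorem~\ref{thm:PrefMain}(2) and Proposition~\ref{prop:Pref1}. It sends length $k+1$ to length $2k+1=2(k+1)-1$. Hence $\RPal(q)=q^{-1}\Pref_T(q^2)$ under the word-length grading. This differs from the stated formula $q\,\Pref_T(q^2)$ by a power of $q$, which comes from the same index shift (counting by $k$ rather than $k+1$). Either way the result is rational.

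The main obstacle is bookkeeping rather than mathematics: making the bijection with saturated chains precise, and aligning the exponent conventions between $\Pref_T(q)$, $a_k$ and $\RPal(q)$.
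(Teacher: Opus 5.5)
Your proposal is correct and follows the paper's own one-line argument. Regularity and rationality come from the preceding theorem that the finite deterministic automaton $\mathcal{A}_m(W,S)$ on $L_m$ recognizes $\Pref_T$, and the chain interpretation comes from Proposition~\ref{prop:PrefRoot}, which you correctly turn into a bijection with saturated chains starting at a simple root.

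Your bookkeeping is also right. A word of length $k+1$ in $\Pref_T$ corresponds to a chain with $k$ covers and to a palindromic reduced word of length $2k+1$, so $\sum_{u\in\Pref_T}q^{|u|}=q\sum_k a_kq^k$ and $\RPal(q)=\sum_k a_kq^{2k+1}=q^{-1}\sum_{u\in\Pref_T}q^{2|u|}$. The stated identities therefore hold only up to this shift, which does not affect rationality.
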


\section{Poincare series of affine Coxeter groups}\label{se:affine}
In this section we provide a direct solution to Stembridge’s problem by providing an explicit description of the Poincaré series for affine Coxeter groups. This formula is based on a detailed description of the Hasse diagram of the associated root poset.

\subsection{Crystallographic root systems}

When one works with an affine Coxeter system $(W,S)$, with underlying finite Weyl group $(W_0,S_0)$, it is customary to use its associated crystallographic root system in a positive semi-definite quadratic space $(V,B)$. In this case, the classical geometric representation (see \S\ref{ss:InversionSets}) is the unique geometric representation, and  the simple system $\Delta$ is a basis of $V$. The crystallographic condition allows us to consider root systems with non-necessarily unitary roots. We briefly recall this construction below.

Let $\Phi_0$ be a reduced, irreducible, crystallographic root system for the finite
Weyl group $W_0$, realized in a Euclidean space $(V_0,B_0)$ with corresponding set of simple roots $\Delta_0=\{\alpha_s\mid s\in S_0\}$ and positive roots $\Phi_0^+$ (see for instance \cite{bour,Hum}). Then for $s,t\in S_0$, we have:
\[
B_0(\alpha_s, \alpha_t) = 
\begin{cases} 
-\|\alpha_s\| \|\alpha_t\| \cos \left(\frac{\pi}{m_{st}}\right) & \text{if } m_{st} < \infty, \\
-1 & \text{if } m_{st} = \infty.
\end{cases}
\]
The quadratic space $(V,B)$ is obtained as follows:  $V = V_0 \oplus \mathbb{R}\delta$, and $B$ is the extension of  $B_0$ by setting the radical of $B$ to be equal to  $Q = \mathbb{R}\delta$. Let $\omega$ denote the highest root of $\Phi_0$; the crystallographic affine simple system is $\Delta = \Delta_0 \cup \{ \delta - \omega \}$ with root system 
$\Phi = \{ \alpha + k\delta \mid \alpha \in \Phi_0,\; k \in \mathbb{Z} \}$.
The set of positive roots is:
\[\Phi^+=
\{\,  k\delta + \alpha,\;  (k+1)\delta -\alpha \mid \alpha \in \Phi_0^+,\; k \in \mathbb{N} \,\}=(\mathbb{N}\delta+\Phi^+_0)\sqcup (\mathbb{N}^*\delta-\Phi^+_0).
\]
It follows that $\Phi$ is a {\em crystallographic (infinite) root system} for the affine Weyl group $(W,S)$, where $S=S_0 \cup \{s_{\delta-\omega}\}$.

\subsection{The root poset and dominance order in the affine case} Structural results for unitary root systems, specifically those determined by the sign of the bilinear form $B$, generally extend to crystallographic systems. However, properties that depend on the specific numerical values of the form, such as dominance and long edges, must be precisely reformulated for crystallographic root systems; see~\cite[Example 3.9]{DH} for more details.

From the crystallographic root system, $\Phi$, one obtains a {\em unitary} root system in the sense of \S\ref{ss:InversionSets} by normalizing each root via the map $\alpha\mapsto \frac{\alpha}{\|\alpha\|}$. Let $\alpha,\beta\in\Phi$, then:     
$$
B\left(\frac{\alpha}{\|\alpha\|},\frac{\beta}{\|\beta\|}\right) = \frac{B(\alpha,\beta)}{\|\alpha\|\|\beta\|}.
$$
Therefore, the condition $B(\alpha',\beta')\leq -1$ for unitary roots $\alpha',\beta'$, which appear in Definition~\ref{def:long} or Proposition~\ref{prop:inf-depth}, becomes:  $$B(\alpha,\beta)\leq -\|\alpha\|\|\beta\|.$$ 
For this reason Definition~\ref{def:long} in the crystallographic setting becomes the following.

\begin{Definition}[Long and short edges in crystallographic root system] A covering $\alpha\lhd s(\alpha)$ in the root poset $(\Phi^+,\leq)$ is \textit{short} if $-\|\alpha\| \|\alpha_s\| < B(\alpha, \alpha_s)<0$. Otherwise we say that the covering is \textit{long}.
\end{Definition}

The Hasse diagram of the root poset is studied in more details in the subsequent section. 

\smallskip
In the affine case, the Hasse diagram of the dominance order is the union of $2|\Phi_0^+|$ total ordered chains. Indeed, the only dominance relations in  $\Phi$ are of the form
$$
 k\delta +\alpha \;\leq_d\;  l\delta+\alpha,
\quad
\text{or} \quad 
 (k+1)\delta -\alpha\;\leq_d\;  (l+1)\delta - \alpha,
$$
where $k\leq l$ in $\mathbb N$ and $\alpha\in\Phi_0^+$; see~\cite[Example 3.9]{DH}. 

An interesting consequence of these dominance relations is that for $\alpha\in\Phi_0^+$ and $k\in\mathbb N$ we have:
$$
\dep_\infty(k\delta+\alpha)=k\quad\textrm{and}\quad\dep_\infty((k+1)\delta-\alpha)=k,
$$
see~\cite[Example 3.9, Eq.($\diamond$)]{DH}. In particular, the set of $k$-small roots (see Section \ref{ss:mSmallRoots}) for $k\in\mathbb N$ is in the affine case:
\begin{equation}
\label{eq:mSmall}
\Sigma_k=\left(k\delta+\Phi^+_0\right)\sqcup \left((k+1)\delta-\Phi_0^+\right).
\end{equation}

\begin{Example}
Let $(W,S)$ be a Coxeter system of type $\widetilde{A}_2$, 

\begin{center}
\begin{tikzpicture}
	[scale=2,
	 q/.style={teal,line join=round},
	 racine/.style={blue},
	 racinesimple/.style={blue},
	 racinedih/.style={blue},
	 sommet/.style={inner sep=2pt,circle,draw=black,fill=blue!40,thick,anchor=base},
	 rotate=0]
 \tikzstyle{every node}=[font=\small]
\def\grosseursimple{0.025}
\coordinate (ancre) at (0,3);

\node[sommet,label=above:$s_{\delta-\omega}$] (a2) at ($(ancre)+(0.25,0.4)$) {};
\node[sommet,label=below right : $s_2$] (a3) at ($(ancre)+(0.5,0)$) {} edge[thick] node[auto,swap,right] {}(a2) ;
\node[sommet,label=below left:$s_1$] (a4) at (ancre) {} edge[thick] node[auto,swap,below] {} (a3) edge[thick] node[auto,swap,left] {} (a2);
\end{tikzpicture}
\end{center}
\noindent
then the underlying finite Coxeter system $(W_0,S_0)$ is of type $A_2$. Consider $S=\{s_1, s_2, s_{\delta-\omega}\}$, where $s_1,s_2\in S_0$ and $\omega=\alpha_1+\alpha_2$ is the highest root of $W_0$. The finite positive root system is $\Phi^+_0=\{\alpha_1, \alpha_2, \alpha_1+\alpha_2\}$ and the affine simple system is $\Delta=\{\alpha_1, \alpha_2, \delta-\omega\}$. The set of $k$-small roots for $k\in\mathbb N$ is:
\begin{eqnarray*}
\Sigma_k &=& \{\alpha_1 + k\delta, \alpha_2 + k\delta, \alpha_1+\alpha_2+k\delta\}\\
    &&\sqcup\  \{-\alpha_1+(k+1)\delta, -\alpha_2+(k+1)\delta, -(\alpha_1+\alpha_2)+(k+1)\delta\}. 
\end{eqnarray*}
The affine positive root system is:
$$
\Phi^+=\bigsqcup_{m\in\mathbb N} \Sigma_m.
$$
The Hasse diagram of $(\Phi^+,\leq)$ is depicted in Figure \ref{fig:rootposetA2}, up to depth equal to 3, where each color of the edges corresponds to the simple reflection that is applied: red for $s_1$, yellow for $s_2$ and green for $s_3:=s_{\delta -\omega}$. Moreover, the dashed edges correspond to the long coverings.
\end{Example}

\begin{figure}[ht]
    \centering    \includegraphics[width=0.9\linewidth]{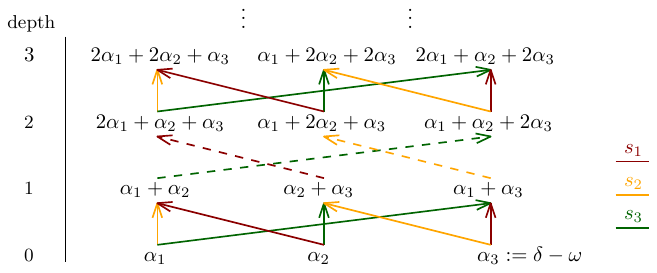}
    \caption{Hasse diagram of the root poset of type $\widetilde{A}_2$ up to depth~3, with $\Sigma=\Sigma_0$ and $\Sigma_1$ that correspond to the blocks connected with non-dotted edges.}
    \label{fig:rootposetA2}
\end{figure}


\begin{Example}
    Let $(W,S)$ be a Coxeter system of type $\widetilde{B}_3$ with the following Coxeter graph
    \begin{center}
\begin{tikzpicture}
	[scale=2,
	 q/.style={teal,line join=round},
	 racine/.style={blue},
	 racinesimple/.style={blue},
	 racinedih/.style={blue},
	 sommet/.style={inner sep=2pt,circle,draw=black,fill=blue!40,thick,anchor=base},
	 rotate=0]
 \tikzstyle{every node}=[font=\small]
\def\grosseursimple{0.025}
\coordinate (ancre) at (0,3);

\node[sommet,label=right:$s_3$] (a2) at ($(ancre)+(0.8,0.25)$) {};
\node[sommet,label=above :$s_2$] (a3) at ($(ancre)+(0.5,0)$) {} edge[thick] node[auto,swap,right] {}(a2) ;
\node[sommet,label=above:$s_1$] (a4) at (ancre) {} edge[thick] node[auto,swap,below] {$4$} (a3);
\node[sommet,label=right :$s_{\delta-\omega}$] (a5) at ($(ancre)+(0.8,-0.25)$) {} edge[thick] node[auto,swap,right] {}(a3) ;
\end{tikzpicture}
\end{center}

\noindent    
    In this case we have $\|\alpha_1\|=1$ and $\|\alpha_2\|=\|\alpha_3\|=\|\omega\|=\sqrt{2}$, where $\omega=2\alpha_1+2\alpha_2+\alpha_3$. For instance, $B(\alpha_2,\delta-\omega)=-B(\alpha_2,\omega)=-1>-\|\alpha_2\|\|\omega\|=-2$, and $B(\delta-\alpha_3, \alpha_3)=-\|\alpha_3\|^2=-2$. Hence the covering $\alpha_2\lhd s_{\delta-\omega}(\alpha_2)$ is short, while $\delta-\alpha_3\lhd s_3(\delta-\alpha_3)=\delta+\alpha_3$ is long. The Hasse diagram of the root poset  restricted to $\Sigma=\Sigma_0$ of type $\widetilde{B}_3$ is in Figure \ref{fig:root-poset-b3}.
\end{Example}

\subsection{Poset isomorphisms within the root poset}

Let $(W,S)$ be an affine Coxeter system with underlying finite system $(W_0,S_0)$ and crystallographic root system $\Phi_0$. Since $\Phi_0$ is irreducible, we have two cases for the decomposition of $\Phi_0$ into $W_0$-orbits:
\begin{itemize}
    \item $\Phi_0=\mo_1$ is a single orbit if $(W,S)$ is simply laced;
    \item $\Phi_0=\mo_1\sqcup\mo_2$ if $(W,S)$ is of type $\tilde B_n$, $\tilde C_n$, $\tilde G_2$ or $\tilde F_4$.
\end{itemize}
For simplification, we write:
$$
\Phi_0^+=\bigsqcup_{i=1}^h \mo_i^+,
$$ 
where $\mo_i^+=\mo_i \cap \Phi_0^+$ and $1\leq i\leq h$ ($h=1,2$). We also assume that $\mo_1^+$ is the $W_0$-orbit containing the highest root $\omega$. 

\smallskip

Let $\mo$ be a $W_0$-orbit. For $k\in\mathbb N$, the set of $k$-small roots in $\mo$ is:
\begin{equation}
\Sigma_{\mo,k} =(k\delta+\mo^+)\sqcup \left((k+1)\delta-\mo^+\right) =\{k\delta + \alpha, (k+1)\delta - \alpha \mid \alpha \in \mo^+\}.
\end{equation}
Moreover, by Eq~(\ref{eq:mSmall}), we have $\Sigma_k=\sqcup_{i=1}^h\Sigma_{\mo_i,k}$, where $h=1,2$. Set 
$$
\Phi_{\mo}=\bigsqcup_{k\in \mathbb N} \Sigma_{\mo,k}.
$$
So $\Phi^+=\sqcup_{i=1}^h\Phi_{\mo_i}$, where $h=1,2$.

The Hasse diagrams of the subposet $(\Phi_{\mo},\leq)$  exhibit strong symmetry properties that are fundamental to prove the main result of this section. 

\subsection*{Symmetry at the level of small roots} We denote for simplification, the set of small root in $\mo$ by:
$$
\Sigma_{\mo}=\Sigma_{\mo,0}=\mo^+\sqcup (\delta-\mo^+).
$$
Recall that $\Sigma$ is an order ideal in the root poset (see~\cite[Corollary~4.7.7]{BB}) and any cover relation within $\Sigma$ is short (see~\cite[Lemma~4.7.5 and Theorem~4.7.6]{BB}).

\begin{Theorem}\label{th:iso1} Let $\mo$ be a $W_0$-orbit. 
The map $\phi: \mo^+ \to \delta - \mo^+$ defined by $\phi(\alpha) = \delta - \alpha$ is an isomorphism of posets between $(\mo^+, \leq)$ and the opposite poset $(\delta - \mo^+, \leq)^{op}$: for any $\alpha, \beta \in \mo^+$, we have $\alpha \leq \beta$ if and only of  $\delta - \beta \leq \delta - \alpha$. Moreover all covering relations in $(\mo^+,\leq)$ and in $(\delta - \mo^+, \leq)$ are short.
\end{Theorem}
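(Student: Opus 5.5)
We work with the cover relations of Proposition~\ref{prop:BasicDepth}(iii): for $\gamma\in\Phi^+$ and $s\in S$ with $\gamma\neq\alpha_s$, $\gamma\lhd s(\gamma)$ holds exactly when $B(\gamma,\alpha_s)<0$. The plan is to check that $\phi$ sends covers in $\mo^+$ to reversed covers in $\delta-\mo^+$ and conversely. Since $\Sigma$ is an order ideal and all its covers are short, comparabilities inside $\mo^+$ and inside $\delta-\mo^+\subseteq\Sigma$ are generated by covers whose intermediate roots stay in $\Sigma$. One still has to check that saturated chains stay inside the given part of the orbit; this is handled in the last step. The key identity is that $\delta$ lies in the radical of $B$, so $B(\delta-\gamma,\alpha)=-B(\gamma,\alpha)$ for every $\alpha$. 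Moreover, every $w\in W$ fixes $\delta$.

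First, covers inside $\mo^+$. Let $\alpha\lhd s(\alpha)$ with $\alpha,s(\alpha)\in\mo^+$. If $s\in S_0$, then $\beta:=s(\alpha)=\alpha-c\,\alpha_s$ with $c=2B(\alpha,\alpha_s)/B(\alpha_s,\alpha_s)<0$. Then $s(\delta-\beta)=\delta-\alpha$, and $B(\delta-\beta,\alpha_s)=-B(\beta,\alpha_s)=B(\alpha,\alpha_s)<0$, so $\delta-\beta\lhd\delta-\alpha$. If instead $s=s_{\delta-\omega}$, then $s(\alpha)=\alpha-c(\delta-\omega)$ involves $\delta$ as soon as $c\neq0$, so $s(\alpha)\notin\Phi_0^+$. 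Hence every cover within $\mo^+$ uses some $s\in S_0$. Symmetrically, let $\delta-\beta\lhd s(\delta-\beta)$ with both roots in $\delta-\mo^+$. Writing $s(\delta-\beta)=\delta-s(\beta)$ for $s\in S_0$, the same computation gives $s(\beta)\lhd\beta$ in $\mo^+$. If $s=s_{\delta-\omega}$, then $s(\delta-\beta)=\delta-\beta-c(\delta-\omega)$, whose $\delta$-coefficient is $1-c$; this forces $c=0$ and hence no cover. So covers within each part come only from $S_0$, and $\phi$ reverses them bijectively. Together with the fact that $\phi$ is a bijection $\mo^+\to\delta-\mo^+$, this gives the poset anti-isomorphism once order relations are known to be generated by covers inside the sets.

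Shortness of the covers should follow from the crystallographic definition. For $s\in S_0$ and $\alpha\neq\pm\alpha_s$ in the finite root system, Cauchy--Schwarz in the Euclidean space $V_0$ gives $|B(\alpha,\alpha_s)|<\|\alpha\|\|\alpha_s\|$, since $\alpha$ and $\alpha_s$ are not proportional. The same bound holds for $B(\delta-\beta,\alpha_s)=-B(\beta,\alpha_s)$, because $\|\delta-\beta\|=\|\beta\|$. Alternatively, it suffices to cite that covers inside $\Sigma$ are short, using $\mo^+\sqcup(\delta-\mo^+)=\Sigma_{\mo,0}\subseteq\Sigma$.

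The main obstacle is that the relation $\leq$ on the subposet is the restriction of the root poset on $\Phi^+$. A chain $\alpha<\beta$ with both ends in $\mo^+$ could a priori pass through roots outside $\mo^+$, for instance in $\delta-\Phi_0^+$ or in the other orbit. To rule out the first possibility, I will note that the $\delta$-coefficient is non-decreasing along covers: a cover by $s_0:=s_{\delta-\omega}$ adds a positive multiple of $\delta-\omega$, while covers by $S_0$ leave the $\delta$-coefficient unchanged. A chain from coefficient $0$ to coefficient $0$ therefore uses only $S_0$ and stays in $\Phi_0^+$. Every step stays in the $W_0$-orbit of $\alpha$, so it stays in $\mo^+$. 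Likewise, a chain between two elements of $\delta-\mo^+$ keeps $\delta$-coefficient $1$ and so uses only $S_0$, which keeps it in $\delta-\mo^+$. Hence $\leq$ restricted to each set is the transitive closure of the covers analysed above, and the theorem follows.
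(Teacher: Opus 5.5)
Your proposal is correct, and its core is the paper's argument. Since $\delta$ lies in the radical of $B$ and is fixed by $W$, the map $\gamma\mapsto\delta-\gamma$ flips the sign of $B(\cdot,\alpha_s)$ and hence reverses covers. Shortness is preserved: you check it by strict Cauchy--Schwarz in $V_0$, while the paper uses $B(\delta-s(\alpha),\alpha_s)=B(\alpha,\alpha_s)$ together with the fact that covers in $\Sigma$ are short.

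The bookkeeping differs, and yours is tighter on two counts.
\begin{itemize}
\item The paper treats $s_{\delta-\omega}$ on the same footing as $S_0$. As you note, however, no $s_{\delta-\omega}$-cover stays inside $\mo^+$ or inside $\delta-\mo^+$. For $\alpha\in\mo^+$ such a cover lands in $\delta-\mo^+$, or outside $\Sigma_{\mo}$ if $\alpha=\omega$. So the paper's case (ii) is really about edges crossing between the two halves. What its computation implicitly shows is the stronger fact that $\gamma\mapsto\delta-\gamma$ reverses every cover inside $\Sigma_{\mo}$, not just those inside each half.
\item The paper takes for granted that the orders induced on $\mo^+$ and on $\delta-\mo^+$ are generated by covers lying inside these sets. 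Your monotonicity of the $\delta$-coefficient along covers supplies exactly that missing justification.
\end{itemize}

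One sentence of that last step needs repair. The claim ``uses only $S_0$, which keeps it in $\delta-\mo^+$'' is not true as stated. For example, $\delta-\alpha_s\lhd s(\delta-\alpha_s)=\delta+\alpha_s$ is an $S_0$-cover that keeps the $\delta$-coefficient equal to $1$ but leaves $\delta-\Phi_0^+$. You can close this in either of two ways:
\begin{itemize}
\item Use the order-ideal remark from your first paragraph: every root on a chain ending at $\delta-\alpha$ lies in $\Sigma$, and $\Sigma\cap(\delta+\Phi_0^+)=\emptyset$.
\item Note that upward $S_0$-covers map $\delta+\Phi_0^+$ into itself, since $s$ permutes $\Phi_0^+\setminus\{\alpha_s\}$ and $\delta+\alpha_s$ can only go down under $s$. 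Hence a chain that enters $\delta+\Phi_0^+$ never returns to $\delta-\Phi_0^+$.
\end{itemize}
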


\begin{proof}
Since a poset isomorphism is uniquely determined by the preservation (or reversal) of covering relations, it suffices to show that $\alpha \lhd \beta$ in $\mo^+$ if and only if $\delta - \beta \lhd \delta - \alpha$ in $\delta - \mo^+$. Recall that for any $\alpha, \beta \in \Phi^+$, a covering $\alpha \lhd \beta$ exists if and only if there is a simple reflection $s \in S$ such that $\beta = s(\alpha)$ and $B(\alpha, \alpha_s) < 0$. We examine the two types of simple reflections in the affine Weyl group $W$.

\begin{itemize}
    \item[(i)] Let $s \in S_0$ be a finite reflection. For a finite simple root $\alpha_s$, we have:
    \[ B(\delta - \alpha, \alpha_s) = B(\delta, \alpha_s) - B(\alpha, \alpha_s) = -B(\alpha, \alpha_s). \]
    It follows that $B(\alpha, \alpha_s) < 0$ if and only if $B(\delta - \alpha, \alpha_s) > 0$. Thus, $\alpha \lhd s(\alpha)$ if and only if  $s(\delta - \alpha) = \delta - s(\alpha) \lhd \delta - \alpha$.

    \item[(ii)] Let $s_{\delta-\omega}$ be the affine reflection. Similarly, for the affine simple root $\delta - \omega$, the property of the radical implies:
    \[ B(\delta - \alpha, \delta - \omega) = B(\delta, \delta - \omega) - B(\alpha, \delta - \omega) = -B(\alpha, \delta - \omega). \]
    Again, the sign of the bilinear form is reversed: $B(\alpha, \delta - \omega) < 0$ if and only if $B(\delta - \alpha, \delta - \omega) > 0$. This ensures that $\alpha \lhd s_{\delta-\omega}(\alpha)$ if and only if $s_{\delta-\omega}(\delta - \alpha) = \delta - s_{\delta-\omega}(\alpha) \lhd \delta - \alpha$.
\end{itemize}

Now, let $\alpha \lhd s(\alpha)$ be a covering in $\mo^+$ for some $s \in S$. The type of this cover depends on the value $B(\alpha, \alpha_s)$. Under the map $\phi$, this corresponds to the cover $\delta-s(\alpha) \lhd \delta -\alpha$. By the properties of the reflection $s$ and of the radical, we have:
\[
B(\delta - s(\alpha), \alpha_s) = -B(s(\alpha), \alpha_s) = B(\alpha, \alpha_s).
\]
Since  $B(\cdot, \alpha_s)$ is identical in both case, the condition for being short or long in Definition \ref{def:long} is preserved. Since $\mo^+$ and $\delta-\mo^+$ decomposes the set of small roots~$\Sigma$, so they are all short. 
\end{proof}

\begin{Example}
    If $(W,S)$ is of type $\widetilde{B}_3$, then $\Phi_0^+=\mo_1^+\sqcup\mo_2^+$, where $\mo_1^+=\{\alpha_1, s_2(\alpha_1), s_3s_2(\alpha_1)\}$ and $\mo_2^+=\{\alpha_2, \alpha_3, s_1(\alpha_2), s_3(\alpha_2), s_1s_3(\alpha_2), s_2s_1s_3(\alpha_2)\}$. The root poset restricted to $\Sigma_{\mo_1}\sqcup\Sigma_{\mo_2}$ is depicted in Figure \ref{fig:root-poset-b3}.
\end{Example}

\begin{figure}[hbtp]
\centering
\includegraphics[scale=.75]{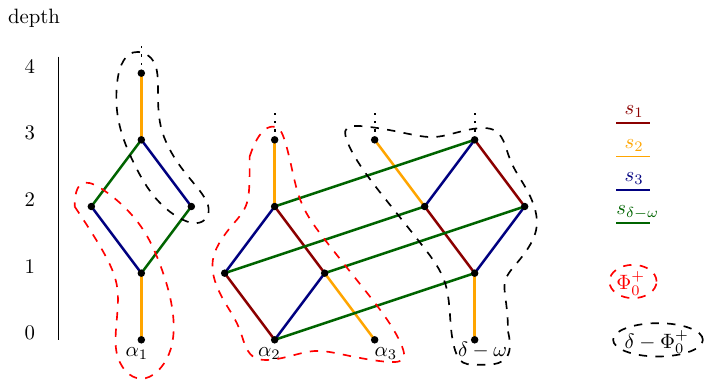}
\caption{Root poset of type $\widetilde{B}_3$ restricted to $\Sigma_{\mo_1}\sqcup \Sigma_{\mo_2}$. The portion of poset in the red lines is isomorphic to opposite in the black lines, as stated in Theorem \ref{th:iso1}.}
\label{fig:root-poset-b3}
\end{figure}

\subsection*{Symmetry of the root poset restricted to a $W_0$- orbit}
Following \cite[Ch. VI, \S 1]{bour}, for $\alpha, \beta\in \Phi^+$, we set 
$$n(\alpha,\beta):=2\frac{B(\alpha,\beta)}{B(\beta,\beta)},$$  
so that $s_\beta(\alpha)=\alpha - n(\alpha,\beta)\beta.$ By \cite[Ch. VI, \S 1, Proposition~8]{bour}, if $\alpha\neq \beta$ and $\|\alpha\|\leq \|\beta\|$, then $n(\alpha,\beta) \in \{0,\pm 1\}$. It follows that $n(\alpha, \omega) \in \{0,\pm 1\}$ when $\alpha\neq \omega$, since $\|\alpha\|\leq \|\omega\|$ for all positive roots $\alpha$. Finally, note that $n(k\delta+\alpha, \beta)=n(\alpha, \beta)=n(\alpha, m\delta+\beta)$ and $n(k\delta-\alpha, \beta)=-n(\alpha, \beta)=n(\alpha, m\delta-\beta)$ for $k,m\in\mathbb{N}^*$.
\smallskip

 We describe now the covering relations inside the sets $\Sigma_{\mo,k}$. Let $\alpha \in \Phi_0^+$ and $k \in \mathbb{N}$.

\begin{enumerate}
\item[(1)]
There are no covering relations of the form
\[
(k+1)\delta - \alpha \lhd s_{\delta-\omega}\big((k+1)\delta - \alpha\big).
\]
Indeed, if such a covering existed, then
$0 > B\big((k+1)\delta - \alpha,\, \delta - \omega\big) = B(\alpha,\omega)$,
so that $n(\alpha,\omega) = -1$. It follows that
\[
s_{\delta-\omega}(\alpha)
= \alpha - n(\alpha,\delta-\omega)(\delta-\omega)
= \alpha + n(\alpha,\omega)(\delta-\omega)
= \alpha - \delta + \omega.
\]
Thus, $s_{\delta-\omega}\big((k+1)\delta - \alpha\big)
= (k+2)\delta - (\alpha + \omega)
\notin \Sigma_{\mo,k},$ a contradiction.
\smallskip

\item[(2)]
All covering relations in $\Sigma_{\mo,k}$ are of one of the following three types:
\begin{itemize}
\item[(i)]
$k\delta + \alpha \lhd s(k\delta + \alpha) = k\delta + s(\alpha),
\qquad \text{when } B(\alpha,\alpha_s) < 0;
$

\item[(ii)]
$
(k+1)\delta - \alpha \lhd s\big((k+1)\delta - \alpha\big)
= (k+1)\delta - s(\alpha),
$
when $B(\alpha,\alpha_s) > 0$ and $\alpha \neq \alpha_s$.
Note that if $\alpha = \alpha_s$, then
$(k+1)\delta - s(\alpha) = (k+1)\delta + \alpha_s \in \Sigma_{\mo,k+1}$.

\item[(iii)]
$
k\delta + \alpha \lhd s_{\delta-\omega}(k\delta + \alpha)
= k\delta + s_{\delta-\omega}(\alpha),
$
when $B(\alpha,\delta-\omega) < 0$ and $\alpha \neq \omega$.
Otherwise, $s_{\delta-\omega}(k\delta + \omega) = (k+2)\delta - \omega
\in \Sigma_{\mo,k+1}.$
\end{itemize}
\end{enumerate}

We state now the second symmetry of the root poset.

\begin{Theorem}\label{th:iso2} Let $\mo$ be a $W_0$-orbit. There is a poset isomorphism $$(\Sigma_{\mo,k}, \leq) \cong (\Sigma_{\mo}, \leq)$$ for all $k\in\mathbb{N}$. Moreover, all covering in $\Sigma_{\mo,k}$ are short.
\end{Theorem}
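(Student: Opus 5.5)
The natural candidate is the translation map
$$\psi_k:\Sigma_{\mo}\to\Sigma_{\mo,k},\qquad \alpha\mapsto k\delta+\alpha,\quad \delta-\alpha\mapsto (k+1)\delta-\alpha\qquad(\alpha\in\mo^+),$$
that is, $\psi_k(\beta)=\beta+k\delta$. It is a bijection by the definitions of $\Sigma_{\mo}$ and $\Sigma_{\mo,k}$. Since both posets are finite and the order on each is generated by its covering relations, it suffices to show that $\psi_k$ induces a bijection between coverings in $\Sigma_{\mo}$ and coverings in $\Sigma_{\mo,k}$. More precisely, I will show that $\beta\lhd s(\beta)$ with both roots in $\Sigma_{\mo}$ if and only if $\psi_k(\beta)\lhd s(\psi_k(\beta))$ with both roots in $\Sigma_{\mo,k}$, and that $s(\psi_k(\beta))=\psi_k(s(\beta))$.

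\emph{Key identities.} Since $\delta$ lies in the radical of $B$ and $s(\delta)=\delta$ for every $s\in S$, we have, for every $\beta\in\Phi$ and $s\in S$,
$$B(\beta+k\delta,\alpha_s)=B(\beta,\alpha_s),\qquad s(\beta+k\delta)=s(\beta)+k\delta.$$
The second identity gives the equivariance $s(\psi_k(\beta))=\psi_k(s(\beta))$ whenever $s(\beta)\in\Sigma_{\mo}$. The first identity shows that the sign of $B(\cdot,\alpha_s)$, which decides whether we have a covering (Proposition~\ref{prop:BasicDepth}(iii)), is unchanged. It also shows that the norm condition distinguishing short from long edges is unchanged, because $\|\beta+k\delta\|=\|\beta\|$, as $\delta$ is isotropic and orthogonal to everything.

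\emph{Membership in the block.} It remains to check that a covering inside $\Sigma_{\mo}$ translates to a covering inside $\Sigma_{\mo,k}$, and conversely. I will use the case list (1)--(2) just established, which holds uniformly in $k$. Coverings internal to $\Sigma_{\mo,k}$ are exactly those of types (i), (ii), (iii), and each condition there depends only on $\alpha$ and $s$, not on $k$. Similarly, the excluded cases are independent of $k$: these are $\alpha=\alpha_s$ in (ii), $\alpha=\omega$ in (iii), and the non-existence statement (1). So the coverings in $\Sigma_{\mo,k}$ are exactly the images under $\psi_k$ of the coverings in $\Sigma_{\mo,0}=\Sigma_{\mo}$. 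The one point to watch is that a reflection could send $\psi_k(\beta)$ out of $\Sigma_{\mo,k}$ while sending $\beta$ inside $\Sigma_{\mo}$, or the reverse. However, $s$ changes the $\delta$-coefficient by the same amount in both cases, by the equivariance identity, so this cannot happen. This bookkeeping is the main obstacle; it amounts to verifying carefully that the exceptional cases in (2)(ii),(iii) and (1) match at every level $k$.

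\emph{Shortness.} By Theorem~\ref{th:iso1}, all coverings within $\Sigma_{\mo}=\mo^+\sqcup(\delta-\mo^+)$ are short. More generally, all coverings in the order ideal $\Sigma$ are short, and this includes the coverings between $\mo^+$ and $\delta-\mo^+$. Since $B(\cdot,\alpha_s)$ and the norms are invariant under translation by $k\delta$, every covering in $\Sigma_{\mo,k}$ is short as well. Together with the previous step, this shows that $\psi_k$ is an order isomorphism $(\Sigma_{\mo},\leq)\cong(\Sigma_{\mo,k},\leq)$ and that all coverings in $\Sigma_{\mo,k}$ are short.
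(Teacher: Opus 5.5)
Your proof is correct and is essentially the paper's argument: you use the same translation bijection ($\psi_k=\pi_k^{-1}$) and the invariance of $B(\cdot,\alpha_s)$ and of norms under $\beta\mapsto\beta+k\delta$, both to match coverings and to transfer shortness from $\Sigma_{\mo}$. Your observation that $\Sigma_{\mo,k}=\Sigma_{\mo}+k\delta$ together with $s(\beta+k\delta)=s(\beta)+k\delta$ in fact makes the paper's case list (i)--(iii) unnecessary. Like the paper, you tacitly identify the covering relations of the induced subposet $\Sigma_{\mo,k}$ with root-poset coverings $\beta\lhd s(\beta)$ lying inside the block. This identification is justified: any saturated chain between two elements of $\Sigma_{\mo,k}$ stays in one $W$-orbit, and $\dep_\infty$ is nondecreasing along covers (Proposition~\ref{prop:inf-depth}), so $\dep_\infty$ is constantly $k$ along the chain and the chain never leaves $\Sigma_{\mo,k}$.
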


\begin{proof}
The map $\pi_k : \Sigma_{\mo,k} \longrightarrow \Sigma_{\mo}$ defined by
\[
\pi_k(k\delta+\alpha)=\alpha,
\qquad
\pi_k((k+1)\delta-\alpha)=\delta-\alpha,
\]
for $\alpha\in\Phi_0^+$ is a bijection. By the discussion preceding the theorem, we have three types of coverings in $\Sigma_{\mo,k}$.
\smallskip

A covering $k\delta+\alpha \lhd k\delta+s(\alpha)$ of type (i) occurs if and only if
$0>B(k\delta+\alpha,\alpha_s)=B(\alpha,\alpha_s)$,
which is equivalent to $\alpha \lhd s(\alpha)$ being a covering in $\Sigma_{\mo}$. 
\smallskip

A covering $(k+1)\delta-\alpha \lhd (k+1)\delta-s(\alpha)$ of type (ii), occurs if and only if
$ 0>B((k+1)\delta-\alpha,\alpha_s)=-B(\alpha,\alpha_s)$,
with $\alpha\neq\alpha_s$, which is equivalent to
$\delta-\alpha \lhd \delta-s(\alpha)$
being a covering in $\Sigma_{\mo}$.
\smallskip

A covering $k\delta+\alpha \lhd s_{\delta-\omega}(k\delta+\alpha)$ of type (iii)
occurs if and only if $0>B(k\delta+\alpha,\delta-\omega)=B(\alpha,\delta-\omega)$, with $\alpha\neq\omega$, which is equivalent to $\alpha \lhd s_{\delta-\omega}(\alpha)
$ being a covering in $\Sigma_{\mo}$. 
\smallskip

Thus $\pi_k$ preserves and reflects covering relations and is therefore a poset isomorphism. The final statement follows from Theorem~\ref{th:iso1} since all covering are short in $\Sigma_\mo$.
\end{proof}

\begin{Example} In the Figure~\ref{fig:root-poset-b3-iso2} below, the the first three slices of the Hasse diagram of the root poset of type $\widetilde{B}_3$ are represented. The first slice coincides with the poset shown in Figure \ref{fig:root-poset-b3}, although the order relations are drawn differently. This alternative depiction makes the isomorphisms described in Theorem \ref{th:iso2} more apparent.
\end{Example}

\begin{figure}[hbtp]
\centering
\includegraphics[scale=.7]{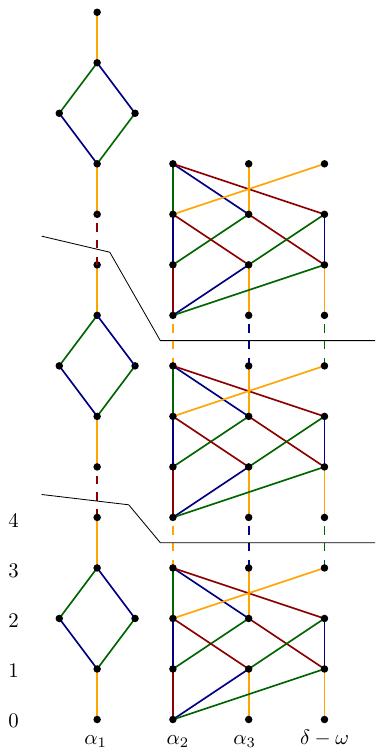}
\caption{}
\label{fig:root-poset-b3-iso2}
\end{figure}

We end this discussion by showing that a covering $\alpha\lhd \beta$ with $\alpha\in \Sigma_{\mo,k}$ and $\beta\in \Sigma_{\mo,k+1}$ is long.

\begin{Proposition}[Covering relations]\label{prop:long-cover}\
\begin{itemize}
    \item[(i)]  For all $k\in\mathbb{N}, s\in S_0$, $(k+1)\delta-\alpha_s\lhd s((k+1)\delta - \alpha_s)$ is a long covering, where $(k+1)\delta-\alpha_s\in \Sigma_{\mo,k}$ and $s((k+1)\delta - \alpha_s)\in \Sigma_{\mo, k+1}$.
    \item[(ii)] For all $k\in\mathbb{N}$, $k\delta +\omega\lhd s_{\delta-\omega}(k\delta + \omega)$ is a long covering, where $k\delta +\omega\in \Sigma_{\mo,k}$ and $s_{\delta-\omega}(k\delta + \omega)\in \Sigma_{\mo,k+1}$. 
\end{itemize}
   
\end{Proposition}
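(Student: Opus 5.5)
Both items come down to a direct computation of $B(\gamma,\alpha_s)$ for the root $\gamma$ at the bottom of the covering. This number is then compared with $-\|\gamma\|\|\alpha_s\|$, the threshold in the crystallographic definition of long coverings. We use that $\delta$ lies in the radical of $B$, so $B(k\delta\pm\alpha,\beta)=\pm B(\alpha,\beta)$ for $\beta\in\Phi_0\cup\{\delta-\omega\}$. We also use that $\|k\delta\pm\alpha\|=\|\alpha\|$, since $B(k\delta\pm\alpha,k\delta\pm\alpha)=B(\alpha,\alpha)$.

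For (i), fix $s\in S_0$ and set $\gamma=(k+1)\delta-\alpha_s$. First, $\gamma$ lies in $\Sigma_{\mo,k}$ for the orbit $\mo$ containing $\alpha_s$, by the definition of $\Sigma_{\mo,k}$. Then
$$
s(\gamma)=(k+1)\delta+\alpha_s\in\Sigma_{\mo,k+1},
$$
as already noted in case (2)(ii) of the discussion before Theorem~\ref{th:iso2}. Next, $B(\gamma,\alpha_s)=-B(\alpha_s,\alpha_s)=-\|\alpha_s\|^2<0$. By Proposition~\ref{prop:BasicDepth}(iii), $\gamma\lhd s(\gamma)$ is therefore a covering. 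Finally, $\|\gamma\|\|\alpha_s\|=\|\alpha_s\|^2$, so $B(\gamma,\alpha_s)=-\|\gamma\|\|\alpha_s\|$. This is not strictly greater than $-\|\gamma\|\|\alpha_s\|$, so the covering is not short, hence it is long.

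For (ii), set $\gamma=k\delta+\omega\in\Sigma_{\mo_1,k}$, recalling that $\omega\in\mo_1^+$ by convention. Here $B(\gamma,\delta-\omega)=-B(\omega,\omega)=-\|\omega\|^2$. So
$$
s_{\delta-\omega}(\gamma)=\gamma-n(\gamma,\delta-\omega)(\delta-\omega)=\gamma+2(\delta-\omega)=(k+2)\delta-\omega,
$$
since $n(\gamma,\delta-\omega)=2B(\gamma,\delta-\omega)/B(\delta-\omega,\delta-\omega)=-2$. This root equals $((k+1)+1)\delta-\omega$, so it lies in $\Sigma_{\mo_1,k+1}$, matching case (2)(iii). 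The covering holds because $B<0$. It is long because $\|\delta-\omega\|=\|\omega\|=\|\gamma\|$, which gives $B(\gamma,\alpha_{\delta-\omega})=-\|\gamma\|\|\alpha_{\delta-\omega}\|$, again on the boundary value.

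The only delicate point is bookkeeping. We must check that the orbit $\mo$ in (ii) is the one containing $\omega$, and that each image lands in the index $k+1$ rather than being a covering inside $\Sigma_{\mo,k}$. Both checks reduce to identifying the coefficient of $\delta$ as above.
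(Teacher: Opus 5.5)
Your proof is correct and follows the paper's argument. You compute $B((k+1)\delta-\alpha_s,\alpha_s)=-\|\alpha_s\|^2$ and $B(k\delta+\omega,\delta-\omega)=-\|\omega\|^2$, observe (since $\delta$ spans the radical) that these values sit exactly on the long threshold $-\|\gamma\|\|\alpha_s\|$, and identify the images $(k+1)\delta+\alpha_s$ and $(k+2)\delta-\omega$ in $\Sigma_{\mo,k+1}$. You are slightly more explicit than the paper about the norm equalities, the value $n(\gamma,\delta-\omega)=-2$, and the fact that in (ii) the relevant orbit is $\mo_1$.
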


\begin{proof}
    Consider the  $(k+1)\delta-\alpha_s\lhd s((k+1)\delta - \alpha_s)$, then $B((k+1)\delta-\alpha_s, \alpha_s)=-\|\alpha_s\|^2$, hence the covering is long, and $s((k+1)\delta - \alpha_s)=(k+1)\delta+\alpha_s \in \Sigma_{\mo,k+1}$. Analogously, if we consider the covering $k\delta +\omega\lhd s_{\delta-\omega}(k\delta + \omega)$, $B(k\delta +\omega, \delta-\omega)=-\|\omega\|^2$, so the covering is long. In particular, $s_{\delta -\omega}(k\delta +\omega)=(k+2)\delta -\omega \in \Sigma_{\mo,k+1}$. 
    \end{proof}

\subsection{An explicit formula for the generating function of depth} We are ready now to provide a closed formula for the generating function of the depth of positive roots in $\Phi$, using the restriction of the root poset to the sets $\Sigma_\mo$ and $\Phi_\mo$ introduced in the previous section.
\medskip

Let $\mo \subseteq \Phi_0$ be $W_0$-orbit. Define 
$$
M_{\mo}=\max\{\dep(\beta) \mid \beta \in \Sigma_{\mo}\}.
$$
We show that the depth of a positive root depends only on $M_{\mo}$ and on
the depth of a root in $\mo^+$.

\begin{Corollary}\label{cor:maxdep}
    Let $\alpha\in \mo^+$ and $k\in \mathbb{N}$, then :
    \begin{enumerate}
        \item $\dep(k\delta + \alpha)=\dep(\alpha)+k(M_\mo +1)$;
        \item $\dep((k+1)\delta -\alpha) = \dep(\delta-\alpha) +k(M_\mo +1)$.
    \end{enumerate}  
\end{Corollary}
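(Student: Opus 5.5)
The plan is to argue by induction on $k$ and to read depth as the rank function of the graded root poset (Proposition~\ref{prop:BasicDepth}). Fix the orbit $\mo$. Since $\Phi^+=\sqcup_i\Phi_{\mo_i}$ and simple reflections preserve $W_0$-orbits (they preserve $\alpha\mapsto$ its $\Phi_0$-part up to sign, which stays in $\mo$), every saturated chain ending in $\Phi_\mo$ stays inside $\Phi_\mo$. Such a chain therefore starts at a simple root in $\Sigma_\mo$, and depth can be computed inside $(\Phi_\mo,\leq)$. The case $k=0$ of (1) is trivial. The case $k=0$ of (2) is also trivial, since the formula then reads $\dep(\delta-\alpha)=\dep(\delta-\alpha)$. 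So the task is to show that passing from slice $\Sigma_{\mo,k}$ to slice $\Sigma_{\mo,k+1}$ adds exactly $M_\mo+1$ to the depth, and to do it in a way that is compatible with the isomorphism $\pi_k$ of Theorem~\ref{th:iso2}.

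First I would pin down how the slices are connected. By the discussion preceding Theorem~\ref{th:iso2} and by Proposition~\ref{prop:long-cover}, every covering leaving $\Sigma_{\mo,k}$ for a higher slice lands in $\Sigma_{\mo,k+1}$ and is long. The only such coverings are $(k+1)\delta-\alpha_s\lhd(k+1)\delta+\alpha_s$ and $k\delta+\omega\lhd(k+2)\delta-\omega$; the second occurs only when $\omega\in\mo$. I would also check that no covering goes downward, from $\Sigma_{\mo,k+1}$ into $\Sigma_{\mo,k}$. This follows by applying $s$ to elements of $\Sigma_{\mo,k+1}$ and using Lemma~\ref{lem:depth}: a covering always increases depth, and the type-(1) exclusion rules out the remaining possibility.

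Next I would identify the minimal and maximal elements of each slice. Under $\pi_k$ and Theorem~\ref{th:iso1}, the minimal elements of $\Sigma_{\mo,k}$ correspond to the simple roots in $\mo^+$, and the maximal elements correspond to $\delta-\alpha_s$ for $\alpha_s\in\mo$ simple. The element $\omega$ is maximal in $\mo^+$ but lies below $\delta-\omega$-type elements. The key claim is that every element $\delta-\alpha_s$ with $\alpha_s\in\mo$ simple, and $\omega$ if $\omega\in\mo$, has depth exactly $M_\mo$, or at least that the elements entering the long coverings all have depth $M_\mo$. For this I would use the anti-isomorphism $\phi$ of Theorem~\ref{th:iso1} together with gradedness: $\dep$ is constant on the minimal elements (value $0$), and $\Sigma_\mo$ is graded. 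Since $\phi$ reverses order, $\mo^+$ and $\delta-\mo^+$ are glued so that the maximal elements $\delta-\alpha_s$ all sit at the top rank $M_\mo$. I expect this to be the main obstacle, because a finite graded poset need not have all of its maximal elements at top rank. To get it, I would argue through the affine simple root $\delta-\omega$: it is minimal in $\Sigma_\mo$ if $\omega\in\mo$, and $\phi$ sends $\omega$ (rank $\dep(\omega)$) to $\delta-\omega$ (rank $0$). Combining $\dep(\delta-\alpha)+\dep(\alpha)=\text{const}$ along $\phi$ with the covering types shows that $\dep(\alpha)+\dep(\delta-\alpha)=M_\mo$ for all $\alpha\in\mo^+$. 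This is a symmetric-chain statement, which I would prove by induction on $\dep(\alpha)$ using the covering types.

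Granting this relation, the induction closes. The minimal elements of $\Sigma_{\mo,k+1}$ are $(k+1)\delta+\alpha_s$, and, when $\omega\in\mo$, also $(k+2)\delta-\omega$. Each is reached by a single long covering from an element of slice $k$ whose depth is $\dep(\delta-\alpha_s)+k(M_\mo+1)=M_\mo+k(M_\mo+1)$ by the induction hypothesis. Hence each has depth $(k+1)(M_\mo+1)$, which matches $\dep(\alpha_s)+(k+1)(M_\mo+1)$ with $\dep(\alpha_s)=0$. Because $\pi_{k+1}$ is an isomorphism onto the graded poset $\Sigma_\mo$, and all entries into slice $k+1$ happen at these minimal elements, depth on $\Sigma_{\mo,k+1}$ equals the depth of the $\pi_{k+1}$-image plus $(k+1)(M_\mo+1)$. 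This gives both (1) and (2).
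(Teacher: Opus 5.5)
Your proposal is correct and follows essentially the paper's route: the depth on each slice $\Sigma_{\mo,k}$ is the depth on $\Sigma_\mo$ transported by $\pi_k$ and shifted by the depth of the slice's minimal elements $k\delta+\alpha_s$ and $(k+1)\delta-\omega$, which are reached by long covers from the previous slice. You also supply a step the paper leaves implicit, namely that the lower ends of those long covers have depth exactly $M_\mo$ (via $\dep(\alpha)+\dep(\delta-\alpha)=M_\mo$), and this step can be completed as follows: the sum is invariant along covers inside $\mo^+$ by Theorem~\ref{th:iso1}, and $\mo^+$ is connected because it has a unique maximal element (the dominant root of $\mo$), so the sum is a constant, which equals $M_\mo$ since it bounds every depth in $\Sigma_\mo$ and is attained at $\delta-\alpha_s$ --- this also covers the short-root orbit, where your $\delta-\omega$ anchor is unavailable.
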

\begin{proof}
    By Theorem \ref{th:iso2}, the lowest depth in $\Sigma_{\mo, k}$ is $k(M_\mo +1)$.
    It suffices to show that the only roots in $\Sigma_{\mo,k}$ that have depth equal to $k(M_\mo +1)$ are $$\{k\delta + \alpha_s, (k+1)\delta - \omega \mid s\in S\}.$$ 
    Note that $B(k\delta+\alpha_s, \alpha_s)=B(\alpha_s,\alpha_s)>0$, and $s(k\delta+\alpha_s)=k\delta-\alpha_s\in \Sigma_{\mo, k-1}$. So we have the covering $k\delta-\alpha_s\lhd k\delta+\alpha_s$. Analogously, we also have the covering $k\delta-\omega \lhd (k+1)\delta -\omega$, where $k\delta-\omega\in \Sigma_{\mo,k-1}$. Hence $\{k\delta + \alpha_s, (k+1)\delta - \omega \mid s\in S\}$ are the roots with lowest depth in $\Sigma_{\mo,k}$ and there are no more roots with this property.
\end{proof}

We denote the generating functions of the depth over the whole set $\Phi^+$, and its restriction to $\Sigma_\mo$ and $\Phi_\mo$ respectively, by: 
\begin{equation}\label{def:depth-polys}
\Phi^+(q)=\sum_{\beta\in \Phi^+} q^{\dep(\beta)}, \quad  P_{\mo}(q):=\sum_{\beta\in \Sigma_\mo} q^{\dep(\beta)},\quad \mbox{ and }\quad \Phi_{\mo}(q):=\sum_{\beta\in \Phi_{\mo}} q^{\dep(\beta)}.
\end{equation} 

\begin{Proposition} \label{prop:palindrom}
The polynomial $P_{\mo}(q)$ is palindromic of degree $M_\mo$, and the generating function for the depth of roots in $\Phi_{\mo}$ is given by:
\begin{equation}
    \Phi_{\mo}(q) = \frac{P_{\mo}(q)}{1 - q^{M_{\mo}+1}}.
\end{equation}
\end{Proposition}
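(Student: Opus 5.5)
The proof splits into two parts: palindromicity of $P_{\mo}(q)$, and the rational expression for $\Phi_{\mo}(q)$.

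\textbf{Step 1: palindromicity of $P_{\mo}(q)$.}
We write $\Sigma_{\mo}=\mo^+\sqcup(\delta-\mo^+)$, so that $P_{\mo}(q)=\sum_{\alpha\in\mo^+}q^{\dep(\alpha)}+\sum_{\alpha\in\mo^+}q^{\dep(\delta-\alpha)}$.

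By Theorem~\ref{th:iso1}, the map $\alpha\mapsto\delta-\alpha$ is an order-reversing bijection from $\mo^+$ onto $\delta-\mo^+$. We use this together with the fact that $\Sigma_{\mo}$ is graded by $\dep$ (Proposition~\ref{prop:BasicDepth}). The key claim is that
$$\dep(\alpha)+\dep(\delta-\alpha)=M_{\mo}\quad\text{for all }\alpha\in\mo^+.$$
This claim immediately gives $q^{M_{\mo}}P_{\mo}(q^{-1})=P_{\mo}(q)$, since the involution $\beta\mapsto\delta-\beta$ of $\Sigma_{\mo}$ (swapping the two halves) sends depth $d$ to depth $M_{\mo}-d$. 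It also shows that the degree is exactly $M_{\mo}$, because the constant term is nonzero.

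To prove the claim, we proceed as follows.
\begin{enumerate}
\item Minimal elements. The minimal elements of $\Sigma_{\mo}$ are the simple roots lying in $\Sigma_{\mo}$. These are the $\alpha_s\in\mo^+$ with $s\in S_0$, together with $\delta-\omega$ when $\omega\in\mo$. Recall that $\mo=\mo_1$ contains $\omega$, while for $\mo_2$ the simple root $\delta-\omega$ lies in $\Sigma_{\mo_1}$.
\item Maximal elements. Via $\phi$, the maximal elements of $\mo^+$ correspond to the minimal elements of $\delta-\mo^+$, and conversely.
\item Equal depths of the extremes. A saturated chain from a simple root $\alpha_s$ up to a root $\alpha\in\mo^+$ uses only covers inside $\mo^+$ or through $\delta-\omega$. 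Since covers inside $\Sigma_{\mo}$ all change depth by exactly $1$, it suffices to show that all maximal elements of $\mo^+$ have the same depth, say $d_1$, and that all maximal elements of $\delta-\mo^+$ have the same depth $M_{\mo}$.
\item Connecting the halves. By type (ii) covers, the simple roots $\alpha_s$ of $\mo^+$ are covered via $\delta-\alpha_s$ only from below, in the sense that $\delta-\alpha_s\lhd\delta+\alpha_s$ leaves $\Sigma_{\mo}$. Thus $\delta-\alpha_s$ is maximal in $\Sigma_{\mo}$. Dually, $\omega$ is maximal in $\mo^+$ and $\delta-\omega$ is minimal.
\end{enumerate}

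For the depth computation itself, $\dep(\delta-\alpha_s)$ can be read off from Corollary~\ref{cor:maxdep}: the roots $\delta+\alpha_s$ and $2\delta-\omega$ all have depth $M_{\mo}+1$, hence each $\delta-\alpha_s$ has depth $M_{\mo}$. Combining this with the order-reversal of $\phi$ and the gradedness along chains, we get
$$\dep(\delta-\alpha)=M_{\mo}-\dep(\alpha).$$
Concretely, we take a saturated chain from $\alpha_s$ to $\alpha$ in $\mo^+$; its $\phi$-image is a saturated chain from $\delta-\alpha$ up to $\delta-\alpha_s$, of the same length $\dep(\alpha)$. The case $\mo=\mo_1$ is handled the same way, using chains that start at $\delta-\omega$ and end at $\omega$.

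\textbf{Step 2: the generating function of $\Phi_{\mo}$.}
We use $\Phi_{\mo}=\bigsqcup_k\Sigma_{\mo,k}$ together with Corollary~\ref{cor:maxdep}, which gives
$$\sum_{\beta\in\Sigma_{\mo,k}}q^{\dep(\beta)}=q^{k(M_{\mo}+1)}P_{\mo}(q).$$
Summing this geometric series over $k\in\mathbb N$ yields
$$\Phi_{\mo}(q)=\frac{P_{\mo}(q)}{1-q^{M_{\mo}+1}}.$$

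\textbf{Main obstacle.}
The hardest point is showing that every minimal element of $\Sigma_{\mo}$ has depth $0$ and every maximal element has depth $M_{\mo}$, that is, that $\Sigma_{\mo}$ is "bounded-graded" so that the complementary-depth identity holds. I would first try to derive this from Corollary~\ref{cor:maxdep}: it identifies the unique lowest-depth layer of $\Sigma_{\mo,1}$, which forces the elements covered by it, namely the $\delta-\alpha_s$ and $\omega$, to be exactly the top layer of $\Sigma_{\mo}$.
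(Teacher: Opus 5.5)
Your proof is correct and is essentially the paper's: the identity $\dep(\alpha)+\dep(\delta-\alpha)=M_{\mo}$ for $\alpha\in\mo^+$ gives palindromicity, and Corollary~\ref{cor:maxdep} gives the geometric series. The only difference is that the paper asserts this identity outright, while you justify it by transporting a saturated chain $\alpha_s\lhd\cdots\lhd\alpha$ of $\mo^+$ through Theorem~\ref{th:iso1} and anchoring $\dep(\delta-\alpha_s)=M_{\mo}$ with Corollary~\ref{cor:maxdep}. (That corollary's own proof takes this anchor for granted; to avoid it, run the same chain argument up to the unique maximal element $\theta$ of $\mo^+$, its dominant root, which gives $\dep(\alpha)+\dep(\delta-\alpha)=\dep(\theta)+\dep(\delta-\theta)$ for every $\alpha\in\mo^+$, a constant that is then necessarily $M_{\mo}$.)
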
 
\begin{proof}
First of all we observe that by the definitions of $\Sigma_{\mo}$ and $M_{\mo}$, the maximum degree of the terms in the sum defining $P_{\mo}(q)$ is exactly $M_{\mo}$. More precisely, we have
$$P_{\mo}(q)=\sum_{\beta\in \mo^+} (q^{\dep(\beta)}+q^{M_\mo-\dep(\beta)}).$$ 
Since 
\begin{align*}
q^{M_\mo}P_\mo(q^{-1}) &= q^{M_\mo} \sum_{\beta\in \mo^+} (q^{-\dep(\beta)} + q^{-(M_\mo-\dep(\beta))}) \\
&= \sum_{\beta\in \mo^+} (q^{M_\mo-\dep(\beta)} + q^{\dep(\beta)}) = P_\mo(q),
\end{align*}
the polynomial $P_{\mo}(q)$ is palindromic. 

By Corollary \ref{cor:maxdep}, the depth of each root in the $k$-th slice is shifted by $k(M_{\mo}+1)$, namely 
$$\sum_{\beta\in\Sigma_{\mo,k}} q^{\dep(\beta)}=q^{k(M_\mo +1)}P_\mo(q).$$
Summing these contributions yields the geometric series:
\[
\Phi_{\mo}(q) = \sum_{k\in\mathbb{N}}\sum_{\beta\in\Sigma_{\mo,k}} q^{\dep(\beta)}=\sum_{k=0}^{\infty} q^{k(M_{\mo}+1)} P_{\mo}(q) = \frac{P_{\mo}(q)}{1 - q^{M_{\mo}+1}},
\]
and this completes the proof.
\end{proof}

By combining the polynomials associated to each orbit, we obtain our final result.

\def\lcm{{\operatorname{lcm}}}

\begin{Theorem}\label{th:main}
    The generating function of the depth of positive roots is of the form
    $$\Phi^+(q)=\frac{P(q)}{1-q^M}$$
    where $M=\lcm(M_\mo+1\mid \mo\ \mbox{ $W_0$-orbit})$ and $P$ is palindromic.
\end{Theorem}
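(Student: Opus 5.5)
We sum over the (at most two) $W_0$-orbits and use Proposition~\ref{prop:palindrom}. Since $\Phi^+=\bigsqcup_{i=1}^h\Phi_{\mo_i}$ with $h\in\{1,2\}$, we get
$$
\Phi^+(q)=\sum_{i=1}^h \Phi_{\mo_i}(q)=\sum_{i=1}^h \frac{P_{\mo_i}(q)}{1-q^{M_{\mo_i}+1}}.
$$
Put $N_i=M_{\mo_i}+1$ and $M=\lcm(N_1,\dots,N_h)$. Because $N_i\mid M$, we can multiply each summand by $\frac{1+q^{N_i}+\cdots+q^{M-N_i}}{1+q^{N_i}+\cdots+q^{M-N_i}}$, which uses $1-q^M=(1-q^{N_i})(1+q^{N_i}+\cdots+q^{M-N_i})$. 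This gives $\Phi^+(q)=\frac{P(q)}{1-q^M}$ with
$$
P(q)=\sum_{i=1}^h P_{\mo_i}(q)\,R_i(q),\qquad R_i(q)=\frac{1-q^M}{1-q^{N_i}}=\sum_{j=0}^{M/N_i-1}q^{jN_i}.
$$
In the simply laced case $h=1$, so $P=P_{\mo_1}$, $M=M_{\mo_1}+1$, and we are done immediately.

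Next we show $P$ is palindromic, using the following degree bookkeeping. A polynomial $f$ is palindromic of degree $d$ when $q^df(q^{-1})=f(q)$. The property is multiplicative: if $f$ and $g$ are palindromic of degrees $d$ and $e$, then $fg$ is palindromic of degree $d+e$. The factor $R_i$ is palindromic of degree $M-N_i$. By Proposition~\ref{prop:palindrom}, $P_{\mo_i}$ is palindromic of degree $M_{\mo_i}=N_i-1$. So each product $P_{\mo_i}R_i$ is palindromic of degree $(N_i-1)+(M-N_i)=M-1$. This degree does not depend on $i$, and the sum of polynomials palindromic with respect to the same degree is again palindromic with respect to that degree. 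Therefore $P$ is palindromic of degree $M-1$.

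The one delicate point is the convention: ``palindromic'' must be understood relative to a fixed degree ($M-1$ here), since summands palindromic of different degrees would not combine. The computation above shows the centre of symmetry $(M-1)/2$ is the same for all orbits, which resolves this. It is also worth checking that each $P_{\mo_i}$ really has the stated degree $M_{\mo_i}$. This holds because the formula $P_\mo(q)=\sum_{\beta\in\mo^+}(q^{\dep(\beta)}+q^{M_\mo-\dep(\beta)})$ includes the simple roots of $\mo^+$, which have depth $0$ and so contribute the term $q^{M_\mo}$. Finally, Theorem~\ref{thm:Main1} follows by substituting $T(q)=q\,\Phi^+(q^2)$.
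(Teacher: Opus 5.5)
Your proof is correct and follows essentially the same argument as the paper. The paper likewise writes $\Phi^+(q)=\sum_i \Phi_{\mo_i}(q)$, clears denominators with $D_i(q)=\frac{1-q^M}{1-q^{M_i+1}}$ (your $R_i$), and deduces that $P=\sum_i D_iP_i$ is palindromic because each $D_iP_i$ is a product of palindromic polynomials of common degree $M-1$, which is exactly your point about symmetry relative to a fixed degree.
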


\begin{proof}
    By construction, $\Phi^+(q)=\sum_{i=1}^{h}\Phi_{\mo_i}(q)$, where $h=1,2$ is the number of $W_0$-orbits. For simplicity, set $P_{\mo_i}=P_i$ and $M_{\mo_i}=M_i$ for all $i$, and let $M=\lcm(M_{i}+1)_{i=1}^h$. Then we may rewrite $\Phi^+(q)$ as $$\Phi^+(q)=\frac{\sum_{i=1}^k\frac{1-q^M}{1-q^{M_i+1}}P_i(q)}{1-q^M}=\frac{P(q)}{1-q^M}.$$
    Now we want to show that $P(q)$ is palindromic. Denote by $D_i(q)=\frac{1-q^M}{1-q^{M_i+1}}$ for every $i$. It is not difficult to see that $D_i(q)=\sum_{j=1}^{M/(M_i+1)}q^{M-j(M_i+1)}$ is a palindromic polynomial of $\deg(D_i(q))=M-M_i-1$. In particular, $\deg(D_i(q)P_i(q))=M-1$ for all $i$, so $\deg(P(q))=M-1$. We have 
    \begin{align*}  
    q^{M-1}P(q^{-1})=&\sum_{i=1}^k q^{M-1}D_i(q^{-1})P_i(q^{-1})\\
    =&\sum_{i=1}^k q^{M-M_i-1}D_i(q^{-1})q^{M_i}P_i(q^{-1})\\
    =&\sum_{i=1}^k D_i(q)P_i(q)=P(q)
    \end{align*}
    where the third equality holds since $D_i(q)$ and $P_i(q)$ are palindromic for all $i$ (Proposition \ref{prop:palindrom}). Therefore, $P(q)$ is palindromic.
\end{proof}

\begin{Example}\label{ex:poly-b3}
    Consider $(W,S)$ of type $\widetilde{B}_3$.  The polynomials corresponding to the two orbits are: 
    $$
    P_{\mo_1}(q)= 1+q+2q^2+q^3+q^4 \ \mbox{and} \  P_{\mo_2}(q)=3+3q+3q^2+3q^3.
    $$
    Therefore $M_{\mo_1}=4$ and $M_{\mo_2}=3$. So from Proposition~\ref{prop:palindrom} we obtain:
    $$
    \Phi_{\mo_1} =\frac{1+q+2q^2+q^3+q^4}{1-q^5}\ \mbox{and} \ \Phi_{\mo_2} =\frac{3+3q+3q^2+3q^3}{1-q^4}=\frac{3}{1-q}.
    $$
     Following the notation in the proof of Theorem~\ref{th:main}, we obtain:
     $$
     D_1(q)=1+q^5+q^{10}+q^{15}\ \text{ and }\ D_2(q)=1+q^4+q^8+q^{12}+q^{16}.
     $$
     Since   $M=\lcm(5,4)=20$, the generating function of the depth of $\Phi^+$ is: 
    \begin{align*}
        \Phi^+(q)=& (4q^{19} + 4q^{18} + 5q^{17} + 4q^{16} + 4q^{15} + 4q^{14} + 4q^{13} + 5q^{12} + 4q^{11} + 4q^{10}\\& + 4q^9 + 4q^8 + 5q^7 + 4q^6 + 4q^5 + 4q^4 + 4q^3 + 5q^2 + 4q + 4)(1-q^{20})^{-1}.
    \end{align*}  
\end{Example}

\begin{Example}
    Consider $(W,S)$ of type $\widetilde{F}_4$. We obtain the Hasse diagram of root poset restricted to $\Sigma_{\mo_1}$ and $\Sigma_{\mo_2}$ using SageMath. The polynomials corresponding to the two orbits are: 
    \begin{eqnarray*}
        &P_{\mo_1}(q)= 3\sum_{i=0}^7 q^i\quad \mbox{and} &\\
        &  P_{\mo_2}(q)=2+2q+2q^2+3q^3+2q^4+2q^5+2q^6+3q^7+2q^8+2q^9+2q^{10}.&
    \end{eqnarray*}
    Therefore $M_{\mo_1}=7$ and $M_{\mo_2}=10$. So from Proposition~\ref{prop:palindrom} we obtain:
    $$
    \Phi_{\mo_1} =\frac{3}{1-q}\quad \mbox{and} \quad \Phi_{\mo_2} =\frac{P_{\mo_2}}{1-q^{11}}.
    $$
    Note that  $M=\lcm(11,8)=88$.
\end{Example}

\begin{Example}
    Consider $(W,S)$ of type $\widetilde{G}_2$. We obtain the Hasse diagram of root poset restricted on $\Sigma_{\mo_1}$ and $\Sigma_{\mo_2}$ using SageMath. The polynomials corresponding to the two orbits are: 
    $$
        P_{\mo_1}(q)= 2+2q+2q^2\ \mbox{and} \ 
          P_{\mo_2}(q)= 1+q+2q^2+q^3+q^4
    $$
    Therefore $M_{\mo_1}=2$ and $M_{\mo_2}=4$. So from Proposition~\ref{prop:palindrom} we obtain:
    $$
    \Phi_{\mo_1} =\frac{2}{1-q}\quad \mbox{and} \quad \Phi_{\mo_2} =\frac{1+q+2q^2+q^3+q^4}{1-q^{5}}.
    $$
    Note that  $M=\lcm(3,5)=15$.
\end{Example}

\subsection*{Example of classical types}

Let $n\in\mathbb N_{\geq 2}$.

\begin{Example}
    In type $\widetilde{A}_n$, since there is only one orbit, we obtain by direct computation that $M=n-1$, which is the height of the highest root minus $1$, and  
    $$
    P_{\mo}(q)=P(q)=(n+1)\sum_{i=0}^{n-1}q^i=(n+1)\frac{1-q^{n}}{1-q}.
    $$
    Therefore:
    $$
    \Phi^+(q)=\frac{n+1}{1-q}.
    $$
\end{Example}

\begin{Example}
    Consider $(W,S)$ of type $\widetilde{B}_n$. One shows that the polynomials corresponding to the two orbits are: 
    \begin{eqnarray*}
        &P_{\mo_1}(q)= \sum_{i=0}^{n-1} q^i\quad \mbox{and} &\\
        &  P_{\mo_2}(q)=\sum_{i=0}^{n-2}(n-1-i)\left(q^{2i}+q^{2i+1}\right).&
    \end{eqnarray*}
    Therefore $M_{\mo_1}=n-1$ and $M_{\mo_2}=2n-3$. So from Proposition~\ref{prop:palindrom} we obtain:
    $$
    \Phi_{\mo_1} =\frac{1}{1-q}\quad \mbox{and} \quad \Phi_{\mo_2} =\frac{P_{\mo_2}}{1-q^{2n-2}}.
    $$
\end{Example}

\begin{Example}
    Consider $(W,S)$ of type $\widetilde{C}_n$. One shows that the polynomials corresponding to the two orbits are: 
    \begin{eqnarray*}
        &P_{\mo_1}(q)= 2\sum_{i=0}^{n-1} q^i\quad \mbox{and} &\\
        &  P_{\mo_2}(q)= (n-1)q^{2n-2}+\sum_{i=0}^{n-2}(n-1)q^{2i}+nq^{2i+1}.&
    \end{eqnarray*}
    Therefore $M_{\mo_1}=n-1$ and $M_{\mo_2}=2n-2$. So from Proposition~\ref{prop:palindrom} we obtain:
    $$
    \Phi_{\mo_1} =\frac{2}{1-q}\quad \mbox{and} \quad \Phi_{\mo_2} =\frac{P_{\mo_2}}{1-q^{2n-1}}.
    $$
\end{Example}

\section{Open problems}\label{se:openproblems}

We propose here some natural problems arising from this article.



\begin{Problem}
    Is the language of palindromic reduced word (or lexicographically ordered palindromic reduced word) regular?
\end{Problem}

\begin{Problem}
    Characterize which Grassmannian elements (elements with only one right descent)  are reflection prefixes  (see~Proposition~\ref{prop:Pref1}). (Note that Grassmannian elements are also minimal representatives in right weak order for Nathan Reading's shards~\cite{Re11-1}.)
\end{Problem}

\begin{Problem} For indefinite Coxeter systems, i.e., not affine or finite, find closed formulas for the generating function $\Phi^+(q)$ and $T(q)=q\Phi^+(q^2)$.
\end{Problem}

\subsection*{Problem 4} In the case of affine types, we provide a combinatorial description  of the Hasse diagram of the root poset and recall that the dominance order is just the union of total orders. In the case of indefinite Coxeter systems, Dyer and the second authors initiated the study of {\em weak orders on roots}, which are a generalization of both orders, see~\cite[\S5]{DH}. A first fundamental difference between both orders is that the dominance order is not graded by the infinite-depth (whereas the root poset is graded by the depth). That led us to propose the following questions in the case of indefinite Coxeter systems.
{\em \begin{enumerate}
    \item Study the Hasse diagram of the root poset and its connected compoments obtained by removing the long edges.
    \item Study the formal power series that distinguished short and long covers in the root poset: $\sum_{k,l\in\mathbb{N}} a_{k,l}q^kt^l$, where $a_{k,l}$ is the number of saturated chains in the root poset with $k$ short edges and $l$ long edges. 
    \item Study the dominance order. 
    \item Study the generating function of infinite depth $\sum_{\beta\in\Phi^+} q^{\dep_{\infty}(\beta)}$.
    \item Study the formal power series $\sum_{k\in\mathbb{N}} b_kq^k$, where $b_k$ is the number of saturated chains in the dominance order of length $k$. (This problem is related to~\cite[Question~5]{DFHM24}).
\end{enumerate}
}

\bibliographystyle{plain}
\bibliography{biblio}
\end{document}